\algnewcommand{\linecomment}[1]{\Statex \hskip\ALG@thistlm \textcolor{blue}{/*#1*/}}
\def\verbatim@nolig@list{\do\`\do\'}% no comma
\newtheorem{theorem}{Theorem}[section]
\newtheorem*{theorem*}{Main Theorem}
\newtheorem{proposition}[theorem]{Proposition}
\newtheorem{corollary}[theorem]{Corollary}
\newtheorem{lemma}[theorem]{Lemma}
\theoremstyle{definition}
\newtheorem{definition}[theorem]{Definition}
\newtheorem{notation}[theorem]{Notation}
\newtheorem{remark}[theorem]{Remark}
\newcommand{\orcid}[1]{\href{https://orcid.org/#1}{\textcolor[HTML]{A6CE39}{\aiOrcid}}}
\newcommand{\B}{\mathcal{B}}
\newcommand{\Bs}{\mathcal{B}^{\text{sing}}}
\newcommand{\C}{\mathcal{C}}
\newcommand{\A}{\mathcal{A}}
\renewcommand{\O}{\mathcal{O}}
\renewcommand{\H}{\mathcal{H}}
\newcommand{\Aut}{\text{Aut}}
\newcommand{\PSp}{\text{PSp}}
\newcommand{\FF}{\mathbb{F}}
\newcommand{\ZZ}{\mathbb{Z}}
\let \H\HH
\newcommand{\CC}{\mathbb{C}}
\newcommand{\PP}{\mathbb{P}}
\newcommand{\cube}{\odot^3}
\newcommand{\diag}{\mathop{diag}}
\newcommand{\DFT}{M_{3,3}}
\definecolor{lightgreen}{rgb}{0.86, 0.93, 0.78}
\definecolor{bordergreen}{rgb}{0.55, 0.76, 0.74}
\definecolor{lightblue}{rgb}{0.70, 0.90, 0.99}
\definecolor{borderblue}{rgb}{0.01, 0.66, 0.96}
\definecolor{lightamber}{rgb}{1, 0.93, 0.70}
\definecolor{borderamber}{rgb}{1, 0.76, 0.03}
\definecolor{lightcolor4}{rgb}{ 0.93, 0.70, 1}
\definecolor{bordercolor4}{rgb}{0.76, 0.03, 1}
\definecolor{lightcolor5}{rgb}{0.78,0.86,0.93}
\definecolor{bordercolor5}{rgb}{0.74,0.55,0.76}
\title{Abelian surfaces in Hesse form\\ and explicit isogeny formulas}
\author{Thomas Decru${}^{\orcidlink{0000-0003-0253-4180}}$, Sabrina Kunzweiler${}^{\orcidlink{0000-0002-6179-2094}}$}
\begin{document}
	\begin{abstract}
	We develop a new method for the computation of $(3,3)$-isogenies between principally polarized abelian surfaces. The idea is to work with models in $\PP^8$ induced by a symmetric level-$3$ theta structure. In this setting, the action of three-torsion points is linear, and the isogeny formulas can be described in a simple way as the composition of easy-to-evaluate maps. In the description of these formulas, the relation with the Burkhardt quartic threefold plays an important role. Furthermore, we discuss generalizations of the idea to higher dimensions as well as different isogeny degrees.
	\end{abstract}
	\maketitle

	\section{Introduction}

For explicit computations with elliptic curves, one often works with a plane projective model. Concretely, given an elliptic curve $E$ with distinguished point $\O$, an embedding into the projective plane is defined by the choice of a basis for $L(3\O)$. A typical choice is $\{1,x,y\}$ where $x$ has a pole of order two and $y$ a pole of order three at $\O$. In this setting, the image of $E$ under the embedding
\begin{align*}
E \hookrightarrow \PP^2, \quad P &\mapsto (x(P):y(P):1), ~~ P \neq \O,\\
\O & \mapsto (0:1:0),
\end{align*}
is defined by a Weierstra\ss\ equation.
In the following, we are interested in a different choice of coordinates for $L(3\O)$. 

From now on, assume that we are working over a field with characteristic $p \neq 3$.
We know that the $3$-torsion points of $E$ act by linear transformation on the coordinates of a point. Denoting $E[3] = \langle P, Q\rangle$, we choose a basis so that the corresponding embedding $\iota: E \to \PP^2$ satisfies
\[
\iota(R) + \iota(P) = \begin{pmatrix} 0 & 1 & 0\\ 0&0&1\\ 1&0&0 \end{pmatrix} \cdot \iota(R),\quad 
\iota(R) + \iota(Q) = \begin{pmatrix} 1 & 0 & 0\\ 0&\omega&0\\ 0&0&\omega^2 \end{pmatrix} \cdot \iota(R),
\]
for any point $R \in E$. Here $\omega = e_3(Q,P)$ denotes the Weil pairing of $P$ and $Q$; in particular $\omega$ is a primitive third root of unity. The image of $E$ in $\PP^2$ is in {\em Hesse form}. That is there exists a projective parameter $d = (d_0:d_1)$ with $d_0(d_0^3-d_1^3) \neq 0$ so that $\iota(E) = \H_d$ with
\begin{equation} \label{eq:ell-hesse}
    \H_d: d_1(X^3 + Y^3 + Z^3) = 3d_0 XYZ.
\end{equation}
More formally, this embedding is induced by a symmetric theta structure of level $3$, and in particular, the coordinate functions $X,Y,Z$ correspond to level-$3$ theta functions, \cite[Section 5, Case (b)]{mumford1966equations}. Note that this model is defined over the base field if $E$ has full rational $3$-torsion. In the literature, the neutral element is usually chosen to be $0_{\H_d} = (0:-1:1)$. In this case, the $3$-torsion basis generators from above are of the form
\begin{equation} \label{eq:ell-hesse-torsion}
    P = (-1: 1 :0) ,\quad  Q = (0: -\omega^2 : 1) \quad \in \quad \H_{d}.
\end{equation}

In this work, our focus is on principally polarized (p.p.) abelian surfaces. Throughout the paper, we work over a field $k$ with characteristic $p\neq 2,3$ which contains a primitive third root of unity $\omega$.  The construction above generalizes to higher dimension.
In particular, for a p.p. abelian surface with full rational $3$-torsion, we  can consider the embedding  
\[
\iota: A \hookrightarrow \PP^8
\]
induced by a symmetric theta structure of level $3$. As in the elliptic curve case, the action of $3$-torsion points in this model is normalized. Explicit equations for the image $\A_{d,h} := \iota(A)$ in $\PP^8$ are known. More precisely, $\A_{d,h}$ is described as the zero locus of $8$ quadrics and $4$ cubic equations, \cite{gunji2006defining}.  In our notation $d =(d_0: \dots:d_4) \in \PP^4$ is a coefficient vector defining the cubic equations, and $h = (h_0:\dots: h_4) \in \PP^4$ defines the quadrics (see Theorem \ref{thm:defining-equations-dim2} for a precise description). In analogy with the elliptic curve setting, we say that $\A_{d,h}$ is a principally polarized abelian surface in {\em Hesse form}.

\subsection{Contributions}
The main result of our paper is a new method for the computation of $(3,3)$-isogenies between p.p. abelian surfaces in Hesse form. Here, the prefix $(3,3)$ indicates that the kernel of the isogeny is a maximal $3$-isotropic group. The isogeny evaluation will be decomposed into four simple operations (see Notation \ref{not:basic-operations} for more formal definitions):
\begin{itemize}
    \item $S_K$: a symplectic transformation depending on the input kernel $K$,
    \item $\cube$: coordinate-wise cubing,
    \item $M_{3,3}$: a discrete Fourier transform (which is a symplectic transformation as well),
    \item $C_\lambda$: coordinate-wise (symmetric) scaling by the vector $\lambda \in \PP^4$.
\end{itemize}

Using this notation, our results can be summarized as follows.
\begin{theorem*}[informal]
    Let $\A_{d,h}$ be an abelian surface in Hesse form. On input a maximal isotropic group $K = \langle Q_1,Q_2\rangle \subset \A_{d,h}[3]$ and $9$-torsion points $R_1,R_2$ satisfying $3\cdot R_1 = Q_1$, $3\cdot R_2 = Q_2$, we can determine a symplectic transformation $S_K$, and a vector $\lambda \in \PP^4$ 
    so that 
    \[
    \phi = (C_\lambda \circ M_{3,3} \circ \cube \circ S_K): \A_{d,h} \to \A_{d',h'},
    \]
    describes a $(3,3)$-isogeny with kernel $K$, between p.p. abelian surfaces in Hesse form.
\end{theorem*}

Ignoring the cost of additions and scalar multiplications by a third root of unity $\omega$, the maps $S_K$ and $M_{3,3}$ are essentially free. The cost of an isogeny evaluation is then given by $9$ cubings (application of  $\cube$), and  $9$ multiplications (application of $C_\lambda$) over the field of definition. 

We remark that this method allows us to evaluate multiplication by $3$ on $\A_{d,h}$ as the composition of a $(3,3)$-isogeny with its dual (provided that we are given two auxiliary $9$-torsion points as in the theorem).
Furthermore, we highlight that a priori the method  works for general p.p. abelian surfaces in Hesse form. In particular, so-called gluing, splitting and diagonal isogenies are included. However, the concrete computation of the coefficient vector $\lambda$ has to be adapted when working with certain edge cases.

On a technical side, our method is based on classical results about theta structures and the moduli space of abelian surfaces of level $3$. An important ingredient is the explicit description of $\A_{d,h}$ by Gunji \cite{gunji2006defining} which is based on the description of quadratic relations of level-$3$ theta functions by Coble \cite{coble1917point}, and cubic relations by Birkenhake and Lange \cite{birkenhake1990cubic}. Furthermore, the explicit relation of the model $\A_{d,h}$ with the Burkhardt quartic and its dual play an important role. For this viewpoint, we rely on an analysis of the moduli space by Freitag and Manni \cite{freitag2004burkhardt1}, \cite{freitag2004burkhardt2},  Hunt's exposition on the geometry of the Burkhardt quartic \cite{hunt2006burkhardt}, and further results by Bruin and Nasserden on the arithmetic of the Burkhardt quartic  \cite{bruin2018arithmetic}.

Our article is accompanied by a SageMath \cite{sagemath} package implementing the described methods and different functionality concerning the arithmetic of abelian surfaces in Hesse form:
\begin{quote}
    \url{https://github.com/sabrinakunzweiler/level-3-arithmetic}
\end{quote}

\subsection{Comparison with isogeny formulas in the literature}

Our method is inspired by $(2,2)$-isogeny formulas on Kummer surfaces using the level-$2$ theta structure, \cite{gaudry2007fast,damien-notes,level-2-theta-dimension-2}. In that setting, addition by $2$-torsion points acts by linear transformation on the coordinates of a point. This has led to remarkably simple isogeny formulas. Similar to our results, a $(2,2)$-isogeny is decomposed into four simple operations: a symplectic transformation, coordinate-wise squaring, a Hadamard transform, and coordinate-wise scaling. Note that the Hadamard transform is a discrete Fourier transform (with respect to a primitive square root of unity); and clearly cubing is the natural analogue of squaring. In that sense our method to compute $(3,3)$-isogenies can be seen as the natural analogue.

We note that there exist general purpose algorithms to compute $(\ell,\ell)$-isogenies in the level-$2$ (or level-$4$) theta model developed in a series of work by Cosset, Lubicz and Robert \cite{lubicz2012computing,cosset2015computing,lubicz2023fast}, see also \cite{yoshizumi2025efficient} for concrete optimizations for different values of $\ell$. While these algorithms are also based on the theory of theta functions, they are different in nature. In particular, working with even level, the translation by $3$-torsion points is not given by linear transformations, hence this symmetry cannot be used to derive isogeny formulas.

Previous specialized algorithms to compute $(3,3)$-isogenies are based on different ideas. Explicit formulas to compute $(3,3)$-isogenies between Jacobians of genus-$2$ curves were first given by Bruin, Flynn and Testa \cite{3descent} in the context of $(3,3)$-descent. Some algebraic simplifications of the formulas were later provided in \cite{decru2023efficient} which made them suitable in a cryptanalytic context. An advantage of the approach by Bruin, Flynn and Testa is that they only require the kernel of the $(3,3)$-isogeny to be rational, while  our approach requires full rational $3$-torsion. However, the evaluation of the formulas is considerably more expensive than in our new framework.
The most efficient known method to compute $(3,3)$-isogenies is an algorithm by Corte-Real Santos, Costello and Smith \cite{corte2025efficient} working on so-called {\em fast} Kummer surfaces, and requiring full rational $2$-torsion. In their optimized formulas, an isogeny evaluation costs $4$ squarings and $26$ multiplications while in our approach, a $(3,3)$-isogeny is evaluated with $9$ squarings and $18$ multiplications. This makes our new method (slighly) faster. But it should be mentioned that in turn, the tripling formula on the Kummer surface is more efficient.

The mentioned methods for the computation of $(3,3)$-isogenies all work on (different models of) the Kummer surface. In contrast to this, in our approach, we directly work on the abelian surface itself. Moreover, our method is not restricted to working with irreducible p.p. abelian surfaces. But the description of p.p. abelian surfaces in Hesse form, $\A_{d,h}$, naturally includes products of elliptic curves.

\subsection{Applications and perspectives}

Explicit isogeny computations are important for algorithmic number theory. For instance, $(3,3)$-isogenies have been used in descent \cite{3descent} or in the construction of Jacobians of genus-$2$ curves with points of large order \cite{howe-point-counting}.  Moreover, $(3,3)$-isogenies have recently found applications in post-quantum cryptography \cite{SIDH-attack}. Our method is particularly well-suited for the computation of $(3^n,3^n)$-isogenies for some large integer $n$. Decomposing the computation as a chain of $(3,3)$-isogenies, the auxiliary $9$-torsion points are naturally available at each step. For instance, our method could be applied in isogeny-based cryptography, more precisely in the two-dimensional variants of the signature scheme SQIsign \cite{SQIPrime,SQISign2D-West,SQISign2D-East}, one of which is in the running to become standardized by NIST \cite{sqisign}. One of the bottleneck computations in this scheme is the evaluation of a $(2^n,2^n)$-isogeny which could (after some parameter changes) be replaced by a $(3^n,3^n)$-isogeny. % In our implementation, we apply our new formulas to compute such a $(3^n,3^n)$-isogeny between a product of elliptic curves.

Apart from these concrete applications, our results more generally show that the $\PP^8$-model of a p.p. abelian surface is well-suited for explicit computations. Despite the similarities with $\PP^2$-models for elliptic curves, we are not aware of many computational applications  of the former in the literature.

An interesting question is whether our method generalizes to arbitrary dimension. We briefly elaborate on this question in the last section of the paper, but leave a concrete analysis of specific dimensions as future work.\footnote{It is known that the analogous  $(2,2)$-isogeny formulas hold in arbitrary dimension \cite{damien-notes}. And in practice, they have been implemented for different purposes in dimensions $1,2,3$ and $4$ \cite{sarkis2024computing,level-2-theta-dimension-2,kunzweiler2025radical,level-2-theta-dimension-4}.} In the preparation of this article, we verified that the computation of $3$-isogenies between elliptic curves in Hesse form is completely analogous. More precisely, a $3$-isogeny $\phi: \H_d \to \H_{d'}$ with kernel $K$ can be evaluated as 
\[
\phi = (C_\lambda  \circ M \circ \cube \circ S_K)(P),
\]
where similar as before $C_\lambda: \PP^3 \to \PP^3$ denotes coordinate-wise (symmetric) scaling, $M$ is discrete Fourier transform, $\cube$ coordinate-wise cubing, and $S_K$  a symplectic transformation.  A short article presenting these formulas, restricted to elliptic curves, is currently in submission for the proceedings volume of a cryptographic conference, where we presented this phenomenon \cite{hessiantripling}.  Given that $3$-isogeny formulas between elliptic curves in Hesse form are well known, the proofs in this case are elementary and independent of the theory developed in this manuscript. However, the formulas in the elliptic curve case immediately follow from our main result, applied to a product of elliptic curves in Hesse form. 

Another future direction of research is the translation of our formulas to the Kummer surface of $\A_{d,h}$. On the one hand, under the projection $\A_{d,h} \to \A_{d,h}/\langle \pm 1\rangle$ some of the symmetry concerning the action of $3$-torsion points gets lost. On the other hand there are fewer coordinates to work with. It would be interesting to see if the resulting formulas for the computation of $(3,3)$-isogenies, and in particular tripling, become more efficient.

\subsection{Outline}
In Section \ref{sec:burkhardt}, we recall properties of the Burkhardt quartic, its dual and its Hessian. Section \ref{sec:hesse-form} is dedicated to the description of p.p. abelian surfaces in Hesse form. In particular, we analyse the relation with the Burkhardt quartic, symplectic transformations, and the special case of product surfaces.  The $3$-torsion points and their action on $\A_{d,h}$ are described in Section \ref{sec:three-torsion}, and Section \ref{sec:twisted-surface} includes a short description of twists of $\A_{d,h}$. Section \ref{sec:isogeny} is dedicated to $(3,3)$-isogenies.  We prove our main result (Theorem \ref{thm:3-isogeny-dim2}), and explain adaptations to obtain a completely general method. This is further illustrated by a detailed example in Section \ref{sec:example}. Finally, in Section \ref{sec:generalizations}, we discuss generalizations of our method to higher dimensions and different isogeny degrees.

\subsection{Acknowledgements}

We would like to thank Damien Robert for his very helpful and insightful feedback throughout this project. Thomas Decru is supported by Fonds voor Wetenschappelijk Onderzoek (FWO) with reference number 1245025N, by the European Research Council (ERC) under the European Union’s Horizon 2020 research and innovation programme (grant agreement ISOCRYPT – No. 101020788), by the Research Council KU Leuven grant C14/24/099 and by CyberSecurity Research Flanders with reference number VOEWICS02. Sabrina Kunzweiler received funding from the French National Research Agency (ANR) under the ANR CIAO with reference ANR-19-CE48-0008, and the France 2030 program under grant ANR-22-PETQ-0008 (PQ-
TLS).
    \section{The Burkhardt quartic} \label{sec:burkhardt}

The {\em Burkhardt quartic} is a hypersurface in $\PP^4$ defined by the quartic equation
\begin{equation}
	\B: F(x_0,x_1,x_2,x_3,x_4) = x_0(x_0^3 + x_1^3 + x_2^3 + x_3^3 + x_4^3) + 3 x_1x_2x_3x_4 = 0 \subset \PP^4. \label{eq:burkhardt-quartic}
\end{equation}

The Burkhardt quartic is birationally equivalent to the moduli space of principally polarized abelian surfaces with a full level-$3$ structure  over $\ZZ[1/3, \omega]$, \cite{van1987note}. In particular, to any nonsingular point $h=(h_0: \dots: h_4) \in \B$, one can associate an irreducible p.p. abelian surface with marked level-$3$ structure. An explicit construction is provided in \cite{bruin2018arithmetic}. 

\subsection{Automorphism group} The automorphism group of the Burkhardt quartic is isomorphic to the projective symplectic group $\PSp_4(\FF_3)$. We use the representation $\Aut(\B)  = \langle M, S_0,S_1,S_2\rangle$ with
\begin{equation}
	\label{eq:automorphisms}
	\begin{aligned}
	M =&~ \begin{pmatrix}
		1 & 2 & 2 & 2 & 2 \\
		1 & -1 & 2 & -1 & -1 \\
		1 & 2 & -1 & -1 & -1 \\
		1 & -1 & -1 & -1 & 2 \\
		1 & -1 & -1 & 2 & -1
	\end{pmatrix},
	\end{aligned}
	\quad 
	\begin{aligned}
	S_0 =&~ \diag(1,\omega,1,\omega, \omega)\\
	S_1 =&~ \diag(1,1,1,\omega^2, \omega)\\
	S_2 =&~ \diag(1,1,\omega,\omega, \omega).
	\end{aligned}
\end{equation}
from \cite[Proposition 1]{freitag2004burkhardt1}. Here, $\diag(\alpha_0, \dots, \alpha_4)$ denotes the diagonal matrix with entries $\alpha_0, \dots, \alpha_4$ on the diagonal. The action on points of the quartic is given by the right action on the coordinate vector; we simply denote $h \cdot g$ for $h \in \B$ and $g \in \langle M, S_0,S_1,S_2\rangle$.

\begin{remark}
    The automorphisms defined by $S_0,S_1,S_2$ act by scaling the coordinates of a point on the Burkhardt quartic by third roots of unity in a compatible way. We note that
    \[
    \langle S_0,S_1,S_2 \rangle = \{\diag(1,\omega^a, \omega^b, \omega^c, \omega^{2(a + b + c)}) \mid a,b,c \in \{0,1,2\} \}.
    \]
    Furthermore, we will see later that the automorphism defined by $M$ is related to a discrete Fourier transform (Definition \ref{def:DFT}).
\end{remark}

In our applications, we will be interested in reducible abelian surfaces, as well. For this purpose, we first recall a result about the singular points of $\B$. 

\begin{lemma} \label{lem:singularities-burkhardt}
	The singular locus of the Burkhardt quartic, $\Bs$, consists of  $45$ singular points. Explicitly, the singularities $h = (h_0:h_1:h_2:h_3:h_4) \in \B$ are given by
	\[
	(a) \quad h = \sigma (0: 0: 0: -\omega^i:1), \quad \text{with } i \in \{0,1,2\}
	\]
	and $\sigma \in \mathcal S_4 \subset \mathcal S_5$ a permutation fixing $h_0= 0$,
	\[
	(b) \quad h = (1 : -\omega^i : - \omega^j : -\omega^k : -\omega^{-i-j-k}) \quad \text{with } i,j,k \in \{0,1,2\}. 
	\]
	Moreover, $\Aut(\B)$ acts transitively on $\Bs$. 
\end{lemma}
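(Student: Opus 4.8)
The plan is to verify the singular locus by a direct computation with the Jacobian criterion, then establish transitivity of the $\Aut(\B)$-action by tracking two representative points under the generators.

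First I would compute the partial derivatives of $F$. Writing $F = x_0(x_0^3 + x_1^3 + x_2^3 + x_3^3 + x_4^3) + 3x_1x_2x_3x_4$, we get
\[
\partial_0 F = 4x_0^3 + x_1^3 + x_2^3 + x_3^3 + x_4^3, \qquad \partial_i F = 3x_0 x_i^2 + 3\,\frac{x_1x_2x_3x_4}{x_i} \text{ for } i \in \{1,2,3,4\},
\]
where the latter is understood as $\partial_i F = 3x_0x_i^2 + 3\prod_{j \neq i, j\geq 1} x_j$. A point is singular iff it lies on $\B$ and all five partials vanish. I would split into cases according to whether $x_0 = 0$. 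If $x_0 = 0$, the equations $\partial_i F = 0$ for $i\geq 1$ force $\prod_{j\neq i} x_j = 0$ for each $i$, so at least two of $x_1,\dots,x_4$ vanish; combined with $\partial_0 F = x_1^3+x_2^3+x_3^3+x_4^3 = 0$ and the fact that the point is nonzero, one finds exactly two coordinates among $x_1,\dots,x_4$ are zero and the other two satisfy $a^3 + b^3 = 0$, i.e. $a/b = -\omega^i$. This gives the points in family $(a)$: choosing which two of the four coordinates are nonzero gives $\binom{4}{2} = 6$ choices, and $3$ choices of $\omega^i$, for $18$ points. If $x_0 \neq 0$, normalize $x_0 = 1$; then $\partial_i F = 0$ gives $x_i^3 = -x_1x_2x_3x_4$ for all $i\geq 1$, so all $x_i^3$ are equal (hence each $x_i \neq 0$) and $x_i^3 = -(x_1x_2x_3x_4)$. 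Setting $x_i = -\omega^{a_i}\zeta$ for a common cube root, one solves and finds the normalized solutions $x_i = -\omega^{a_i}$ with $a_1+a_2+a_3+a_4 \equiv 0 \pmod 3$; the equation $F = 0$ is then automatically satisfied (it reduces to $1\cdot(1 + 3\cdot(-1)^3\cdot\text{const}) + \dots$, which one checks vanishes using $\sum \omega^{3a_i}\!=\!4$ — actually the cleanest check is to plug $x_i^3 = -x_1x_2x_3x_4$ back into $F$). This gives family $(b)$: $a_1,a_2,a_3$ free in $\{0,1,2\}$ and $a_4$ determined, so $27$ points. Total: $18 + 27 = 45$, as claimed.

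For transitivity of $\Aut(\B) = \langle M, S_0, S_1, S_2\rangle$ on $\Bs$, since $\langle S_0, S_1, S_2\rangle$ already acts transitively within each family — on family $(a)$ the scaling group acts transitively on the choice of $\omega^i$ for a fixed pair of nonzero coordinates, and the permutation part needed to move between pairs of nonzero coordinates must come from $M$ (or be generated together with $M$); on family $(b)$ the group $\langle S_0,S_1,S_2\rangle = \{\diag(1,\omega^a,\omega^b,\omega^c,\omega^{2(a+b+c)})\}$ from the earlier remark acts simply transitively — it suffices to exhibit a single element of $\Aut(\B)$ carrying one point of family $(a)$ to one point of family $(b)$, and to check that conjugates of the diagonal group generate enough permutations. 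Concretely I would take the representative $(0:0:0:-1:1)$ and compute its image under $M$; a direct matrix-vector multiplication gives $(0\cdot 1 + \dots) = (2 - (-1): \dots)$, landing in the dense torus $x_0 \neq 0$, which after normalization must be a point of family $(b)$ (it lies in $\Bs$ because $M$ is an automorphism and singularities are preserved). Combining: every family-$(b)$ point is reachable from this image via $\langle S_0,S_1,S_2\rangle$, and every family-$(a)$ point is reachable from $(0:0:0:-1:1)$ via $\langle S_0,S_1,S_2\rangle$ together with the permutations obtained as $M$-conjugates (or one notes $S_0$ already permutes suitably after conjugation) — hence the whole action is transitive.

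**The main obstacle** I anticipate is the bookkeeping in the case analysis: handling the "some coordinates vanish" subcases cleanly when $x_0 = 0$ (ensuring one does not double-count or miss the configurations where three or four of $x_1,\dots,x_4$ vanish — these should be excluded because they either fail $\partial_0 F = 0$ nontrivially or give the zero vector) requires care. The transitivity step is conceptually easy once one trusts the structural fact (birational to a moduli space with symplectic symmetry) but still needs an honest computation of $M$ applied to at least one singular point to pin down that the $M$-orbit genuinely connects the two families; I would present that single matrix multiplication explicitly and let the group-theoretic closure argument do the rest.
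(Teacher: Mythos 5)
Your computation of the singular locus is correct and is essentially the paper's own argument: a case split on $h_0=0$ versus $h_0\neq 0$ applied to the Jacobian criterion, with the subcases of three or four vanishing coordinates correctly excluded via $\partial_0 F\neq 0$. (The paper states this step in one line and gives no more detail than you do.)

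The gap is in the transitivity step, and it is concrete: the witness you propose for connecting the two families does not exist. The point $(0:0:0:-1:1)$ is a \emph{fixed point} of $M$. Indeed, with the paper's right action, $h\cdot M$ for $h=(0,0,0,-1,1)$ is $-\mathrm{row}_4(M)+\mathrm{row}_5(M)=(0,0,0,3,-3)\sim(0:0:0:-1:1)$, and the left action gives the same answer; so the asserted ``direct matrix-vector multiplication \dots\ landing in the dense torus $x_0\neq 0$'' is false, precisely at the point where your argument needed an honest computation. (Your displayed first coordinate ``$2-(-1)$'' is also not what the matrix gives: the first column of $M$ is all $1$'s, so the first coordinate of $h\cdot M$ is simply the sum of the entries of $h$, which is $0$ here.) The repair is easy: take $(0:0:0:-\omega:1)$ instead; its image under $M$ is $(1-\omega:\omega-1:\omega-1:\omega+2:-2\omega-1)\sim(1:-1:-1:-\omega^2:-\omega)$, a type-$(b)$ point. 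Better, this observation closes the other loose end in your sketch (the unsubstantiated claim that ``$M$-conjugates generate enough permutations'' to move between the six pairs of nonzero positions in family $(a)$): for any type-$(a)$ point with nonzero entries $a,b$, the first coordinate of its image under $M$ is $a+b$, which vanishes only when $a/b=-1$; hence for each of the six pairs, two of its three points are carried by $M$ into family $(b)$, and since $\langle S_0,S_1,S_2\rangle$ is transitive on the three points of each fixed pair and simply transitive on family $(b)$, transitivity follows with no permutation matrices needed. For comparison, the paper does not attempt a structural argument at all: it verifies transitivity by exhibiting, for each of the $45$ points, an explicit $g\in\Aut(\B)$ with $h\cdot g=(0:0:0:-1:1)$ in the accompanying source code.
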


\begin{proof}
	A case distinction between $h_0 = 0$ and $h_0 \neq 0$ yields the explicit description of the singularities of $\B$. There are $18$ singularities  of type $(a)$, and $27$ of type $(b)$. 
	
	The transitivity of the action can be verified explicitly. Concretely, for any singular point $h \in \B$, we computed an element $g \in \Aut(\B)$ with $h\cdot g = (0:0:0:-1:1)$; see \cite{source_code}.
\end{proof}

\subsection{Burkhardt dual}\label{subsec:burkhardt-dual} The  Burkhardt quartic is only birationally equivalent to the moduli space of principally polarized abelian surfaces with level-$3$ structure. The Satake compactification of the moduli space is given by the normalization of the {\em dual of the Burkhardt quartic}, \cite{freitag2004burkhardt1}. The latter will allow us to describe reducible abelian surfaces.

\begin{definition}
	The {\em Burkhardt dual} $\tilde{\B}$ is the dual of the Burkhardt quartic $\B$, i.e, it is a hypersurface in $\PP^4$ defined by $\tilde{F}$, where $\tilde{F}$ is the dual of the polynomial $F$ defining ${\B}$.
\end{definition}

 The polynomial $\tilde{F}$ is homogeneous of degree $18$. An explicit description is provided in \cite[Proposition 3]{freitag2004burkhardt1}.  We further recall that by the definition of the dual, $\tilde{F}$ is the minimal polynomial satisfying
\[
\tilde{F}(\nabla F) \cong 0 \pmod{F},
\]
where $\nabla$ denotes the gradient operator. In particular, $\nabla$ defines an embedding $\nabla: \B \setminus \Bs \to \tilde{\B}$. Explicitly, 
\begin{equation}
	\label{eq:gradient-embedding}
	\nabla : (x_0: \dots: x_4) \mapsto \begin{pmatrix}
		4 x_0^3 + x_1^3 + x_2^3 + x_3^3 + x_4^3\\
		3(x_0x_1^2 + x_2x_3x_4)\\
		3(x_0x_2^2 + x_1x_3x_4)\\
		3(x_0x_3^2 + x_1x_2x_4)\\
		3(x_0x_4^2 + x_1x_2x_3)
	\end{pmatrix} .
\end{equation}
The action of $\Aut(\B)$ extends to an action on the dual, where the elements $g \in \Aut(\B)$ act by multiplication on the left with $g^{-1}$; we denote $g^{-1} \cdot d$ for $d \in \tilde{\B}$. Note that, up to multiplication by a scalar, $M \equiv M^{-1}$ for the matrix $M$ defined above.

\subsection{Hessian of the Burkhardt quartic} Another variety that appears when studying the moduli space of abelian surfaces with level-$3$ structure is the Hessian of the Burkhardt quartic. 

\begin{definition}\label{def:hessian}
    The {\em Hessian of the Burkhardt quartic} $H(\B)$ is the projective hypersurface in $\PP^4$ defined by 
    \[
    H(\B): H(F) := \frac{1}{486} \det\left( \frac{\partial F}{\partial x_i \partial x_j}\right) = 0,
    \]
    where $F$ is the polynomial defining $\B$. 
\end{definition}

Note that the scaling ensures that $H(F)$ is integral and has content $1$ (see for example \cite[Section 3]{bruin2018arithmetic}).  One can check that $H(F)$ is homogenous of degree $10$. 

    \section{Principally polarized abelian surfaces in Hesse form} \label{sec:hesse-form}

Let $A$ be a principally polarized abelian surface. Consider a line bundle $L$ inducing the principal polarization. Then $L^{\otimes 3}$ is very ample, and defines an embedding $\iota:A \hookrightarrow\PP^8$. For any $3$-torsion point $T \in A[3]$, the translation map $P \mapsto P + T$ acts by linear transformation on the coordinates of $\iota(A)$. 
Here, we will consider a specific embedding $\Theta:A \to \PP^8$ induced by a symmetric theta structure of level $3$. In this setting, the action by $3$-torsion points is {\em normalized}.  

We first recall an explicit description of the model $\Theta(A) \subset \PP^8$. Then explain the relation of this model with points on the Burkhardt quartic and its dual in Subsection \ref{subsec:relations-burkhardt}, discuss symplectic transformations in Subsection \ref{subsec:symplectic-trafos}, and analyze the reducible setting in more detail in Subsection \ref{subsec:product}. The action of three-torsion points is discussed in Section \ref{sec:three-torsion}.

\begin{theorem}[Theorem 8.3 in \cite{gunji2006defining}] \label{thm:defining-equations-dim2}
    Let $A$ be a principally polarized abelian surface with a level-$3$ structure. Then there exist coefficient vectors $d=(d_0: \dots: d_4), h = (h_0:\dots:h_4) \in \PP^4$, and an embedding $\iota: A \to \PP^8$ with
    \[
    \iota(A) = V(F_1,F_2,F_3,F_4, G_0, \dots, G_8),
    \] where
    the $F_i$ are the cubic polynomials
    \begin{eqnarray*}
    F_1 &=& d_1\sum_{j=0}^8 X_j^3 - 3d_0 (X_0X_1X_2 + X_3X_4 X_5 + X_6X_7X_8),\\
    F_2 &=& d_2\sum_{j=0}^8 X_j^3 - 3d_0 (X_0X_3X_6 + X_1X_4 X_7 + X_2X_5X_8),\\
    F_3 &=& d_3\sum_{j=0}^8 X_j^3 - 3d_0 (X_0X_4X_8 + X_1X_5X_6 + X_2X_3X_7),\\
    F_4 &=& d_4\sum_{j=0}^8 X_j^3 - 3d_0 (X_0X_5X_7 + X_1X_3X_8 + X_2X_4X_6),
    \end{eqnarray*}
    and the $G_i$ are the quadratic polynomials 
    %obtained by applying the nine permutations of $\langle \sigma_1, \sigma_2 \rangle$
    %to the polynomial
    %\[
    %Q = h_0 X_0^2 + h_1 X_1 X_2 + h_2 X_3 X_6 + h_3 X_4 X_8 + h_4 X_5 X_7,
    %\]
    %where
    %\[
    %\sigma_1 = (0\,1\,2)(3\,4\,5)(6\,7\,8),\quad \sigma_2 = (0\,3\,6)(1\,4\,7)(2\,5\,8).
    %\]
    %More precisely, we let $G_i = Q(\sigma_i^ (X_0, \dots, X_8))$.  
    \begin{eqnarray*}
        G_0  &=& h_0 x_0^2 + h_1 x_1 x_2 + h_2 x_3 x_6 + h_4 x_5 x_7 + h_3 x_4 x_8,\\
        G_1 &=& h_0x_1^2 + h_1 x_0 x_2 + h_2 x_4 x_7 + h_3 x_5 x_6 + h_4 x_3 x_8,\\
        G_2 &=& h_0 x_2^2 + h_1x_0 x_1 + h_2 x_5 x_8 + h_3 x_3 x_7 + h_4 x_4 x_6,\\
        G_3 &=& h_0x_3^2 + h_1 x_4 x_5 + h_2 x_0 x_6 + h_3 x_2 x_7 + h_4 x_1 x_8,\\
        G_4 &=& h_0x_4^2 + h_1 x_3 x_5 + h_2 x_1 x_7 + h_3 x_0 x_8 + h_4 x_2 x_6,\\
        G_5 &=& h_0 x_5^2 + h_1x_3 x_4 + h_2 x_2 x_8 + h_3 x_1 x_6 + h_4 x_0 x_7,\\
        G_6 &=& h_0 x_6^2 + h_1 x_7 x_8 + h_2x_0 x_3 + h_3 x_1 x_5 + h_4 x_2 x_4,\\
        G_7 &=& h_0 x_7^2 + h_1 x_6 x_8 + h_2 x_1 x_4 + h_3x_2 x_3 + h_4 x_0 x_5,\\
        G_8 &=& h_0 x_8^2 + h_1 x_6 x_7 + h_2 x_2 x_5 + h_3 x_0 x_4 + h_4x_1 x_3.
    \end{eqnarray*}
\end{theorem}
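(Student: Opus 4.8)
This is Gunji's \cite[Theorem 8.3]{gunji2006defining}, so the expedient route is simply to cite it; I sketch here how one reconstructs the statement from the classical theory of theta functions. Fix a line bundle $L$ on $A$ inducing the principal polarization. Then $L^{\otimes 3}$ has type $(3,3)$ with $h^0(A,L^{\otimes 3})=9$ and $K(L^{\otimes 3})=A[3]$. A symmetric theta structure of level $3$ identifies the theta group $\G(L^{\otimes 3})$ with the finite Heisenberg group of genus $2$ and level $3$, and $H^0(A,L^{\otimes 3})$ with its standard irreducible representation, yielding a distinguished basis $(\theta_v)_{v\in(\ZZ/3\ZZ)^2}$; labelling $(\ZZ/3\ZZ)^2=\{0,\dots,8\}$ by $(i,j)\mapsto 3i+j$, the associated morphism $\iota=\phi_{L^{\otimes 3}}:A\to\PP^8$, $P\mapsto(\theta_0(P):\dots:\theta_8(P))$, is a closed embedding. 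By the classical theory of linear systems on abelian varieties (Koizumi, Mumford, Kempf), $\phi_{L^{\otimes n}}$ is projectively normal for $n\ge3$ and its homogeneous ideal $I$ is generated in degrees $\le3$; hence $\iota(A)=V(I)$, and it suffices to (i) compute $I_2$ and the part of $I_3$ outside $S_1\cdot I_2$, writing $S_\bullet$ for the coordinate ring of $\PP^8$, and (ii) match these against the displayed polynomials.

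For the quadrics, the Heisenberg action on $S_2$ restricts to $I_2$, and the Riemann relations for level-$3$ theta functions — made explicit by Coble \cite{coble1917point} — show that $I_2$ has dimension $\binom{10}{2}-h^0(A,L^{\otimes6})=45-36=9$ and forms a single Heisenberg orbit, the translates of one ``master quadric''. Its shape is forced by the combinatorics of $(\ZZ/3\ZZ)^2$ under $v\mapsto-v$: the fixed point $\{0\}$ together with the four $\{\pm v\}$-orbits, one for each line through the origin in $\FF_3^2$, give five coefficients $h=(h_0:\dots:h_4)$, and the master quadric is $G_0=\sum_{\text{orbits }O}h_O\,x_vx_{-v}=h_0x_0^2+h_1x_1x_2+h_2x_3x_6+h_3x_4x_8+h_4x_5x_7$, with $G_v(x)=\sum_O h_O\,x_{v+w_O}x_{v-w_O}$ the translate by $v\in(\ZZ/3\ZZ)^2$. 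Up to normalization $h$ records level-$3$ theta constants of $A$ and lies on the Burkhardt quartic, which is the moduli interpretation recalled in Section \ref{sec:burkhardt}. For the cubics, the analogous input is the explicit list of cubic theta relations due to Birkenhake and Lange \cite{birkenhake1990cubic}: beyond $S_1\cdot I_2$, the ideal $I_3$ acquires exactly one further Heisenberg orbit, of dimension $4$, and reading off its shape gives $F_1,\dots,F_4$, where each $F_i$ pairs the Heisenberg-invariant power sum $\sum_jX_j^3$ with the product of theta coordinates over the cosets of the $i$-th line through the origin, the five coefficients assembled into $d=(d_0:\dots:d_4)$. On the moduli level $d$ is the image of $h$ under the gradient map $\nabla$ of Section \ref{sec:burkhardt}, which explains why only five parameters $d_i$ appear and why they are constrained to lie on the Burkhardt dual.

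It remains, for step (ii), to see that the $13$ displayed polynomials generate all of $I$, not a larger ideal: the computed spaces exhaust $I_2$ and, together with $S_1\cdot I_2$, all of $I_3$, and $I$ is generated in degrees $\le3$. Alternatively one checks directly that for generic $(d,h)$ the subscheme $V(F_1,\dots,F_4,G_0,\dots,G_8)\subset\PP^8$ is irreducible of dimension $2$ with trivial dualizing sheaf, and concludes by flatness over the moduli base. The main obstacle is step (i): making the Riemann relations completely explicit in the chosen theta basis and verifying that the quadrics and cubics have precisely the stated monomial shapes, each governed by a single five-parameter vector — in effect reproving the computations of Coble and of Birkenhake–Lange in uniform notation. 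Once those relations are in hand, projective normality together with the dimension count closes the argument; we therefore take Theorem \ref{thm:defining-equations-dim2} as given and refer to \cite{gunji2006defining} for the details.
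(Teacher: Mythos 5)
The paper offers no proof of this statement: it is imported verbatim (up to the addition of the redundant cubic $F_4$) from Gunji's Theorem 8.3, so citing \cite{gunji2006defining} is exactly what the paper does and your proposal is correct in taking that route. Your accompanying sketch — Heisenberg orbit structure of the quadric and cubic parts of the ideal, the dimension counts via projective normality, and the indexing of the monomials by $\{\pm v\}$-orbits and by cosets of the four lines in $(\ZZ/3\ZZ)^2$ — is a faithful outline of how Gunji, building on Coble and Birkenhake--Lange, actually derives these equations.
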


We note that the description in \cite[Theorem 8.3]{gunji2006defining} does not include the polynomial $F_4$, since it is contained in the ideal generated by the remaining polynomials. However, in our setting, it will be more natural to include this polynomial in the description. Furthermore, we point out that the cited theorem is more precise in the sense that it provides an explicit description of the coefficients. However, for our purposes, the above statement is enough.

\begin{remark}
    In \cite{gunji2006defining}, the author uses the classic analytic theory of theta functions over $\CC$. Using Mumford's algebraic theory of theta functions, these results can be translated to arbitrary fields with characteristic $p>3$. The restriction on the characteristic comes from the fact that the coefficients of the defining equations are described by theta functions of level $18$, hence the equations are well-defined over $\ZZ[1/{18}]$.

    On the other hand, the coordinate functions $x_0, \dots , x_8$ are only of level $3$. It would be interesting to see if the results could be translated to $p=2$, as well.
    We refer to \cite[Section 2.3.6]{robert2021theory} for more details on lifting arguments concerning level-$n$ theta structures.
\end{remark}

\begin{definition} \label{def:hessian-form-av}
    We say that a principally polarized abelian surface $A$ is in {\em Hesse form} if
    \(
    A = V(F_1,F_2,F_3,F_4,G_0,\dots, G_8)
    \) with $F_i, G_i$ as in Theorem  \ref{thm:defining-equations-dim2}. In this case, we denote $A = \A_{d,h}$ where $d = (d_0:d_1:d_2:d_3:d_4)$ and $h = (h_0: h_1:h_2:h_3:h_4)$ with the notation from Theorem \ref{thm:defining-equations-dim2}.
\end{definition}

We use the terminology {\em Hesse form} as this form can be seen as an analogue to the Hesse form of elliptic curves. 

\begin{remark}
 If $\A_{d,h}$ is irreducible, then the variety is already defined by the quadrics, i.e. $V(F_1,F_2,F_3,F_4,G_0,\dots, G_8) = V(G_0,\dots, G_8)$. This was already claimed by Coble \cite{coble1917point} and proven by Barth \cite{barth1995quadratic}. The cubic relations are necessary to include the degenerate cases. The explicit description by Gunji \cite{gunji2006defining} that we use here, builds on work by Birkenhake and Lange on cubic theta relations \cite{birkenhake1990cubic}.
\end{remark}

\subsection{The $2$-torsion} \label{subsec:two-torsion}

Multiplication by $[-1]$ acts as a linear transformation on the coordinates of a point in $\A_{d,h}$. More precisely, it acts by the permutation $(0)(1\,2)(3\,6)(4\,8)(3\,7)$ on the coordinates of a point, i.e.
\[
    \begin{tikzcd} [row sep = 15pt] 
    ~~~P = (x_0:x_1:x_2:x_3:x_4:x_5:x_6:x_7:x_8) \dar[maps to]\\
    - P = (x_0:x_2:x_1:x_6:x_8:x_7:x_3:x_5:x_4).
    \end{tikzcd}
\]

\begin{remark}
    We recall that the coordinates $x_0, \dots, x_8$ correspond to level-$3$ theta functions. Using a ternary representation of the indices, i.e. 
    \[
        x_0 = x_{00},\; x_1 = x_{01} , \;\dots 
        \;, \; x_8 = x_{22},
    \]
    multiplication by $[-1]$ is given by $(x_i)_{i\in (\ZZ/3\ZZ)^2} \mapsto (x_{-i})_{i\in (\ZZ/3\ZZ)^2}$, as usual. We note that this notation is closer to the notation used in \cite{gunji2006defining}, in particular see \cite[Section 3]{gunji2006defining} for the relation with theta functions.
\end{remark}

The $2$-torsion points on $\A_{d,h}$ are the points that are fixed under multiplication by $[-1]$. These points come in two flavors: $\A_{d,h}[2]= U_1 \cup U_2$ with
\begin{eqnarray*}
	U_1 & = & \A_{d,h} \cap \{x_1 = x_2, x_3 = x_6, x_4 = x_8, x_5 = x_7\}, \\
	U_2 & = & \A_{d,h} \cap \{x_0 = 0, x_1 = -x_2, x_3 = -x_6, x_4 = -x_8, x_5 = -x_7\}.
\end{eqnarray*}
The elements in $U_1$ correspond to the eigenvalue $+1$ (of the multiplication by $[-1]$ map) and are called {\em even $2$-torsion points}; the elements in $U_2$ correspond to the eigenvalue $-1$ and are called {\em odd $2$-torsion points}. In total, there are $10$ even and $6$ odd $2$-torsion points \cite[Section 5.3.1]{hunt2006burkhardt}.

The neutral element can be either even or odd depending on whether the underlying theta structure is even or odd. Throughout this work, we assume that the underlying theta structure is even, hence the neutral element belongs to the set $U_1$. We denote
\begin{equation} \label{eq:neutral-element}
	0_{\A_{d,h}}  = (t_0 : t_1 : t_1 : t_2 : t_3: t_4 : t_2 : t_4 : t_3),
\end{equation}
for some $t_0, \dots, t_4$. 

This choice is natural for our applications since we typically construct $\A_{d,h}$ which is isogenous to a product of elliptic curves $E_1 \times E_2$. The latter comes equipped with the product theta structure - which is even if both $E_1$ and $E_2$ are equipped with the same type of theta structure (i.e. both even, or both odd).\footnote{In the literature, we only encountered examples with an odd $2$-torsion point as neutral element, see for example Theorem 3.14(b) in \cite{gruson2015alternating}. We thank Damien Robert for shedding light on this matter.}

Finally, we note that the coordinates of the neutral element are related to the geometry of the Burkhardt quartic. It holds that
\begin{equation} \label{eq:hessian-relation}
	(t_0 : 2t_1 : 2t_2 : 2t_3 : 2t_4) \in H(\B),
\end{equation}
where $H(\B)$ is the Hessian of the Burkhardt quartic (Definition \ref{def:hessian}). Indeed, this holds for all even $2$-torsion points. This relation can be obtained by studying the Kummer surface $\A_{d,h}/\langle \pm 1\rangle \subset \PP^4$ with coordinates $(x_0:x_1+x_2:x_3+x_6:x_4+x_8:x_5+x_7)$, and observing that the ten even $2$-torsion points correspond to the nodes of this Kummer surface,  cf. \cite[Section 5.3]{hunt2006burkhardt}. 

\subsection{Relation with the Burkhardt quartic} \label{subsec:relations-burkhardt}

We have already seen that the coordinates of the neutral element of $\A_{d,h}$ are related to a point on the Hessian of the Burkhardt quartic $\B$. Furthermore, it is well known that the coefficient vector $h$ of the quadrics defining $\A_{d,h}$ lie on the Burkhardt quartic itself:
\begin{equation} \label{eq:h-on-B}
h = (h_0: \dots :h_4) \in \B,
\end{equation}
see for example \cite[Section 5.3.1]{hunt2006burkhardt}.  Moreover, there is a relation with the coefficient vector $d$:
\begin{equation*}
h_0 d_0 + h_1 d_1 + h_2 d_2 + h_3 d_3 + h_4 d_4 = 0,
\end{equation*}
\cite[Equation 8.3]{gunji2006defining} which implies that $d$ lies on the dual of the Burkhardt quartic:
\begin{equation} \label{eq:d-on-Bd}
d = (d_0:\dots:d_4) \in \tilde{\B}.
\end{equation}
This is natural in the light of the description of the moduli space in \cite{freitag2004burkhardt1}, see also Subsection \ref{subsec:burkhardt-dual}. 

In the following, we provide explicit formulas for computing the coefficient vectors $d$ and $h$, given the neutral element $0_{\A_{d,h}}$ of an abelian surface in Hesse form $\A_{d,h}$. These formulas define maps between the Hessian of the Burkhardt quartic, its dual, and the Burkhardt quartic itself. We summarize the relations in Figure \ref{fig:relations-coefficients}. 

Our proofs of these relations are explicit, and only require knowledge of the defining equations. However, we note that they are induced by cubic relations of theta functions described in \cite{birkenhake1990cubic} which had been used to find the explicit equations for $\A_{d,h}$ in the first place \cite{gunji2006defining}. 

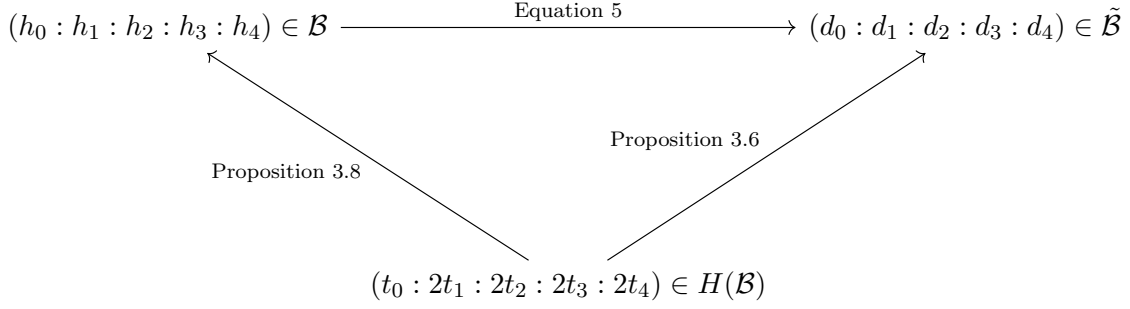
\begin{figure}
    \centering
\begin{tikzcd}[row sep=huge, column sep=tiny]
(h_0:h_1:h_2:h_3:h_4)\in \B \arrow[rr,"\text{Equation }\ref{eq:gradient-embedding}"] & & (d_0:d_1:d_2:d_3:d_4)\in \tilde\B \\ \\
& (t_0 : 2t_1 : 2t_2 : 2t_3 : 2t_4) \in H(\B) \arrow[uul,"\text{Proposition }\ref{prop:t-to-h}"] \arrow[uur,"\text{Proposition }\ref{prop:t-to-d}"]
\end{tikzcd}
    \caption{Relations between the used elements of the Burkhardt quartic, its dual and its Hessian.}
    \label{fig:relations-coefficients}
\end{figure}

\begin{proposition} \label{prop:t-to-d}
	Let $0_{\A_{d,h}}$ be the neutral element with coordinates defined by $t_0, \dots, t_4$ as in Equation \ref{eq:neutral-element}. If $t_0^3 + 2(t_1^3 + t_2^3 + t_3^3 + t_4^3) \neq 0$, then $d = (d_0: \dots: d_4)$ with 
	\[
	d_0 = t_0^3 + 2(t_1^3 + t_2^3 + t_3^3 + t_4^3), \quad d_i = 3t_0t_i^2 + 6 \prod_{j\neq 0, i} t_j.
	\]
\end{proposition}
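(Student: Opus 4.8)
The plan is to derive the formula from a single geometric fact: the neutral element $0_{\A_{d,h}}$ is a point of $\A_{d,h}$, hence its coordinates annihilate the four cubic forms $F_1,\dots,F_4$ of Theorem \ref{thm:defining-equations-dim2}. Substituting the explicit coordinate pattern $(t_0:t_1:t_1:t_2:t_3:t_4:t_2:t_4:t_3)$ from Equation \ref{eq:neutral-element} into each $F_i$ will produce one linear relation between $d_i$ and $d_0$, and the four relations together will pin down the projective vector $d$.

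First I would evaluate $\sum_{j=0}^{8} X_j^3$ at the neutral element: since $t_0$ occurs once among the nine coordinates and each of $t_1,\dots,t_4$ occurs exactly twice, this equals $N := t_0^3 + 2(t_1^3+t_2^3+t_3^3+t_4^3)$. Next, for each $i\in\{1,2,3,4\}$ I would check that the three degree-three monomials subtracted in $F_i$ specialise to $t_0 t_i^2 + 2\prod_{j\neq 0,i}t_j$: precisely one of the three monomials contains $X_0$ and contributes $t_0 t_i^2$, and the remaining two coincide — each equal to $\prod_{j\neq 0,i}t_j$ — because the coordinate pattern of $0_{\A_{d,h}}$ is invariant under the permutation $(1\,2)(3\,6)(4\,8)(5\,7)$ cutting out the locus $U_1$ of even $2$-torsion. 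The equations $F_i(0_{\A_{d,h}})=0$ then become
\[
d_i\, N \;=\; 3 d_0\Bigl(t_0 t_i^2 + 2\!\!\prod_{j\neq 0,i}\!\! t_j\Bigr), \qquad i=1,2,3,4.
\]

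Finally I would normalise projectively. Under the hypothesis $N\neq 0$, the assumption $d_0=0$ would force $d_1=d_2=d_3=d_4=0$ as well by the displayed relations, contradicting $d\in\PP^4$; hence $d_0\neq 0$. Dividing the four relations by $d_0$ then identifies $(d_0:d_1:d_2:d_3:d_4)$ with $\bigl(N : 3(t_0 t_1^2+2\prod_{j\neq 0,1}t_j) : \cdots : 3(t_0 t_4^2 + 2\prod_{j\neq 0,4}t_j)\bigr)$, which is exactly the claimed formula $d_0 = t_0^3+2(t_1^3+t_2^3+t_3^3+t_4^3)$, $d_i = 3t_0 t_i^2 + 6\prod_{j\neq 0,i} t_j$.

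I do not anticipate a real obstacle. The only steps needing care are the combinatorial bookkeeping of which of the three monomials in each $F_i$ carries the $X_0$-factor (and the attendant check that the other two agree on $0_{\A_{d,h}}$), and the small observation that $N\neq 0$ already implies $d_0\neq 0$, which is what makes the projective normalisation meaningful. As a consistency check one can verify that the output lies on $\tilde{\B}$ and is compatible, via the gradient map of Equation \ref{eq:gradient-embedding}, with the companion formula of Proposition \ref{prop:t-to-h}, but this is not needed for the argument.
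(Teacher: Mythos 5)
Your proof is correct and follows exactly the paper's (much terser) argument: evaluate the cubics $F_1,\dots,F_4$ at the neutral element and solve the resulting linear relations for $d_1,\dots,d_4$ in terms of $d_0$, using $N\neq 0$ to get a unique projective solution. The combinatorial bookkeeping you spell out (one monomial per $F_i$ carrying the $X_0$-factor, the other two coinciding by the symmetry of the even $2$-torsion locus) checks out.
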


\begin{proof}
	The values for $d_i$ are found by evaluating the cubic equations $F_1, \dots, F_4$ from Theorem \ref{thm:defining-equations-dim2} in the neutral element. This yields a unique solution if the sum $t_0^3 + 2(t_1^3 + t_2^3 + t_3^3 + t_4^3)$ does not vanish. 
\end{proof}

\begin{corollary} \label{cor:non-zero-coordinates}
	Let $\A_{d,h}$ be an abelian surface in Hesse form, and  $0_{\A_{d,h}} = (t_0 : t_1 : t_1 : t_2 : t_3: t_4 : t_2 : t_4 : t_3)$ the neutral element. Then there are at least two indices $i \neq j$ with $t_i,t_j \neq 0$. 
\end{corollary}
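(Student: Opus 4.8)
The plan is to argue by contradiction. Since $0_{\A_{d,h}}$ is a point of $\PP^8$, the tuple $(t_0,\dots,t_4)$ is nonzero; I would suppose that \emph{exactly} one entry is nonzero, say $t_{i_0}\neq 0$ and $t_j=0$ for all $j\neq i_0$, and derive a contradiction. The first step is to pin down $d$ using Proposition \ref{prop:t-to-d}. The quantity $N:=t_0^3+2(t_1^3+t_2^3+t_3^3+t_4^3)$ equals $t_0^3$ when $i_0=0$ and $2t_{i_0}^3$ otherwise, so $N\neq 0$ because $\char k\neq 2$; hence the proposition applies and gives $d_0=N\neq 0$ together with $d_i=3t_0t_i^2+6\prod_{j\neq 0,i}t_j$ for $i=1,2,3,4$. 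Every such $d_i$ vanishes: in $3t_0t_i^2$ one of the two factors $t_0,t_i$ is zero, and at least two of the three factors of $\prod_{j\neq 0,i}t_j$ are zero. Therefore $d=(1:0:0:0:0)$, and it remains to rule out this coefficient vector.

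For that I would combine $d=(1:0:0:0:0)$ with the incidence relations of Subsection \ref{subsec:relations-burkhardt}. The relation $\sum_i h_id_i=0$ forces $h_0=0$, and then the equation of $\B$ satisfied by $h$ (Equation \ref{eq:h-on-B}) collapses to $3h_1h_2h_3h_4=0$, so $h$ is degenerate: it has $h_0=0$ and at least one further zero coordinate (in fact at most two nonzero coordinates, as one sees by analysing the gradient map $\nabla$ when $h$ is a smooth point of $\B$, and the list of singular points with $h_0=0$ from Lemma \ref{lem:singularities-burkhardt} when it is not). On the other hand, with $d=(1:0:0:0:0)$ each cubic $F_1,\dots,F_4$ of Theorem \ref{thm:defining-equations-dim2} reduces to $-3d_0$ times the cubic monomial sum it contains, namely $X_0X_1X_2+X_3X_4X_5+X_6X_7X_8$, then $X_0X_3X_6+X_1X_4X_7+X_2X_5X_8$, and so on; all four of these must then vanish identically on $\A_{d,h}$. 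Pairing one of them with the now very sparse quadrics $G_i$ and eliminating variables, one finds that some product $X_aX_bX_cX_d$ of four coordinates vanishes on $\A_{d,h}$, so by irreducibility $\A_{d,h}$ lies in a coordinate hyperplane of $\PP^8$. This contradicts the fact that $\A_{d,h}=\iota(A)$ is the image of an embedding by the complete linear system $|L^{\otimes 3}|$ (Section \ref{sec:hesse-form}), hence spans $\PP^8$, which completes the proof.

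The hard part will be exactly this last elimination step -- showing that a degenerate pair $(d,h)$ cannot define an abelian surface. It genuinely uses that $\A_{d,h}$ is an abelian surface, equivalently a non-degenerate subvariety of $\PP^8$, and not merely that $(d,h)$ satisfies the Burkhardt incidence relations, since $(1:0:0:0:0)$ itself does lie on $\tilde{\B}$; a pure ``geometry of the Burkhardt quartic'' argument will therefore not close the gap. In a full write-up I would first use the diagonal automorphisms $S_0,S_1,S_2$ of $\B$ to reduce the degenerate $h$ to one or two normal forms, then carry out the elimination explicitly (this is also quick to confirm with the accompanying code \cite{source_code}), and I would double-check the spanning statement for $\A_{d,h}$ directly from the very-ampleness of $L^{\otimes 3}$.
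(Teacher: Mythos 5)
Your reduction is exactly the paper's: assume only one $t_{i_0}$ is nonzero, check that $N=t_0^3+2(t_1^3+\cdots+t_4^3)$ equals $t_{i_0}^3$ or $2t_{i_0}^3$ and hence is nonzero (a hypothesis of Proposition \ref{prop:t-to-d} that the paper silently glosses over, so this check is a welcome addition), and conclude $d=(1:0:0:0:0)$. Up to that point you and the paper agree completely.

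Where you diverge is in ruling out $d=(1:0:0:0:0)$, and this is where your write-up has a genuine gap. The paper does \emph{not} do an explicit computation here: it observes that $(1:0:0:0:0)\in\tilde{\B}$ is a cusp of the Satake compactification (citing \cite[Section 5]{freitag2004burkhardt1}), so it cannot be the coefficient vector of an abelian surface. You instead propose an elimination argument whose decisive claim --- that pairing the sparse cubics $X_0X_1X_2+X_3X_4X_5+X_6X_7X_8=0$, etc., with the quadrics $G_i$ (after constraining $h$) forces some product $X_aX_bX_cX_d$ to vanish identically on $\A_{d,h}$ --- is asserted but never carried out; you yourself flag it as ``the hard part.'' As written, the proof therefore does not close: everything after ``eliminating variables, one finds\dots'' is a plan, not an argument. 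The surrounding scaffolding is sound (the relation $\sum h_id_i=0$ does give $h_0=0$, the Burkhardt equation then gives $3h_1h_2h_3h_4=0$, and non-degeneracy of the image of $|L^{\otimes 3}|$ in $\PP^8$ would indeed yield the contradiction), but the bridge between ``$(d,h)$ is degenerate'' and ``$\A_{d,h}$ lies in a coordinate hyperplane'' is precisely the content you would need to supply. The quickest repair is to replace the elimination by the paper's moduli-theoretic observation that $(1:0:0:0:0)$ is a cusp; alternatively, actually perform and record the elimination (e.g.\ after normalizing $h$ by the $S_i$ as you suggest), at which point your argument becomes a self-contained, more elementary alternative to the citation.
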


\begin{proof}
	Assume that there is only one index $i$ with $t_i \neq 0$. Then by Proposition \ref{prop:t-to-d}, we have $d = (1 : 0 : 0 : 0: 0)$. However, this point defines a singular surface. Instead of providing an explicit computation, we note that this can be deduced from the moduli interpretation: the point $d = (1:0:0:0:0) \in \tilde{B}$ belongs to a cusp \cite[Section 5]{freitag2004burkhardt1}.
\end{proof}

The bound in the corollary is sharp, in the sense that there exist $\A_{d,h}$ with exactly two $t_i, t_j \neq 0$ for $i\neq j$ (see Subsection \ref{subsec:nongeneric}). 

Furthermore, we recall that $(t_0:2t_1: \dots :2t_4)$ is a point on $H(\tilde{B})$ (cf. Equation \ref{eq:hessian-relation}), hence after scaling the coordinates appropriately, the correspondence from Proposition \ref{prop:t-to-d}  defines a map 
\[
H(\B)\cap \{4 t_0^3 + t_1^3 + t_2^3 + t_3^3 + t_4^3 \neq 0\} \to \tilde{\B}, \quad (t_0: \dots: t_4) \mapsto (d_0: \dots: d_4).
\] Interestingly, the description of this map coincides with the gradient operator $\nabla: \B \to \tilde{\B}$ from Equation \ref{eq:gradient-embedding}. 

\begin{proposition} \label{prop:t-to-h}
	Let $0_{\A_{d,h}}$ be the neutral element with coordinates defined by $t_0, \dots, t_4$ as in Equation \ref{eq:neutral-element}. Then $h= (h_0: \dots: h_4)$ is an element in the kernel of the matrix
    \[
    \text{Hess}(F)(t_0,2t_1,2t_2,2t_3,2t_4) = \left( \frac{\partial F}{\partial x_i \partial x_j}\right)(t_0,2t_1,2t_2,2t_3,2t_4),
    %\begin{pmatrix}
    %    t_{0}^{2} & t_{1}^{2} & t_{2}^{2} & t_{3}^{2} & t_{4}^{2} \\
    %    t_{1}^{2} & t_{0} t_{1} & t_{3} t_{4} & t_{2} t_{4} & t_{2} t_{3} \\
    %    t_{2}^{2} & t_{3} t_{4} & t_{0} t_{2} & t_{1} t_{4} & t_{1} t_{3} \\
    %    t_{3}^{2} & t_{2} t_{4} & t_{1} t_{4} & t_{0} t_{3} & t_{1} t_{2} \\
    %    t_{4}^{2} & t_{2} t_{3} & t_{1} t_{3} & t_{1} t_{2} & t_{0} t_{4}  
    %\end{pmatrix}.
    \]
    where $\text{Hess}(F)$ denotes the Hessian matrix of the equation $F$ as in Definition \ref{def:hessian}.
    Generically, there is a unique nontrivial solution. Moreover, if $t_0t_i^2 - \prod_{j\neq 0, i}t_j\neq 0$ for all $i\in\{1,2,3,4\}$, then this solution is given by $h = (1 : h_1:h_2:h_3:h_4)$ with
	\[
	h_i = -\frac{t_0^3 + 2t_i^3 - \sum_{j \neq 0,i}t_j^3}{3(t_0t_i^2 - \prod_{j\neq 0, i}t_j)}.
	\]
\end{proposition}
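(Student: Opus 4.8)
The statement asserts two things: that $h$ lies in the kernel of the Hessian matrix of $F$ evaluated at the (scaled) coordinates of the neutral element, and that in the generic case this kernel is one-dimensional with an explicit generator. The natural approach mirrors the proof of Proposition~\ref{prop:t-to-d}: exploit the fact that the defining quadrics $G_0,\dots,G_8$ must vanish at the neutral element, and that the neutral element is an even $2$-torsion point, so it has the symmetric shape in Equation~\eqref{eq:neutral-element}. First I would substitute the coordinates $(x_0,\dots,x_8) = (t_0,t_1,t_1,t_2,t_3,t_4,t_2,t_4,t_3)$ into each $G_i$. Because the nine quadrics are permuted among themselves by the action of $[-1]$ and by the symmetries that fix the neutral element, the nine resulting linear equations in $h_0,\dots,h_4$ collapse to only a handful of distinct equations — I expect exactly five, matching the five entries of the Hessian row. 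The claim to verify is then that these five equations are precisely the rows of $\text{Hess}(F)(t_0,2t_1,2t_2,2t_3,2t_4)$ acting on $(h_0:\dots:h_4)^{T}$; this is a direct symbolic comparison: the partial derivatives $\partial^2 F/\partial x_i\partial x_j$ are monomials (each entry is a constant times $x_k$ or $x_0$), so evaluating at $(t_0,2t_1,2t_2,2t_3,2t_4)$ produces exactly the coefficients that appear when one collects the $h_j$ in the reduced $G_i$-equations, once one accounts for the factors of $2$ coming from the off-diagonal structure of the quadrics (each $h_j$ with $j\ge 1$ multiplies a product $x_ax_b$, contributing symmetrically).

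For the second part — genericity and the explicit formula — I would proceed as follows. Having reduced to a $5\times 5$ homogeneous linear system $\text{Hess}(F)(t_0,2t_1,\dots,2t_4)\cdot h = 0$, I would invoke Equation~\eqref{eq:hessian-relation}: the point $(t_0:2t_1:2t_2:2t_3:2t_4)$ lies on $H(\B)$, which by Definition~\ref{def:hessian} is exactly the vanishing locus of the determinant of this Hessian matrix. Hence the matrix is singular and a nontrivial solution exists; "generically" the rank is $4$, so the solution is unique up to scalar. To get the closed form under the hypothesis $t_0t_i^2 - \prod_{j\neq 0,i}t_j \neq 0$, I would solve the system explicitly: normalize $h_0 = 1$ and read off $h_i$ from one of the five equations — the $i$-th equation should, after substituting the symmetric coordinates, isolate $h_i$ with coefficient proportional to $t_0t_i^2 - \prod_{j\neq 0,i}t_j$ and constant term proportional to $t_0^3 + 2t_i^3 - \sum_{j\neq 0,i}t_j^3$, yielding the stated expression. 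I would then check consistency: the five equations must be mutually compatible, which follows from the membership on $H(\B)$, or can be verified directly by a short computation (and is in any case covered by the accompanying code \cite{source_code}).

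The main obstacle, as in Proposition~\ref{prop:t-to-d}, is purely bookkeeping: one must carefully track which of the nine quadrics $G_i$ collapse to the same linear form under the substitution, and get all the factors of $2$ right when matching to the Hessian matrix of $F$ — the diagonal entries $\partial^2 F/\partial x_0^2$ versus the off-diagonal ones $\partial^2 F/\partial x_i\partial x_j$ scale differently, and the quadrics $G_i$ mix a "square" term $h_0 x_i^2$ with four "cross" terms $h_j x_a x_b$, so the correspondence with rows of the Hessian is not term-by-term obvious. I would organize this by writing out $\text{Hess}(F)$ symbolically (it is a sparse $5\times 5$ matrix whose $(0,0)$ entry is $6x_0$, whose $(0,i)$ and $(i,0)$ entries are $6x_i$, whose $(i,i)$ entries are $6x_0$, and whose remaining off-diagonal $(i,j)$ entries are $6x_k$ for the complementary index $k$), evaluating at $(t_0,2t_1,2t_2,2t_3,2t_4)$, and comparing row by row with the reduced $G_i$. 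The genericity statement ("generically a unique nontrivial solution") should be read as: the locus where the Hessian has rank $\le 3$ is a proper closed subset, which can be seen from a single example where the rank is exactly $4$, together with semicontinuity of rank. Since explicit equations are available, I would simply remark that this — and the compatibility of the five equations — has been verified computationally, in keeping with the style of Lemma~\ref{lem:singularities-burkhardt}.
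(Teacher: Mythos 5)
Your overall strategy is the same as the paper's: evaluate the nine quadrics $G_0,\dots,G_8$ at the symmetric point $(t_0,t_1,t_1,t_2,t_3,t_4,t_2,t_4,t_3)$, observe that they collapse to five distinct linear conditions $c_0,\dots,c_4$ on $(h_0,\dots,h_4)$, and identify these with the rows of $\mathrm{Hess}(F)$ evaluated at $(t_0,2t_1,2t_2,2t_3,2t_4)$. That part is sound (though existence of a nontrivial kernel vector is even more immediate than your appeal to $H(\B)$: the vector $h$ itself is nonzero and satisfies the conditions). But note a degree error in your description of the Hessian: $F$ is a quartic, so its second partials are quadratic, not linear. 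One has $\partial^2F/\partial x_0^2 = 12x_0^2$, $\partial^2F/\partial x_0\partial x_i = 3x_i^2$, $\partial^2F/\partial x_i^2 = 6x_0x_i$, and $\partial^2F/\partial x_i\partial x_j = 3x_kx_l$ with $\{k,l\}$ the complementary pair; the matrix is dense, not sparse. Evaluated at $(t_0,2t_1,\dots,2t_4)$ and divided by $12$, the rows do match the $c_i$ (e.g.\ row $1$ is $(t_1^2,\,t_0t_1,\,t_3t_4,\,t_2t_4,\,t_2t_3)$), but the matrix you wrote down is not the one you would need to compare against.

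The genuine gap is in the derivation of the closed form for $h_i$. No single one of the five linear conditions isolates $h_i$ with the stated coefficients: each condition involves all five unknowns with degree-two coefficients, whereas the claimed numerator $t_0^3+2t_i^3-\sum_{j\neq 0,i}t_j^3$ and denominator $3\bigl(t_0t_i^2-\prod_{j\neq 0,i}t_j\bigr)$ are of degree three, so the degree count alone rules out your ``read off $h_i$ from the $i$-th equation'' step. The paper instead forms specific linear combinations of all five conditions with weights $\pm t_j$ (for instance $c_0t_0+2c_1t_1-c_2t_2-c_3t_3-c_4t_4$), each of which produces a cubic relation involving only $h_0$ and a single $h_i$, from which the formula follows. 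Your fallback of solving the $5\times 5$ system by brute force would of course succeed eventually, but as written the proposal does not actually arrive at the claimed expression.
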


\begin{proof}
    The first part of the statement is classical. It underlies the proof of the fact that the Burkhardt quartic is self-Steinerian \cite[Section 5.3.1]{hunt2006burkhardt}. We repeat the argument here, since this will be helpful for the second part of the proof.
    Evaluating the nine quadratic equations $G_0,\ldots,G_8$ from Theorem \ref{thm:defining-equations-dim2} in the neutral element $0_{\A_{d,h}}$ yields five distinct linear conditions $c_0, \dots, c_4$  that need to be satisfied:
    \begin{align*}
    (c_0): && 0 = &~ t_{0}^{2} h_{0} + t_{1}^{2} h_{1} + t_{2}^{2} h_{2} + t_{3}^{2} h_{3} + t_{4}^{2} h_{4}, \\
    (c_1): && 0 = &~t_{1}^{2} h_{0} + t_{0} t_{1} h_{1} + t_{3} t_{4} h_{2} + t_{2} t_{4} h_{3} + t_{2} t_{3} h_{4},\\
    (c_2): && 0 = &~ t_{2}^{2} h_{0} + t_{3} t_{4} h_{1} + t_{0} t_{2} h_{2} + t_{1} t_{4} h_{3} + t_{1} t_{3} h_{4}, \\  
    (c_3): && 0= &~ t_{3}^{2} h_{0} + t_{2} t_{4} h_{1} + t_{1} t_{4} h_{2} + t_{0} t_{3} h_{3} + t_{1} t_{2} h_{4}, \\  
    (c_4): && 0 = &~t_{4}^{2} h_{0} + t_{2} t_{3} h_{1} + t_{1} t_{3} h_{2} + t_{1} t_{2} h_{3} + t_{0} t_{4} h_{4}.
    \end{align*}
    These conditions translate to $h$ being an element in the kernel of some $5 \times 5$ matrix. One can see that this matrix is exactly the Hessian of $F$ evaluated in the point $(t_0:2t_1:2t_2:2t_3:2t_4)$. Hence, there exists a nontrivial element in the kernel, and generically this element is unique (as a projective point). 

    The explicit formulas in the statement can be deduced from the conditions $c_0, \dots, c_4$ by considering the cubic equations
    \begin{align*}
        (c_0t_0 + 2c_1t_1 - c_2t_2-c_3t_3-c_4t_4): && 0 =& h_0 (t_0^3 + 2 t_1^3) + 3 h_1 (t_0t_1^2 - t_2t_3t_4),\\
        (c_0t_0 + 2c_2t_2 - c_1t_1-c_3t_3-c_4t_4): && 0 =& h_0 (t_0^3 + 2 t_2^3) + 3 h_2 (t_0t_2^2 - t_1t_3t_4),\\
        (c_0t_0 + 2c_3t_3 - c_1t_1-c_2t_2-c_4t_4): && 0 =& h_0 (t_0^3 + 2 t_3^3) + 3 h_3 (t_0t_3^2 - t_1t_2t_4),\\
        (c_0t_0 + 2c_4t_4 - c_1t_1-c_2t_2-c_3t_3): && 0 =& h_0 (t_0^3 + 2 t_4^3) + 3 h_4 (t_0t_4^2 - t_1t_2t_3).
    \end{align*}
    We note that these correspond to the cubic relations of type $(d)$ in \cite[Theorem 4.1]{gunji2006defining} (see also \cite[Section 8]{gunji2006defining}).
\end{proof}

Proposition \ref{prop:t-to-h} defines a map $H(\B) \to \B$. We point out that this map is equal to the so-called {\em Steinerian map} from $H(\B)$ to the {\em Steinerian of $\B$} which (remarkbly so) is equal to $\B$ \cite[Section B.1.1.3]{hunt2006burkhardt}.

\subsection{Symplectic transformations} \label{subsec:symplectic-trafos}
	
	The coefficient vectors $(d,h)$ of a p.p. abelian surface in Hesse form $\A_{d,h}$ are not isomorphism invariants. Coefficient vectors defining isomorphic p.p. abelian surfaces are related by symplectic transformations of the underlying theta structure. These correspond to automorphisms of the Burkhardt quartic. 
	Here, we explain how the automorphisms of the Burkhardt quartic act on $\A_{d,h}$.
	
	\begin{notation} \label{not:scaling}
		For $\lambda = (\lambda_0: \dots : \lambda_4) \in \PP^4$, we denote by $\star$ the coordinate-wise scaling
		\[
		\lambda \star x = (\lambda_0 x_0: \dots:  \lambda_4 x_4) \quad \text{for } x \in \PP^4,
		\]
		and when all $\lambda_i \neq 0$, we write 
		\[
		\lambda^{-1} = (\lambda_0^{-1} : \dots : \lambda_4^{-1})
		\]
		for the coordinate-wise inverse. 
		
		We further extend the scaling map to $ x \in \A_{d,h} \subset \PP^8$:
		\begin{equation*}
			C_\lambda(x) = (\lambda_0 x_0 : \lambda_1 x_1 : \lambda_1 x_2 : \lambda_2 x_3 : \lambda_3 x_4 : \lambda_4 x_5: \lambda_2 x_6 : \lambda_4 x_7 : \lambda_3 x_8).
		\end{equation*}
		The scaling is symmetric in the sense that it commutes with multiplication by $[-1]$.
	\end{notation}
	
	Note that the image $C_\lambda(\A_{d,h})$ is not necessarily in Hesse form. In the next proposition, we provide values $\lambda \in \PP^4$ for which this is the case.
	
	\begin{proposition}\label{prop:symplectic-transformation-1}
		Let $\A_{d,h} \subset \PP^8$ be a p.p. abelian surface in Hesse form, and consider the scaling vectors
		\[
		a_0 = (1 : \omega : 1 : \omega : \omega), \quad 
		a_1 = (1 : 1 : 1 : \omega^2 : \omega), \quad 
		a_2 = (1 : 1 : \omega : \omega : \omega).
		\]
		Then for any $\lambda \in \{t_0,t_1,t_2\} $, the image $C_\lambda(\A_{d,h})$ is again in Hesse form, more precisely
		\[
		C_{\lambda}(\A_{d,h}) = \A_{\lambda^{-1} \star d, \lambda \star h}.
		\]
		%Furthermore, the identity element of $C_{\lambda}(\A_{d,h})$ is given by 
		%$0_{C_\lambda(\A_{d,h})} = C_\lambda (0_{\A_{d,h}})$.
		
		%\begin{align*} 
		%   s_0: (x_0: \dots: x_8) &\mapsto (\omega^2 x_0:  x_1 : x_2 : \omega^2 x_3 :  x_4 :  x_5 : \omega^2 x_6 :  x_7 :  x_8), \\
		%  s_1: (x_0 : \dots : x_8) & \mapsto (x_0: x_1 : x_2 : x_3 : \omega x_4 : \omega^2 x_5 : x_6 : \omega^2 x_7 : \omega x_8), \\
		%  s_2: (x_0 : \dots : x_8) & \mapsto (\omega^2 x_0: \omega^2 x_1 : \omega^2 x_2 : x_3 : x_4 : x_5 : x_6 : x_7 : x_8).\\
		%\end{align*}
		
		%These transformations preserve the Hesse form. More precisely:
		%\begin{itemize}
		%   \item $s_0(\A_{d,h}) = \A_{d,h'}$ with $d' = (\omega^2 d_0: d_1 : \omega^2 d_2 : d_3 : d_4)$, $h' = (h_0 : \omega h_1 : h_2 : \omega h_3 : \omega h_4)$. 
		%  \item $s_1(\A_{d,h}) = \A_{d,h'}$ with  $h' = (h_0: h_1 : h_2 : \omega h_3: \omega^2 h_4)$.
		% \item $s_2(\A_{d,h}) = \A_{d,h'}$ with  $h' = (h_0 : h_1 : \omega h_2 : \omega h_3 : \omega h_4)$. 
		%\end{itemize}
	\end{proposition}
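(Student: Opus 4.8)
The plan is a direct verification on the defining equations of Theorem~\ref{thm:defining-equations-dim2}, using the very special shape of the scaling vectors $a_0,a_1,a_2$. I would first record that $C_\lambda$ acts coordinate-wise as $C_\lambda(x)_j=\mu_j x_j$, where the factor attached to $x_j$ is constant on $[-1]$-orbits: $\mu_0=\lambda_0$, $\mu_1=\mu_2=\lambda_1$, $\mu_3=\mu_6=\lambda_2$, $\mu_4=\mu_8=\lambda_3$, $\mu_5=\mu_7=\lambda_4$. Since each $C_\lambda$ is a linear automorphism of $\PP^8$, in order to prove $C_\lambda(\A_{d,h})=\A_{d',h'}$ with $d'=\lambda^{-1}\star d$ and $h'=\lambda\star h$ it is enough to exhibit nonzero scalars $c_k,e_k$ with $F_k'\circ C_\lambda=c_k F_k$ for $1\le k\le 4$ and $G_k'\circ C_\lambda=e_k G_k$ for $0\le k\le 8$, where $F_k',G_k'$ are the polynomials of Theorem~\ref{thm:defining-equations-dim2} built from $d',h'$: once this holds, $C_\lambda$ identifies the ideal generated by the $F_k',G_k'$ with the one generated by the $F_k,G_k$, hence maps $\A_{d,h}$ onto $\A_{d',h'}$ as subsets of $\PP^8$.

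Next I would isolate the three features of $\lambda\in\{a_0,a_1,a_2\}$ that drive the computation: every $\lambda_j$ is a cube root of unity (so $\lambda_j^3=1$ and $\lambda_j^{-1}=\lambda_j^2$), $\lambda_0=1$, and $\lambda_1\lambda_2\lambda_3\lambda_4=1$; these are visible by inspection, and they identify $\lambda$ with one of the diagonal generators $S_0,S_1,S_2$ of $\Aut(\B)$ from \eqref{eq:automorphisms}. Combining them, for each $i\in\{1,2,3,4\}$ one obtains the single relation $\lambda_0\lambda_i^2=\prod_{j\neq 0,\,i}\lambda_j$, which is the only arithmetic fact used below.

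The verification then splits into cubics and quadrics. For the cubics, $\sum_{j=0}^8 X_j^3$ is $C_\lambda$-invariant because every $\mu_j^3=1$, and the three triple-product monomials of each $F_k$ are multiplied by one common scalar under $C_\lambda$ --- for $F_1$ this is $\lambda_0\lambda_1^2$ on $X_0X_1X_2$ and $\lambda_2\lambda_3\lambda_4$ on $X_3X_4X_5$ and $X_6X_7X_8$, equal by the relation above, and $F_2,F_3,F_4$ behave identically after relabelling coordinates. Inserting $d_0'=\lambda_0^{-1}d_0$, $d_k'=\lambda_k^{-1}d_k$ and using $\lambda_k^3=1$ once more yields $F_k'\circ C_\lambda=\lambda_k^{-1}F_k$. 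For the quadrics, write $G_k$ as a sum of five terms $h_m\,x_ax_b$; the matching term of $G_k'$ at $C_\lambda(x)$ is $\lambda_m h_m\,\mu_a\mu_b\,x_ax_b$, so the point is that $\lambda_m\mu_a\mu_b$ does not depend on the term. In $G_0$ the two indices of each monomial lie in one $[-1]$-orbit, so this scalar is $\lambda_j^3=1$ throughout; in each remaining $G_k$ the five scalars take only the two values $\lambda_0\lambda_i^2$ and $\prod_{j\neq 0,i}\lambda_j$ for the appropriate $i$, which coincide by the displayed relation. Hence $G_k'\circ C_\lambda=e_k G_k$ with $e_k\neq 0$, and together with the cubic identities this gives $C_\lambda(\A_{d,h})=\A_{\lambda^{-1}\star d,\,\lambda\star h}$.

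I do not anticipate a genuine obstacle; the work is the bookkeeping over the four cubics and nine quadrics. The two steps needing care are keeping track of which of the factors $\lambda_0,\dots,\lambda_4$ rescales which of the nine coordinates (equivalently, the $[-1]$-orbit decomposition of the index set), and checking that every per-equation consistency condition reduces to the single relation $\lambda_0\lambda_i^2=\prod_{j\neq 0,i}\lambda_j$ --- which holds for $a_0,a_1,a_2$ precisely because their entries are cube roots of unity with $\lambda_0=1$ and $\lambda_1\lambda_2\lambda_3\lambda_4=1$. Conceptually, for such $\lambda$ the map $C_\lambda$ realizes the action of a group of characters of $(\ZZ/3\ZZ)^2$ on the level-$3$ theta coordinates and so automatically sends $\A_{d,h}$ to an isomorphic model with the stated parameters; but the explicit check is self-contained and in the spirit of the surrounding section.
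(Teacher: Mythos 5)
Your proposal is correct and is exactly the ``explicit computation using Theorem~\ref{thm:defining-equations-dim2}'' that the paper's proof invokes without writing out: the reduction of every consistency condition to the single relation $\lambda_0\lambda_i^2=\prod_{j\neq 0,i}\lambda_j$ (valid for $a_0,a_1,a_2$ since their entries are cube roots of unity with $\lambda_0=1$ and $\lambda_1\lambda_2\lambda_3\lambda_4=1$) checks out against all four cubics and all nine quadrics. You also correctly read the statement's $\{t_0,t_1,t_2\}$ as the typo it is for $\{a_0,a_1,a_2\}$.
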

	
	\begin{proof}
		This can be verified by an explicit computation, using the description of $\A_{d,h}$ provided in Theorem \ref{thm:defining-equations-dim2}.
	\end{proof}
	
	We note that the transformations described in Proposition \ref{prop:symplectic-transformation-1} are (necessarily) automorphisms of the Burkhardt quartic. The action of $C_{\lambda_i}$ on $h$ corresponds to the element $S_i$ from Eq. \ref{eq:automorphisms} and as discussed in Subsection \ref{subsec:burkhardt-dual}, this extends to an action on the dual, i.e. the action on $d$.

	The automorphism described by the matrix $M$ (cf. Eq. \ref{eq:automorphisms}) can be interpreted as a discrete Fourier transform on $\A_{d,h}$. 
	
	\begin{definition} \label{def:DFT}
		Let 
		\begin{align*}
			 M_{3,3} =  \begin{pmatrix}
				M_3 & M_3 & M_3\\
				M_3 & \omega M_3 & \omega^2 M_3\\
				M_3 & \omega^2 M_3 & \omega M_3
			\end{pmatrix}, \quad \text{with }
			M_3 =  \begin{pmatrix}
				1 & 1 & 1\\
				1 & \omega & \omega^2 \\
				1 & \omega^2  & \omega 
			\end{pmatrix}.
		\end{align*}
		The {\em discrete Fourier transform} is defined as the left action of $M_{3,3}$ on the coordinate vector of a point $P =(x_0:\dots:x_8) \in \A_{d,h}$. We denote
		\[
		M_{3,3}: (x_0:\dots:x_8) \mapsto {M_{3,3}} \cdot (x_0: \dots: x_8).
		\]  
\end{definition}
	
\begin{proposition}\label{prop:dual-coordinates-dim2}
		Let $\A_{d,h}$ be a p.p. abelian surface in Hesse form. Then the discrete Fourier transform  defines an isomorphism $\A_{d, h} \to \A_{\tilde d,\tilde h}$ with
		\[
		\tilde d = M\cdot d, \quad {\tilde h} = h \cdot M,\quad \tilde t = M \cdot t.
		\]
\end{proposition}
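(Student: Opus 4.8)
The plan is to exploit that $M_{3,3}$ is a \emph{linear} automorphism of $\PP^8$, so that the proposition reduces to explicit identities relating the defining equations and the neutral element of $\A_{d,h}$ to those of $\A_{\tilde d,\tilde h}$. First I would record the structural facts about the matrix itself. Reading off the block form in Definition~\ref{def:DFT} one has $M_{3,3}=M_3\otimes M_3$, and since $\sum_{j}\omega^{j(i+k)}$ equals $3$ when $i+k\equiv 0\pmod 3$ and vanishes otherwise, $M_3^2=3P$, where $P$ is the permutation matrix swapping the indices $1,2$ and fixing $0$; hence $M_{3,3}^2=9\,(P\otimes P)$. Now $P\otimes P$ is precisely the matrix of multiplication by $[-1]$ in the model $\A_{d,h}$ (Subsection~\ref{subsec:two-torsion}), so $M_{3,3}$ is invertible with $M_{3,3}^{-1}=\tfrac19\,M_{3,3}\cdot(P\otimes P)$, and since $M_3P=PM_3$ the transformation $M_{3,3}$ commutes with $[-1]$. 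In particular $M_{3,3}$ is an isomorphism of $\PP^8$, so it carries the closed subvariety $\A_{d,h}$ isomorphically onto its image, and it remains only to identify that image.

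Write $G_i^{(h)}$ and $F_i^{(d)}$ for the quadrics and cubics of Theorem~\ref{thm:defining-equations-dim2} with the indicated coefficient vectors, and set $\tilde h:=h\cdot M$, $\tilde d:=M\cdot d$. By direct substitution I would check that each $G_j^{(\tilde h)}\circ M_{3,3}$ is a linear combination of the $G_i^{(h)}$ and each $F_i^{(\tilde d)}\circ M_{3,3}$ vanishes on $\A_{d,h}$, so that $M_{3,3}(\A_{d,h})\subseteq\A_{\tilde d,\tilde h}$; since $M_{3,3}$ is linear, $M_{3,3}(\A_{d,h})$ is an irreducible surface, and $\A_{\tilde d,\tilde h}$ is again an abelian surface in Hesse form (because $M$ preserves the Burkhardt quartic, its dual, and the pairing between them, so $(\tilde d,\tilde h)$ is again a valid coefficient pair), hence also irreducible of dimension $2$, and equality follows. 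The substitution is organised most cleanly in the ternary indexing $x_{ab}$, $a,b\in\ZZ/3\ZZ$, in which the Fourier transform reads $x_{ab}\mapsto\sum_{a',b'}\omega^{aa'+bb'}x_{a'b'}$: the common cubic term $\sum_j X_j^3$ and the four cubic monomial-sums appearing in $F_1,\dots,F_4$ span a $5$-dimensional space stable under the Fourier transform, with the matrix of this action having entries built from character sums over $(\ZZ/3\ZZ)^2$, and the quadratic monomials in $G_0,\dots,G_8$ are governed by the same character table; tracking the bookkeeping shows that the induced action on the coefficient vector $h$ is right-multiplication by $M$ and on $d$ is left-multiplication by $M$, in agreement with the two actions of $\Aut(\B)$ recalled in Subsections~\ref{subsec:burkhardt-dual} and~\ref{subsec:symplectic-trafos}. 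This yields the isomorphism $\A_{d,h}\to\A_{\tilde d,\tilde h}$ together with $\tilde d=M\cdot d$ and $\tilde h=h\cdot M$.

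It remains to verify $\tilde t=M\cdot t$. Because $M_{3,3}$ commutes with $[-1]$, it preserves the $5$-dimensional subspace of vectors of the symmetric shape of Equation~\ref{eq:neutral-element}; computing the images of the basis vectors of this subspace — namely $e_0$ and $e_1+e_2$, $e_3+e_6$, $e_4+e_8$, $e_5+e_7$ — and reading them off in the coordinates $(t_0:\dots:t_4)$ shows that the restriction of $M_{3,3}$ to this subspace is exactly the $5\times 5$ matrix $M$ of Equation~\ref{eq:automorphisms} (which, in hindsight, is why $M$ has that particular form). Since $M_{3,3}$ realises a symplectic transformation of the level-$3$ theta structure it sends the neutral element of $\A_{d,h}$ to that of $\A_{\tilde d,\tilde h}$, which lies in $U_1$ as the theta structures are even; hence its coordinates have the shape of Equation~\ref{eq:neutral-element} with parameter vector $M\cdot t$. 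The one genuinely laborious step is the equation-level substitution; once it is organised through the ternary indexing and the characters of $(\ZZ/3\ZZ)^2$ it becomes a finite mechanical check, and it is the concrete incarnation of the fact that $M$ is an automorphism of the Burkhardt quartic that extends to its dual.
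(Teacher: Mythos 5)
Your proposal is correct and follows essentially the same route as the paper, whose proof is simply the assertion that the claim "can be verified by an explicit computation, using the description of $\A_{d,h}$ provided in Theorem \ref{thm:defining-equations-dim2}." Your write-up usefully organises that computation (the factorisation $M_{3,3}=M_3\otimes M_3$, the identity $M_{3,3}^2=9(P\otimes P)$, commutation with $[-1]$, and the restriction of $M_{3,3}$ to the symmetric subspace being the $5\times 5$ matrix $M$), all of which checks out against the definitions in the paper.
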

	
\begin{proof}
	This can be verified by an explicit computation, using the description of $\A_{d,h}$ provided in Theorem \ref{thm:defining-equations-dim2}.	
\end{proof}
	
Similarly as in Proposition \ref{prop:symplectic-transformation-1}, the action of the discrete Fourier transform on $h$ (resp. $d$) corresponds to the action of $M$ from Eq. \ref{eq:automorphisms} on the Burkhardt quartic (resp. the dual of the Burkhardt quartic).

    \subsection{Product surfaces}\label{subsec:product}

The description of abelian surfaces in Hesse form in Theorem \ref{thm:defining-equations-dim2} includes product surfaces. Here, we study this special case in more detail. We recall that an elliptic curve in Hesse form is defined by a cubic equation
\begin{equation*}
    \H_d: d_1(X^3 + Y^3 + Z^3) = 3d_0 XYZ.
\end{equation*} for some $d = (d_0:d_1) \in \PP^1 \setminus\{1,\omega,\omega^2,\infty\}$ (see Eq. \ref{eq:ell-hesse}), and set the neutral element to $0_{\H_d} = (0:-1:1)$.

\begin{lemma} \label{lem:product-equation}
    Let $\H_{(d_0:d_1)}$, $\H_{(d_0':d_1')}$ be two elliptic curves in Hesse form. Further, let $A = \H_{(d_0:d_1)} \times \H_{(d_0':d_1')}$ and consider the Segre embedding 
    \begin{align*}
    \iota_S: A &\to \PP^8, \\
    \left((x:y:z), (x':y':z') \right) &\mapsto (xx':xy':xz':yx':yy':yz':zx':zy':zz').
    \end{align*}
    Then 
    \(
    \iota_S(A) = \A_{d,h}
    \), with
    \[d = (d_0d_0': d_0d_1': d_1d_0':d_1d_1':d_1d_1'),
    \quad 
    h = (0:0:0:-1:1).
    \]
    The neutral element is
    \[
    0_{\A_d} =  (0:0:0:0:1:-1:0:-1:1).
    \]
\end{lemma}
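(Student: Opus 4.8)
The plan is to reduce everything to a direct computation using the explicit defining equations of Theorem \ref{thm:defining-equations-dim2}, exploiting the fact that the Segre embedding of a product of Hesse cubics naturally tensors the relevant data. First I would write out the product relations explicitly: if $(x:y:z) \in \H_{(d_0:d_1)}$ and $(x':y':z') \in \H_{(d_0':d_1')}$, then multiplying the two cubic relations $d_1(x^3+y^3+z^3) = 3d_0 xyz$ and $d_1'(x'^3+y'^3+z'^3) = 3d_0' x'y'z'$, and also pairing one cubic with the trivial identity on the other factor, should produce exactly the four cubic relations $F_1,\dots,F_4$ with the claimed $d = (d_0d_0': d_0d_1': d_1d_0': d_1d_1': d_1d_1')$. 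The key bookkeeping point is that the monomials $X_0X_1X_2$, $X_3X_4X_5$, $X_6X_7X_8$ appearing in $F_1$ correspond under $\iota_S$ to $x^2\cdot x'y'z'$, $y^2 \cdot x'y'z'$, $z^2 \cdot x'y'z'$, so their sum is $(x^2+y^2+z^2)$... more precisely one checks $F_1 \circ \iota_S = d_1 \cdot (\text{something}) - 3 d_0 (x^2+y^2+z^2)(x'y'z')$ up to using the relation on the second factor; I would organize this as: substitute, group by which factor the ``cubic monomial'' lives in, and apply the Hesse relation of that factor. The analogous grouping for $F_2$ (monomials $X_0X_3X_6$ etc., which correspond to $xyz \cdot x'^2$, etc.) gives the coefficient $d_0 d_1'$, and $F_3, F_4$ both involve genuinely mixed monomials like $X_0X_4X_8 = xx'\cdot yy' \cdot zz'$ and reduce via $3d_0d_1'$-type combinations to give $d_1 d_1'$ in both slots — this is why the last two entries of $d$ coincide. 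I expect this to be the main obstacle: correctly matching the specific index triples in $F_3$ and $F_4$ (namely $\{048,156,237\}$ and $\{057,138,246\}$) to products of the three ``diagonal-type'' factorizations, and verifying they both collapse to $d_1 d_1'$; it is a finite but error-prone combinatorial check best done by hand on one triple and then cross-checked against the accompanying SageMath code.

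For the quadrics, I would verify $\iota_S(A) \subseteq V(G_0,\dots,G_8)$ directly by substitution: with $h = (0:0:0:-1:1)$, each $G_i$ becomes $-x_{a}x_{b} + x_{c}x_{d}$ for a specific index pair, and under $\iota_S$ each such difference is a product of two of the $3\times 3$ minors of the matrix $\begin{pmatrix} xx' & xy' & xz' \\ yx' & yy' & yz' \\ zx' & zy' & zz'\end{pmatrix}$, which is rank $1$, hence all its $2\times 2$ minors vanish; I would check that the eight index pairings arising from $G_0,\dots,G_8$ under $h=(0:0:0:-1:1)$ are exactly (differences of, or sums built from) such $2\times 2$ minors, so each $G_i \circ \iota_S \equiv 0$. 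Since $\iota_S(A)$ is an irreducible surface contained in the surface $V(F_1,\dots,F_4,G_0,\dots,G_8)$, and Theorem \ref{thm:defining-equations-dim2} tells us the latter is (for suitable $d,h$ on the Burkhardt/dual loci) exactly a p.p.\ abelian surface in Hesse form, a dimension/degree count forces equality $\iota_S(A) = \A_{d,h}$; alternatively one invokes that the data $(d,h)$ just computed satisfies the compatibility $\sum h_i d_i = 0$ (here $h_3 d_3 + h_4 d_4 = -d_1d_1' + d_1d_1' = 0$, consistent with Eq.~\ref{eq:d-on-Bd}) and that $h = (0:0:0:-1:1) \in \B$ is one of the type-$(a)$ singular points of the Burkhardt quartic, which is precisely where reducible surfaces sit.

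Finally, for the neutral element: $0_{\H_d} = (0:-1:1)$ in each factor, so $\iota_S(0_A, 0_A)$ has coordinates given by the outer product of $(0,-1,1)$ with itself, namely $(0\cdot0: 0\cdot(-1): 0\cdot1: (-1)\cdot 0: (-1)(-1): (-1)(1): 1\cdot 0: 1\cdot(-1): 1\cdot1) = (0:0:0:0:1:-1:0:-1:1)$, which is exactly the claimed $0_{\A_d}$; one then checks this lies in the set $U_1$ of Subsection \ref{subsec:two-torsion} (i.e.\ has the shape $(t_0:t_1:t_1:t_2:t_3:t_4:t_2:t_4:t_3)$ with $t = (0:0:0:1:-1)$), confirming the theta structure is even, as it should be for a product of two curves with the same (even) theta structure. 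I would present the whole thing as: (i) record the product relations and identify $d$; (ii) verify the quadrics via rank-one minors and identify $h$; (iii) conclude equality by irreducibility plus Theorem \ref{thm:defining-equations-dim2}; (iv) compute the neutral element from the Segre formula. The honest statement in the write-up would be that (i) and (ii) are routine but lengthy substitutions, carried out with the help of the accompanying code, with the conceptual content concentrated in step (iii).
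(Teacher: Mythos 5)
Your proposal follows essentially the same route as the paper's proof: verify the cubic relations by substituting the Segre coordinates and factoring out the Hesse equation of the appropriate factor, verify the quadrics as $2\times 2$ minors of the rank-one Segre matrix (which is exactly how $h=(0:0:0:-1:1)$ arises), and compute the neutral element as $\iota_S\bigl((0:-1:1),(0:-1:1)\bigr)$. Only two small slips to fix in the write-up: $X_0X_1X_2$ corresponds to $x^3\cdot x'y'z'$ (not $x^2\cdot x'y'z'$), so the grouped sum is $(x^3+y^3+z^3)(x'y'z')$, and each $G_i$ is itself a single $2\times 2$ minor of the $3\times 3$ rank-one matrix rather than a product of $3\times 3$ minors.
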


\begin{proof}
   We denote
    \[
    (x_0: \dots: x_8) = (xx':xy':xz':yx':yy':yz':zx':zy':zz')
    \]
    for the generic point on $\iota_S(A)$. It suffices to show that this point satisfies the equations from Theorem \ref{thm:defining-equations-dim2}.

    The quadratic relations are induced by the Segre embedding. More precisely, the image of $\iota_S(\PP^2 \times \PP^2) \subset \PP^8$ is defined by the zero set of 
    \[
    x_{3i+j}x_{3k+\ell} - x_{3i_\ell}x_{3k+j} \quad \text{for } i,j,k,\ell \in \{0,1,2\}. 
    \]
    These relations yield the $8$ quadrics from Theorem  \ref{thm:defining-equations-dim2} with coefficient vector $h = (0:0:0:-1:1)$.

    The cubic relations are induced by the equations for $\H_{(d_0:d_1)}$ and $\H_{(d_0':d_1')}$. To see this, we note that 
    \begin{align*}
    x_0^3 \quad + \quad \dots \quad+ \quad  x_8^3\quad =&~ (x^3 + y^3 + z^3) \cdot (x'^3 + y'^3 + z'^3),\\
    x_0x_1x_2 + x_3x_4x_5 + x_6x_7x_8= &~ (x^3+y^3+z^3) \cdot (x'y'z'),\\
    x_0x_3x_6 + x_1x_4 x_7 + x_2x_5x_8 = &~ (x'^3+y'^3+z'^3) \cdot (xyz),\\
    x_0x_4x_8 + x_1x_5x_6 + x_2x_3x_7 = &~ 3 \cdot (xyz) \cdot (x'y'z'),\\
    x_0x_5x_7 + x_1x_3x_8 + x_2x_4x_6 = &~ 3 \cdot (xyz) \cdot (x'y'z').\\
    \end{align*}
    Substituting these identities into the cubic relations from Theorem  \ref{thm:defining-equations-dim2} with coefficient vector 
    \[
    d = (d_0d_0': d_0d_1': d_1d_0':d_1d_1':d_1d_1')
    \]
    reduces to the defining equations of the elliptic curves.

    Finally, the neutral element is computed as $\iota_S\left((0:-1:1),(0:-1:1)\right)$
\end{proof}

Note that the neutral element is as in Equation \ref{eq:neutral-element} with $t_0 = t_1 = t_2 = 0$ and $t_3 = - t_4 = 1$. In particular it is an {\em even} $2$-torsion point.

\begin{comment}
\begin{lemma}[$3$-torsion on the product surface] 
\label{lem:product-torsion} 
    Let $A = H_d \times H_{d'}$ and $\iota_S: A\to \PP^8$ be the Segre embedding as in Lemma \ref{lem:product-equation}. 
    Then $\mathcal{B}_A = (P_1,P_2,Q_1,Q_2)$ with 
    \begin{align*}
    \iota_S(P_1) =& (0:0:0:1:-1:0:-1:1:0),\\
    \iota_S(P_2) =& (0:1:-1:0:-1:1:0:0:0),\\
    \iota_S(Q_1) =&  (0:0:0:0:1:-\omega:0:-1:\omega),\\
    \iota_S(Q_2) =& (0:0:0:0:1:-1: 0: -\omega: \omega), \\
    \end{align*}
    is a symplectic basis of the $3$-torsion. 
\end{lemma}

\begin{proof}

    Let $\mathcal{B} = (P,Q)$ with $P = (-1:1:0)$ , $Q = (0:1:-\omega)$ be a (necessarily symplectic) basis for $H_d[3]$ and similarly $\mathcal{B}' = (P',Q')$ with $P' = (-1:1:0)$, $Q' = (0:1:-\omega)$ a basis for $H_{d'}[3]$. Then a symplectic basis for $A = H_d \times H_d'$ is given by
    \[
    \mathcal{B}_A = \left(
    (0,P'), (P,0), (0,Q'), (Q,0)
    \right).
    \]
    Applying the Segre embedding, yields the coordinates stated in the lemma. 

    Alternatively, the basis may be computed as a corollary of Proposition \ref{prop:3-torsion}.
\end{proof}
\end{comment}

If a reducible abelian surface comes from the Segre embedding of two elliptic curves in Hesse form, then we say that it is in product form. More precisely:

\begin{definition} \label{def:product-structure}
    We say that a reducible principally polarized abelian surface in Hesse form $\A_{d,h}$ is equipped with the product polarization if and only if $h = (0:0:0:-1:1)$.
\end{definition}

Note that in general, for a reducible abelian surface, the quadratic equations in Theorem \ref{thm:3-isogeny-dim2} are defined by some coefficient vector $h$ which is a singular point on the Burkhardt quartic. In total, there are $45$ singular points. As explained in Lemma \ref{lem:singularities-burkhardt}, the automorphism group of the Burkhardt quartic acts transitively on these singular points. Together with our analysis on the action of symplectic transformations in Subsection \ref{subsec:symplectic-trafos}, this implies the following fact.

\begin{corollary} \label{cor:trafo-t-product}
    Let $\A_{d,h}$ be a {\em reducible} principally polarized abelian surface in Hesse form. Then there exists a symplectic transformation $T$ so that $T(\A_{d,h}) = \A_{d',h'}$ is equipped with the product polarization.
\end{corollary}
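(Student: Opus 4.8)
The statement combines the transitivity of $\Aut(\B)$ on $\Bs$ (Lemma \ref{lem:singularities-burkhardt}) with the fact that the generators of $\Aut(\B)$ are realized by symplectic transformations of $\A_{d,h}$ (Propositions \ref{prop:symplectic-transformation-1} and \ref{prop:dual-coordinates-dim2}). The plan is to set it up as follows.

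Since $\A_{d,h}$ is reducible, its coefficient vector $h$ is a singular point of the Burkhardt quartic, $h \in \Bs$; this is the observation preceding the corollary (see also \cite[Section 5.3.1]{hunt2006burkhardt}). The target vector $(0:0:0:-1:1)$ is itself a singular point: in the notation of Lemma \ref{lem:singularities-burkhardt} it is of type $(a)$ with $i = 0$ and $\sigma = \mathrm{id}$. By the transitivity of $\Aut(\B) = \langle M, S_0, S_1, S_2\rangle$ on $\Bs$, there is an element $g \in \langle M, S_0, S_1, S_2\rangle$ with $h \cdot g = (0:0:0:-1:1)$; concretely one may take the element computed in \cite{source_code} already used in the proof of Lemma \ref{lem:singularities-burkhardt}.

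Next, write $g = g_1 g_2 \cdots g_m$ as a word in the generators $M, S_0, S_1, S_2$. By Proposition \ref{prop:symplectic-transformation-1}, each generator $S_i$ is realized on $\A_{d,h}$ by the symplectic transformation $C_{a_i}$, which sends $\A_{d,h}$ to $\A_{a_i^{-1}\star d,\, a_i\star h}$, i.e. it acts on the $h$-coefficient by the right action of $S_i$; by Proposition \ref{prop:dual-coordinates-dim2}, the generator $M$ is realized by the discrete Fourier transform $M_{3,3}$, which acts on $h$ by right multiplication by $M$. Composing the corresponding maps in the order dictated by the word (the lift of $g_1$ first, then $g_2$, and so on) produces a symplectic transformation $T$ of $\PP^8$ with $T(\A_{d,h}) = \A_{d',h'}$, where $h' = h\cdot g = (0:0:0:-1:1)$ and $d'$ is the induced transform of $d$. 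Since $T$ is an isomorphism of abelian surfaces, $\A_{d',h'}$ is again reducible, so by Definition \ref{def:product-structure} it carries the product polarization, as desired.

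The only delicate point is bookkeeping with conventions: one must check that the action of $C_{a_i}$, resp.\ $M_{3,3}$, on the $h$-coefficient is the right action of $S_i$, resp.\ $M$, with the same convention as the right action of $\Aut(\B)$ on $\B$, so that composing the lifts of $g_1,\dots,g_m$ indeed realizes $h \mapsto h\cdot g$ rather than, say, $h\mapsto h\cdot g^{-1}$ or a conjugate. This is precisely the content of the remarks following Propositions \ref{prop:symplectic-transformation-1} and \ref{prop:dual-coordinates-dim2}; alternatively the argument can be made fully explicit by composing the concrete matrices $C_{a_i}$ and $M_{3,3}$ corresponding to the explicit word found in \cite{source_code}. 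Beyond this, no genuine obstacle arises, which is why the statement is phrased as a corollary.
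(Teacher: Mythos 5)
Your proposal is correct and follows exactly the paper's intended argument: the paper states this corollary as an immediate consequence of the preceding paragraph, which combines $h \in \Bs$, the transitivity of $\Aut(\B)$ on $\Bs$ from Lemma \ref{lem:singularities-burkhardt} (whose proof already produces an explicit $g$ with $h \cdot g = (0:0:0:-1:1)$), and the realization of the generators of $\Aut(\B)$ as symplectic transformations via Propositions \ref{prop:symplectic-transformation-1} and \ref{prop:dual-coordinates-dim2}. Your added care about the compatibility of the left/right action conventions is a reasonable refinement of the same argument, not a departure from it.
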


Note that Corollary \ref{cor:trafo-t-product} is effective. In our implementation,  we describe a concrete symplectic transformation into product form for each singularity of the Burkhardt quartic.
    \section{The $3$-torsion group} \label{sec:three-torsion}

Let $\A_{d,h}$ be a principally polarized abelian surface in Hesse form. 
Recall that the translation by  $3$-torsion points act as linear transformations on the coordinates of a point.  This means that the group $G = (\ZZ/3\ZZ)^2 \times (\ZZ/3\ZZ)^2$ acts on $\A_{d,h} \subset \PP^8$. This action is normalized, and we fix generators
\[
G \cong \langle \sigma_1, \sigma_2, \tau_1, \tau_2 \rangle,
\] with action on a point $P = (x_0: \dots : x_8) \in \A_{d,h}$ given by
\begin{align}
	\label{eq:action-by-3-torsion}
	\begin{split}
		\sigma_1(P) &= (x_1 : x_2: x_0 : x_4 : x_5 : x_3 : x_7 : x_8 : x_6),\\
		\sigma_2(P) &= (x_3 : x_4: x_5 : x_6 : x_7 : x_8 : x_0 : x_1 : x_2),\\
		\tau_1(P) & = (x_0 : \omega x_1 : \omega^2x_2 : x_3 : \omega x_4 : \omega^2 x_5 : x_6 : \omega x_7 : \omega^2 x_8),\\
		\tau_2(P) & = (x_0 : x_1 : x_2 : \omega x_3 : \omega x_4 : \omega x_5 : \omega^2x_6 : \omega^2x_7 : \omega^2x_8),
	\end{split}
\end{align}
That is  $\sigma_1$, $\sigma_2$ act as permutations; and $\tau_1$, $\tau_2$ act by scaling with third roots of unity. 

\begin{proposition} \label{prop:3-torsion}
	Let $\A_{d,h}$ be a principally polarized abelian surface in Hesse form, and let
    \[
    0_{\A_{d,h}} = (t_0 : t_1 : t_1 : t_2 : t_3: t_4 : t_2 : t_4 : t_3),
    \] be the neutral element. Then $(P_1,P_2,Q_1,Q_2)$ with
    \[
    P_1 = \sigma_1(0_{\A_{d,h}}),\quad P_2 = \sigma_2(0_{\A_{d,h}}), \quad Q_1 = \tau_1(0_{\A_{d,h}}),\quad Q_2 = \tau_2(0_{\A_{d,h}})
    \]
	%\begin{align*}
	%	P_1  =& (t_1 : t_1 : t_0:  t_3: t_4 : t_2 :  t_4 : t_3 : t_2)\\
	%	P_2  =& (t_2 : t_3 : t_4 : t_2 : t_4 : t_3 : t_0 : t_1 : t_1 )\\
	%	Q_1  =& (t_0 : \omega t_1 : \omega^2t_1 : t_2 : \omega t_3: \omega^2t_4 : t_2 : \omega t_4 : \omega^2 t_3)\\
	%	Q_2  =& (t_0 : t_1 : t_1 : \omega t_2 : \omega t_3: \omega t_4 : \omega^2 t_2 : \omega^2 t_4 : \omega^2 t_3)
	%\end{align*}
	is a symplectic basis of $\A_{d,h}[3]$. 
\end{proposition}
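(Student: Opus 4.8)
The plan is to use that, by definition of a symmetric (normalized) theta structure of level $3$, the linear $G$-action on $\A_{d,h}\subset\PP^8$ recorded in Equation \ref{eq:action-by-3-torsion} is the action of the theta group $\mathcal{G}(L^{\otimes 3})$ on the theta coordinates; in particular each matrix $\sigma_i,\tau_i$ is a lift of translation by a $3$-torsion point (see \cite{mumford1966equations}, and \cite{robert2021theory} for the passage to arbitrary base fields). Granting this, I must check three things: that $\sigma_i(0_{\A_{d,h}})$ and $\tau_i(0_{\A_{d,h}})$ are the $3$-torsion points realising those translations, that the four points generate $\A_{d,h}[3]$, and that the Weil pairing is symplectic on the resulting tuple.

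The first two points are formal. Since $G$ acts on $\A_{d,h}$ through the translation action of $\A_{d,h}[3]$, each $\sigma_i$ (resp.\ $\tau_i$) equals translation by a unique point $P_i$ (resp.\ $Q_i$) of $\A_{d,h}[3]$; evaluating at the neutral element gives $\sigma_i(0_{\A_{d,h}})=0_{\A_{d,h}}+P_i=P_i$ and $\tau_i(0_{\A_{d,h}})=Q_i$, and these coincide with the coordinate vectors obtained by substituting the neutral element from Equation \ref{eq:neutral-element} into Equation \ref{eq:action-by-3-torsion}. The map $G\to\A_{d,h}[3]$ sending a group element to the point by which it translates is an injective homomorphism between groups of the same order $3^4$, hence an isomorphism; therefore $P_1,P_2,Q_1,Q_2$ generate $\A_{d,h}[3]\cong(\ZZ/3\ZZ)^4$ and hence form a basis.

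For the symplectic property I would invoke the classical identification of the Weil pairing $e_3$ on $\A_{d,h}[3]$ with the commutator pairing of the theta group: if $\tilde g,\tilde g'$ are lifts of $T,T'\in\A_{d,h}[3]$ to $\mathcal{G}(L^{\otimes 3})$, then $\tilde g\tilde g'\tilde g^{-1}\tilde g'^{-1}=e_3(T,T')\cdot\Id$. As the matrices of Equation \ref{eq:action-by-3-torsion} are such lifts, all that remains is the (immediate) commutator computation: in the ternary indexing $x_{ij}$, the maps $\sigma_1$ and $\sigma_2$ shift the second and the first index, while $\tau_1$ and $\tau_2$ multiply by the characters $\omega^{j}$ and $\omega^{i}$. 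Hence $\sigma_1$ commutes with $\sigma_2$ and with $\tau_2$, $\tau_1$ commutes with $\tau_2$ and with $\sigma_2$, and $\sigma_1\tau_1=\omega\,\tau_1\sigma_1$, $\sigma_2\tau_2=\omega\,\tau_2\sigma_2$, so $[\sigma_1,\tau_1]=[\sigma_2,\tau_2]=\omega\cdot\Id$ and all remaining commutators are trivial. This yields $e_3(P_1,P_2)=e_3(Q_1,Q_2)=e_3(P_1,Q_2)=e_3(P_2,Q_1)=1$, while $e_3(P_1,Q_1)=e_3(P_2,Q_2)$ is a primitive cube root of unity, which is exactly the statement that $(P_1,P_2,Q_1,Q_2)$ is a symplectic basis.

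The computations are trivial; the only real content is the two structural inputs — that the normalized $G$-action is the translation action of $\A_{d,h}[3]$, and that the theta-group commutator computes the Weil pairing — both of which are standard for level-$3$ symmetric theta structures, so I would cite them rather than reprove them, remarking that "normalized" is precisely the hypothesis making Equation \ref{eq:action-by-3-torsion} the theta-group action. A self-contained alternative would be to verify directly from Theorem \ref{thm:defining-equations-dim2} that each $\sigma_i,\tau_i$ preserves $\A_{d,h}$ and that $G$ acts simply transitively on one of its orbits, but this is longer and less transparent.
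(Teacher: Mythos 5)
Your proposal is correct and follows essentially the same route as the paper: the paper likewise takes the basis property as immediate from the normalized action of $G$ on the coordinates and then verifies symplecticity by computing the commutator pairing of the matrix lifts $S_i,T_j$ (citing its identification with the Weil pairing), which is exactly your commutator calculation in ternary indices. The only difference is that you spell out the injectivity of $G\to\A_{d,h}[3]$ for the basis claim, a point the paper leaves implicit.
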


\begin{proof}
	By the definition of $\sigma_1,\sigma_2,\tau_1,\tau_2$, the points $P_1,P_2,Q_1,Q_2$ form a basis of  $\A_{d,h}[3]$.  To see that it is  symplectic, we evaluate the commutator pairing.\footnote{A formal definition of the pairing is given in \cite[p.227]{mumford-book} and it is shown to coincide with the Weil pairing, see also \cite[Section 6.1]{lubicz2012computing}.} For this purpose, we denote by $S_1,S_2,T_1,T_2$ the $9\times 9$ matrices which describe the action of $\sigma_1,\sigma_2,\tau_1,\tau_2$ on the coordinate vectors. We compute
	\[
	e_3(P_i,P_j) = \frac{S_iS_j}{S_jS_i} = 1, \quad e_3(Q_i,Q_j) = \frac{T_iT_j}{T_jT_i} = 1
	\]
	and 
	\[
	e_3(P_i, Q_j) = \frac{S_iT_j}{T_jS_i} = \begin{cases}
		\omega & \text{if } i = j,\\
		1 & \text{if } i\neq j.
	\end{cases}
	\]
\end{proof}

\begin{definition} \label{def:symplectic-decomposition}
	We call the basis $(P_1,P_2,Q_1,Q_2)$ the {\em canonical basis} of $\A_{d,h}[3]$, and define the {\em symplectic decomposition} $\A_{d,h}[3] = K_1 + K_2$ with $K_1 = \langle P_1, P_2 \rangle$ and $K_2 = \langle Q_1,Q_2 \rangle$.
\end{definition}
Note that $K_1$ and $K_2$ are maximal isotropic subgroups of $\A_{d,h}[3]$.

For product surfaces, the canonical symplectic basis is induced by the normalized bases of the elliptic curves in Hesse form. We make this precise in the following statement.
\begin{corollary} \label{cor:product-basis}
    Let $\H_d$ and $\H_d'$ be elliptic curves in Hesse form, and consider the Segre embedding
    \[
    \iota_S : \H_d \times \H_{d'} \to \A_{d,h} \subset \PP^8 
    \]
    as in Lemma \ref{lem:product-equation}. Further let $(P,Q)$ and $(P',Q')$ be the canonical bases of $\H_{d}[3]$ and $\H_{d'}[3]$, respectively, cf. Eq. \ref{eq:ell-hesse-torsion}. That is
    \[
    P = (-1:1:0), Q = (0:-\omega^2:1) \in \H_d,\quad P' = (-1:1:0), Q' = (0:-\omega^2:1) \in \H_{d'}.
    \]
    Then the canonical basis of $\A_{d,h}[3]$ is given by $(P_1,P_2,Q_1,Q_2)$ with
    \[
    P_1 = \iota_S(0, P'), \quad P_2 = \iota_S(P, 0), \quad Q_1 = \iota_S(0,Q'), \quad Q_2 = \iota_S(Q,0).
    \]
\end{corollary}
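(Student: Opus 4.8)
The plan is to read the canonical basis off Proposition~\ref{prop:3-torsion} and then reinterpret the four operators $\sigma_1,\sigma_2,\tau_1,\tau_2$ as translations on $\H_d\times\H_{d'}$, transported through the Segre embedding. By Proposition~\ref{prop:3-torsion} the canonical basis of $\A_{d,h}[3]$ is $\big(\sigma_1(0_{\A_{d,h}}),\sigma_2(0_{\A_{d,h}}),\tau_1(0_{\A_{d,h}}),\tau_2(0_{\A_{d,h}})\big)$, and by Lemma~\ref{lem:product-equation} we have $0_{\A_{d,h}}=\iota_S(0_{\H_d},0_{\H_{d'}})$ with $0_{\H_d}=0_{\H_{d'}}=(0:-1:1)$. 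So the whole statement reduces to matching each of the $9\times 9$ matrices in Eq.~\ref{eq:action-by-3-torsion} with a translation on the product surface.

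First I would record the equivariance of the Segre map. Indexing the coordinates of $\PP^8$ as $x_{3i+j}$ with $i$ the index of the first factor and $j$ the index of the second, $\iota_S$ sends $\big((r_i),(s_j)\big)\mapsto(r_is_j)$, so a pair $(g,g')$ of linear maps acting on the two $\PP^2$ factors is transported to the Kronecker product $g\otimes g'$ on $\PP^8$; consequently translation by $(T,T')$ on $\H_d\times\H_{d'}$ corresponds under $\iota_S$ to $g_T\otimes g_{T'}$, where $g_T$ is the linear map realizing translation by $T$ on a Hesse cubic. Next I would recall from the introduction (the matrices displayed before Eq.~\ref{eq:ell-hesse}, with the normalization of Eq.~\ref{eq:ell-hesse-torsion}) that translation by $P=(-1:1:0)$ is the cyclic permutation $\Gamma:(X:Y:Z)\mapsto(Y:Z:X)$ and translation by $Q=(0:-\omega^2:1)$ is $\Delta=\diag(1,\omega,\omega^2)$. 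A direct comparison with Eq.~\ref{eq:action-by-3-torsion} then gives $\sigma_2=\Gamma\otimes\mathrm{Id}$, $\sigma_1=\mathrm{Id}\otimes\Gamma$, $\tau_2=\Delta\otimes\mathrm{Id}$, $\tau_1=\mathrm{Id}\otimes\Delta$; that is, $\sigma_1,\tau_1$ act only on the second factor and $\sigma_2,\tau_2$ only on the first.

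Applying these identifications to $0_{\A_{d,h}}=\iota_S(0,0)$ yields $P_1=\sigma_1(0_{\A_{d,h}})=\iota_S(0,\,0+P')=\iota_S(0,P')$ and likewise $P_2=\iota_S(P,0)$, $Q_1=\iota_S(0,Q')$, $Q_2=\iota_S(Q,0)$, which is the claimed basis. Along the way I would check the harmless normalization that the generator $Q=(0:-\omega^2:1)$ used here is the same projective point as $(0:1:-\omega)$ and that, with neutral element $(0:-1:1)$, it is exactly the $3$-torsion point whose translation matrix is $\Delta$ — this is precisely the convention of Eq.~\ref{eq:ell-hesse-torsion}.

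The only real obstacle is bookkeeping rather than mathematics: one must keep straight that in the Segre embedding of Lemma~\ref{lem:product-equation} the first factor is the ``outer'' (block) index and the second factor the ``inner'' (within-block) index, so that the block-permutation $\sigma_2$ and the block-scaling $\tau_2$ pair with the \emph{first} curve while $\sigma_1,\tau_1$ pair with the second; reversing this convention would swap the roles of $\H_d$ and $\H_{d'}$ in the statement. Everything else is a routine comparison of explicit $9\times 9$ matrices against Eq.~\ref{eq:action-by-3-torsion}, together with the classical fact, recalled in the introduction, that translation by the canonical $3$-torsion generators of a Hesse cubic is given by the cyclic permutation and the diagonal scaling by third roots of unity.
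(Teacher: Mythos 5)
Your proposal is correct and follows the route the paper intends for this corollary: it is left as an immediate consequence of Proposition \ref{prop:3-torsion} and Lemma \ref{lem:product-equation}, obtained exactly as you do by identifying $\sigma_1,\sigma_2,\tau_1,\tau_2$ with the Kronecker products $\mathrm{Id}\otimes\Gamma$, $\Gamma\otimes\mathrm{Id}$, $\mathrm{Id}\otimes\Delta$, $\Delta\otimes\mathrm{Id}$ of the normalized translation matrices on the two Hesse cubics and applying them to $0_{\A_{d,h}}=\iota_S(0,0)$. Your bookkeeping of which factor carries the block index (so that $\sigma_1,\tau_1$ act on $\H_{d'}$ and $\sigma_2,\tau_2$ on $\H_d$) matches the ordering in the statement.
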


\subsection{Action of symplectic transformations on the $3$-torsion}
For a given $(3,3)$-subgroup $K$ of $\A_{d,h}$, one can consider the isogeny $\phi: \A_{d,h} \to \A_{d',h'}$ with kernel $K$. In Section \ref{sec:isogeny}, we provide explicit formulas for computing such an isogeny, but it requires that the input is a specific $(3,3)$-subgroup:  the group $K_2$ from the canonical symplectic decomposition (Definition \ref{def:symplectic-decomposition}). In order to transfer an arbitrary $(3,3)$-group $K$ to a group of the correct form, it is necessary to apply a suitable symplectic transformation. For this purpose, we study the action of symplectic transformations described in \ref{subsec:symplectic-trafos} on the $3$-torsion.

\begin{lemma} \label{lem:DFT-3torsion}
	Let $\A_{d,h}$ be a principally polarized abelian surface in Hesse form, and consider the discrete Fourier transform $\DFT: \A_{d,h} \to \A_{d',h'}$. Further, let $(P_1,P_2,Q_1,Q_2)$ and $(P_1', P_2', Q_1', Q_2')$ be the canonical bases of $\A_{d,h}$ and $\A_{d',h'}$, respectively. Then
	\[
	\DFT(P_1) = - Q_1', \quad  \DFT(P_2) = - Q_2', \quad \DFT(Q_1) = P_1', \quad \DFT(Q_2) =  P_2'.
	\]
	In particular $\DFT(K_1) = K_2'$ and $\DFT(K_2) =  K_1'$.
\end{lemma}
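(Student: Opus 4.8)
Here is how I would prove Lemma~\ref{lem:DFT-3torsion}.

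The plan is to reduce the lemma to a few commutation relations between the matrix $M_{3,3}$ and the coordinate actions of $\sigma_1,\sigma_2,\tau_1,\tau_2$ from Equation~\ref{eq:action-by-3-torsion}, combined with the $[-1]$-symmetry of the neutral element. First I would fix notation: let $S_1,S_2,T_1,T_2$ be the $9\times 9$ matrices describing $\sigma_1,\sigma_2,\tau_1,\tau_2$ (as in the proof of Proposition~\ref{prop:3-torsion}), and let $P_{-}$ be the permutation matrix of multiplication by $[-1]$; these matrices do not depend on the coefficient vectors, so they describe the corresponding maps on $\A_{d',h'}$ as well. Using the ternary indexing $x_{ij}$ with $(i,j)\in(\ZZ/3\ZZ)^2$, the block form of Definition~\ref{def:DFT} shows $M_{3,3}=M_3\otimes M_3$, with the first tensor factor acting on the index $i$ and the second on $j$; moreover $S_1$ and $T_1$ are the cyclic shift $X$ and the clock $Z=\diag(1,\omega,\omega^2)$ acting on $j$, $S_2$ and $T_2$ are $X$ and $Z$ acting on $i$, and $P_{-}$ is $(x_{ij})\mapsto(x_{-i,-j})$. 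From the standard identities $M_3 X M_3^{-1}=Z^{-1}$ and $M_3 Z M_3^{-1}=X$, one reads off
\[
M_{3,3}S_1=T_1^{-1}M_{3,3},\quad M_{3,3}S_2=T_2^{-1}M_{3,3},\quad M_{3,3}T_1=S_1M_{3,3},\quad M_{3,3}T_2=S_2M_{3,3}.
\]

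Next I would bring in Proposition~\ref{prop:dual-coordinates-dim2}. Let $w$ be the coordinate vector of $0_{\A_{d,h}}$ and set $v:=M_{3,3}\,w$. Since $\DFT$ is an isomorphism of principally polarized abelian surfaces, it carries $0_{\A_{d,h}}$ to $0_{\A_{d',h'}}$, so $v$ is, up to a scalar, the coordinate vector of $0_{\A_{d',h'}}$; by Equation~\ref{eq:neutral-element} this vector has the shape $v_{00}$, $v_{01}=v_{02}$, $v_{10}=v_{20}$, $v_{11}=v_{22}$, $v_{12}=v_{21}$, so in particular $P_{-}v=v$. Using $P_1=\sigma_1(0_{\A_{d,h}})$ and $Q_1=\tau_1(0_{\A_{d,h}})$ (Proposition~\ref{prop:3-torsion}) together with the relations above, the coordinate vector of $\DFT(P_1)$ is $M_{3,3}S_1\,w=(M_{3,3}S_1M_{3,3}^{-1})\,v=T_1^{-1}v$, whereas that of $-Q_1'$ is $P_{-}T_1 v$; since $P_{-}v=v$, a one-line check gives $(P_{-}T_1 v)_{ij}=\omega^{-j}v_{ij}=(T_1^{-1}v)_{ij}$, hence $\DFT(P_1)=-Q_1'$. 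The case $\DFT(P_2)=-Q_2'$ is the same computation with $i$ and $j$ interchanged. For the remaining two, the coordinate vector of $\DFT(Q_1)$ equals $M_{3,3}T_1\,w=(M_{3,3}T_1M_{3,3}^{-1})\,v=S_1 v$, which is exactly the coordinate vector of $P_1'=\sigma_1(0_{\A_{d',h'}})$ --- here the symmetry of $v$ is not even needed --- and likewise $\DFT(Q_2)=P_2'$. Finally $\DFT(K_1)=\langle -Q_1',-Q_2'\rangle=\langle Q_1',Q_2'\rangle=K_2'$ and $\DFT(K_2)=\langle P_1',P_2'\rangle=K_1'$.

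I do not expect a genuine obstacle; the one point that requires care is the bookkeeping of projective scalars and of the chosen normalizations --- one has to use consistently the concrete $9\times 9$ matrices of $\sigma_i,\tau_i,[-1]$ and of $M_{3,3}$, so that $M_{3,3}S_iM_{3,3}^{-1}$ really represents the relevant map on $\A_{d',h'}$, and one needs to know that $\DFT$ fixes the neutral element rather than sending it to some translate; both facts are already provided by Propositions~\ref{prop:3-torsion} and~\ref{prop:dual-coordinates-dim2} and by the description of the $[-1]$-action. Equivalently, the whole lemma amounts to checking the four matrix identities displayed above (pure Fourier-conjugation computations, valid over $\ZZ[1/3,\omega]$) together with the identity $P_{-}v=v$ for the neutral element, so it can also be settled by a short direct computation, which in addition makes the argument uniform over all $\A_{d,h}$ in Hesse form.
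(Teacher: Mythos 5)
Your proof is correct, and since the paper's own proof of this lemma is just the one-line remark ``this is verified by a direct computation,'' your argument is essentially a carefully organized version of that same computation: the conjugation identities $M_{3,3}S_iM_{3,3}^{-1}=T_i^{-1}$ and $M_{3,3}T_iM_{3,3}^{-1}=S_i$ coming from $M_{3,3}=M_3\otimes M_3$, together with the $[-1]$-symmetry $P_-v=v$ of the neutral element, are exactly what the direct check amounts to. The bookkeeping (ternary indexing, the identification of $S_i,T_i$ with shift and clock operators on the two indices, and the use of Proposition~\ref{prop:dual-coordinates-dim2} to identify $M_{3,3}w$ with the neutral element of the codomain) all checks out.
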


\begin{proof}
	This is verified by a direct computation.
\end{proof}

\begin{lemma} \label{lem:symplectic-3torsion}
	Let  $\A_{d,h}$ be a principally polarized abelian surface in Hesse form, and consider the symplectic transformations 
	\[
	C_{a_i}: \A_{d,h} \to C_{a_i}(\A_{d,h}), \quad \text{for } i = 0,1,2,
	\]
	from Proposition \ref{prop:symplectic-transformation-1}. 
	%Further, let $(P_1,P_2,Q_1,Q_2)$ and $(P_1^i,P_2^i,Q_1^i,Q_2^i)$ be the canonical bases of $\A_{d,h}$ and $C_{t_i}(\A_{d,h})$, respectively.
	Then these transformations map the canonical basis of $\A_{d,h}$ to the canonical basis of $C_{a_i}(\A_{d,h})$ as respectively
	\[
	\begin{pmatrix}
		1 & 0 & 1 & 0\\
		0 & 1 & 0 & 0\\
		0 & 0 & 1 & 0\\
		0 & 0 & 0 & 1
	\end{pmatrix},\quad 
	\begin{pmatrix}
		1 & 0 & 0 & 1\\
		0 & 1 & 1 & 0\\
		0 & 0 & 1 & 0\\
		0 & 0 & 0 & 1
	\end{pmatrix},\quad 
	\begin{pmatrix}
		1 & 0 & 0 & 0\\
		0 & 1 & 0 & 1\\
		0 & 0 & 1 & 0\\
		0 & 0 & 0 & 1
	\end{pmatrix}.
	\]
\end{lemma}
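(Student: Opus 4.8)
The plan is to argue exactly as in the proof of Lemma \ref{lem:DFT-3torsion}: reduce everything to an explicit finite computation with $9\times 9$ matrices. First I would recall that by Proposition \ref{prop:3-torsion}, the canonical basis of $\A_{d,h}$ is obtained by applying the four operators $\sigma_1,\sigma_2,\tau_1,\tau_2$ of Equation \ref{eq:action-by-3-torsion} to the neutral element $0_{\A_{d,h}}$; and by Proposition \ref{prop:symplectic-transformation-1}, the image $C_{a_i}(\A_{d,h})$ is again in Hesse form, with a neutral element that is the $C_{a_i}$-image of $0_{\A_{d,h}}$ (since $C_{a_i}$ is a linear isomorphism of the ambient $\PP^8$ carrying the abelian surface to the abelian surface, it sends the identity to the identity). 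So the canonical basis of $C_{a_i}(\A_{d,h})$ is obtained by applying $\sigma_1,\sigma_2,\tau_1,\tau_2$ to $C_{a_i}(0_{\A_{d,h}})$.

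The key step is then purely a matter of commutators. Writing $S_1,S_2,T_1,T_2$ for the $9\times 9$ matrices of $\sigma_1,\sigma_2,\tau_1,\tau_2$ and $D_i$ for the (diagonal) $9\times 9$ matrix of $C_{a_i}$ on $\PP^8$, the claim that $C_{a_i}$ sends the canonical basis of the source to a prescribed $\ZZ/3\ZZ$-linear combination of the canonical basis of the target amounts to the identities
\[
D_i S_1 \,0_{\A_{d,h}} = S_1^{c_{11}} S_2^{c_{21}} T_1^{c_{31}} T_2^{c_{41}}\, D_i\, 0_{\A_{d,h}},
\]
and the three analogous ones for $S_2, T_1, T_2$, where the exponents are read off the columns of the displayed matrices. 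Because translation by a $3$-torsion point is linear on $\PP^8$, these in turn follow from the \emph{matrix} identities $D_i S_1 = \zeta\, S_1^{c_{11}} S_2^{c_{21}} T_1^{c_{31}} T_2^{c_{41}} D_i$ up to a scalar $\zeta$ (a root of unity) — equivalently, from computing the conjugates $D_i^{-1} S_j D_i$ and $D_i^{-1} T_j D_i$ and expressing them, modulo scalars, in the Heisenberg group generated by $S_1,S_2,T_1,T_2$. Each $D_i$ is diagonal, so conjugation just multiplies the nonzero entries of the permutation matrices $S_1,S_2$ by ratios of the $a_i$-coordinates and leaves the diagonal matrices $T_1,T_2$ unchanged; the bookkeeping is short. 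For instance, for $a_0=(1:\omega:1:\omega:\omega)$ one checks $D_0^{-1}S_1 D_0$ differs from $S_1$ by the diagonal scaling pattern realized by $\tau_1$, giving the column $(1,0,1,0)^T$, while $S_2,T_1,T_2$ are essentially fixed; and similarly for $a_1,a_2$.

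I do not expect a genuine obstacle here — the statement is verified, as the authors note for the sibling lemma, "by a direct computation," and the SageMath package accompanying the paper presumably does exactly this. The only point requiring a little care is the scalar ambiguity: working in $\PP^8$ (and with projective coordinates on the $\PP^1$ or $\PP^4$ parameter spaces), one must track that the identities hold up to a unit, and separately confirm the \emph{signs} on the resulting basis vectors, exactly as the minus signs appear in Lemma \ref{lem:DFT-3torsion}. I would phrase the lemma's conclusion, as stated, with the transformation matrices taken over $\ZZ/3\ZZ$, so that sign issues are absorbed; the proof is then simply: apply $\sigma_\bullet,\tau_\bullet$ to $C_{a_i}(0_{\A_{d,h}})$, use the linearity of the $3$-torsion action to pull $C_{a_i}$ past the operators, and read off the three matrices by inspection of the conjugation formulas. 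This is routine and can safely be delegated to the source code, as the authors do elsewhere.
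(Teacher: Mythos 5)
Your approach matches the paper's: the proof there is literally ``verified by a direct computation'' (with a pointer to Freitag--Manni, Proposition 1), i.e., one applies $\sigma_1,\sigma_2,\tau_1,\tau_2$ to $C_{a_i}(0_{\A_{d,h}})$ and reads off the coefficients, which is exactly the finite check you organize via conjugating the Heisenberg generators by the diagonal matrix of $C_{a_i}$. One small caution: signs are \emph{not} ``absorbed'' by working over $\ZZ/3\ZZ$ (since $-1\equiv 2\neq 1$), so the sign bookkeeping you defer must genuinely be carried out, but you acknowledge this and it is part of the same routine computation.
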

\begin{proof}
	This is verfied by a direct computation; see also \cite[Proposition 1]{freitag2004burkhardt1}. 

\end{proof}

    \section{Abelian surfaces in twisted Hesse form} \label{sec:twisted-surface}

This section is a brief interlude describing twists of abelian surfaces in Hesse form. These will appear in the proof of our main theorem in Section \ref{thm:3-isogeny-dim2}. While we do not explore the construction in full detail, we remark that this can be viewed as the two-dimensional analogue of elliptic curves in {\em twisted Hesse form} introduced in \cite{bernstein2015twisted}.

\begin{definition} \label{def:twisted-surface}
    Let $\A_{d,h}$ be a principally polarized abelian surface in Hesse form, and let $\lambda = (\lambda_0: \dots : \lambda_4) \in \PP^4$. We call $C_\lambda(\A_{d,h})$ the {\em twist of $\A_{d,h}$ by $\lambda$}. 
\end{definition}

Note that twisting a surface in Hesse form does not affect the action of the $3$-torsion subgroup $K_2$. For future reference, we state this observation in the next lemma. 

\begin{lemma} \label{lem:twisted-3-torsion-action}
   Let $A$ be a p.p. abelian surface which is the twist of some p.p. abelian surface $\A_{d,h}$ in Hesse form, i.e. there exists $\lambda \in \PP^4_{\bar{k}}$ so that $A = C_\lambda(\A_{d,h})$. 

   If the base field $k$ contains a primitive root of unity $\omega$, then there are rational $3$-torsion points $Q_1, Q_2 \in A[3]$, and their action on points in $A$ is given by
   \[
    P + Q_1 = \tau_1(P), \quad P + Q_2 = \tau_2(P), \quad \text{for all } P \in A
   \]
   with $\tau_1,\tau_2$ as in Eq. \ref{eq:action-by-3-torsion}.
   Moreover multiplication by $-1$ is given by 
    \[
    \begin{tikzcd} [row sep = 15pt] 
    ~P = (x_0:x_1:x_2:x_3:x_4:x_5:x_6:x_7:x_8) \dar[maps to]\\
    -P = (x_0:x_2:x_1:x_6:x_8:x_7:x_3:x_5:x_4)
    \end{tikzcd}
    \]
    for all $P \in A$.
\end{lemma}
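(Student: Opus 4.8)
The plan is to reduce everything to the case of the untwisted surface $\A_{d,h}$, where the claims about the action of $\tau_1,\tau_2$ and of $[-1]$ are already known, and then to transfer this information through the twist isomorphism $C_\lambda$. The key observation is that $C_\lambda \colon \A_{d,h} \to A$ is an isomorphism of \emph{varieties} (it is the restriction of a coordinate-wise scaling, hence a linear automorphism of $\PP^8$), and that the scaling vector $\lambda$ only affects coordinates that are permuted among themselves by $\tau_1,\tau_2$ in a compatible way. Concretely, recall from Eq.~\ref{eq:action-by-3-torsion} that $\tau_1$ scales $(x_0,\dots,x_8)$ by $(1,\omega,\omega^2,1,\omega,\omega^2,1,\omega,\omega^2)$ and $\tau_2$ by $(1,1,1,\omega,\omega,\omega,\omega^2,\omega^2,\omega^2)$, i.e.\ both act as diagonal matrices $T_1,T_2$. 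Likewise, from Notation~\ref{not:scaling}, $C_\lambda$ is the diagonal matrix $D_\lambda = \diag(\lambda_0,\lambda_1,\lambda_1,\lambda_2,\lambda_3,\lambda_4,\lambda_2,\lambda_4,\lambda_3)$. Since diagonal matrices commute, $D_\lambda T_i = T_i D_\lambda$ for $i=1,2$, and similarly the permutation matrix $N$ realising $P \mapsto -P$ on $\A_{d,h}$ (the permutation $(1\,2)(3\,6)(4\,8)(5\,7)$ from Subsection~\ref{subsec:two-torsion}) commutes with $D_\lambda$ because $D_\lambda$ assigns equal scalars to the paired coordinates $x_1,x_2$; $x_3,x_6$; $x_4,x_8$; $x_5,x_7$.

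\textbf{Key steps.} First I would make precise the translation-by-$3$-torsion structure on $\A_{d,h}$: by Proposition~\ref{prop:3-torsion}, $Q_1^{0} := \tau_1(0_{\A_{d,h}})$ and $Q_2^{0} := \tau_2(0_{\A_{d,h}})$ are $3$-torsion points, and — since the level-$3$ theta structure normalises the action — translation by $Q_i^{0}$ on $\A_{d,h}$ is exactly the linear map $\tau_i$ (this is the content of the statement at the start of Section~\ref{sec:three-torsion} that the $G$-action is the action of the $3$-torsion by translation). Second, I would transport the group structure on the abelian variety $\A_{d,h}$ to $A = C_\lambda(\A_{d,h})$ via the variety isomorphism $C_\lambda$; this makes $A$ a p.p.\ abelian surface and $C_\lambda$ an isomorphism of abelian varieties once we declare $0_A := C_\lambda(0_{\A_{d,h}})$. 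Under this transport, translation by the point $Q_i := C_\lambda(Q_i^{0}) \in A[3]$ is the conjugated map $C_\lambda \circ \tau_i \circ C_\lambda^{-1}$, which by the commutation $D_\lambda T_i = T_i D_\lambda$ equals $\tau_i$ itself; and likewise $[-1]_A = C_\lambda \circ [-1]_{\A_{d,h}} \circ C_\lambda^{-1} = C_\lambda \circ N \circ C_\lambda^{-1} = N$, giving the stated formula for $-P$. Third, I would check rationality: if $\omega \in k$ then the matrices $T_1,T_2,N$ have entries in $k$, so the points $Q_1,Q_2$ are fixed by $\mathrm{Gal}(\bar k/k)$ and hence rational (more carefully: the claim is that $A$ itself — together with its polarisation — is defined over $k$ even though $\lambda$ need not be, which is exactly the definition of a twist; granting that, the $Q_i$ are cut out on $A$ by the fixed locus computation and the rationality of $\tau_i$ forces $Q_i \in A(k)$).

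\textbf{Main obstacle.} The only genuinely delicate point is the rationality argument in the last step: a priori $\lambda \in \PP^4_{\bar k}$, so $C_\lambda$ and the individual points $Q_i^{0}, 0_{\A_{d,h}}$ may only be defined over $\bar k$, yet we want $Q_1, Q_2 \in A[3]$ to be $k$-rational. The resolution is that the twist $A = C_\lambda(\A_{d,h})$, by its definition in Definition~\ref{def:twisted-surface} and the surrounding discussion, is required to be a p.p.\ abelian surface over $k$; the twisting cocycle lands in the automorphism group generated by the diagonal scalings, and the points $Q_1,Q_2$ are characterised intrinsically on $A$ as the unique $3$-torsion points whose translation action is given by the $k$-rational diagonal matrices $\tau_1,\tau_2$. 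Since Galois acts on $A[3]$ compatibly with this intrinsic characterisation and fixes $\tau_1,\tau_2$ (their entries lying in $k$ by hypothesis $\omega\in k$), it fixes $Q_1,Q_2$, so these are rational. I would spell this out by noting that any two $3$-torsion points inducing the same translation map coincide, hence $Q_i$ is Galois-stable; everything else is the routine commutation of diagonal matrices sketched above and needs no further computation.
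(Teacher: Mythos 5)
Your proposal is correct and follows essentially the same route as the paper's proof: both arguments rest on the observation that $C_\lambda$ is a diagonal scaling, hence commutes with the diagonal maps $\tau_1,\tau_2$ giving the translation action of $\tilde Q_1,\tilde Q_2$, and commutes with the $[-1]$ permutation because the scaling is symmetric on the paired coordinates. Your additional Galois-descent argument for the rationality of $Q_1,Q_2$ is a sound elaboration of a point the paper's proof leaves implicit.
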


\begin{proof}
    Consider the canonical basis $(\tilde{P}_1, \tilde{P}_2, \tilde{Q}_1, \tilde{Q}_2)$ of $\A_{d,h}[3]$. The map $C_\lambda$ is given by scaling the coordinates of a point. The same is true for the action of the $3$-torsion points $\tilde{Q}_1$ and $\tilde{Q}_2$ (even though in the latter case, the scaling is (necessarily) not symmetric with respect to multiplication by $[-1]$). Consequently the action of these $3$-torsion points  commutes with the application of the map $C_\lambda$. This implies that the points $Q_1 = C_{\lambda}(\tilde{Q}_1)$ and $Q_2 = C_{\lambda}(\tilde{Q}_2)$ in $A = C_\lambda(\A_{d,h})$ have the properties described in the lemma. 

    The translation of the multiplication by $[-1]$ map  via $C_\lambda$ is straightforward.
\end{proof}

We note that it is in general not true that the abelian surface from Lemma \ref{lem:twisted-3-torsion-action} has full rational $3$-torsion. This is only the case if the twist is defined by a $k$-rational element $\lambda \in \PP^4$. 
    
\section{$(3,3)$-isogenies} \label{sec:isogeny}

Let $\A_{d,h}$ be a principally polarized abelian surface in Hesse form. We consider the symplectic decomposition $\A_{d,h}[3] = K_1 + K_2$ (cf. Definition \ref{def:symplectic-decomposition}) with maximal $3$-isotropic groups $K_1,K_2$.  

\begin{notation} \label{not:basic-operations}
    We recall our notation for the following basic operations on points $x = (x_0: \dots : x_8) \in \A_{d,h}$.
    \begin{itemize}
    \item $\cube$:  coordinate-wise cubing, i.e. $\cube(x) = (x_0^3 : \dots : x_8^3)$
    \item $M_{3,3}$: The discrete Fourier transform (cf. Definition \ref{def:DFT}).
    \item $C_{\lambda}$ with $\lambda = (\lambda_0: \dots : \lambda_4)$: coordinate-wise scaling (cf. Notation \ref{not:scaling}).
    \end{itemize}
\end{notation}

Our main result states that a $3$-isogeny of abelian surfaces in Hesse form may be computed as the composition of the basic operations from Notation \ref{not:basic-operations}, and provides an explicit formula for the scaling vector $\lambda \in \PP^4$ in terms of certain $9$-torsion points lying above the kernel.

\begin{lemma} \label{lem:3-isogeny-form}
	Let $\A_{d,h}$ be a p.p. abelian surface in Hesse form, and denote $(P_1,P_2,Q_1,Q_2)$ for the canonical $3$-torsion basis.
	Then there exists a scaling vector $\lambda \in \PP^4$ so that
	\[
	\phi: \A_{d,h} \to \A_{d',h'}, \quad P \mapsto C_\lambda \circ M \circ \cube(P)
	\]
	defines a $(3,3)$-isogeny with kernel $K_2 = \langle Q_1, Q_2\rangle$ and the codomain is again in Hesse form. 
\end{lemma}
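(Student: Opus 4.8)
The plan is to decompose the map into its three constituent pieces and track what each one does, both to the defining equations and to the kernel. First I would observe that $\cube: \A_{d,h} \to \PP^8$, coordinate-wise cubing, has the effect of collapsing the $3$-torsion action by scaling: since $Q_1 = \tau_1(0_{\A_{d,h}})$ and $Q_2 = \tau_2(0_{\A_{d,h}})$ act on coordinates by multiplication by powers of $\omega$ (Eq.~\ref{eq:action-by-3-torsion}), and since $\omega^3 = 1$, the composite $\cube \circ \tau_i = \cube$ as maps on $\PP^8$. Hence $\cube$ is constant on the cosets of $K_2 = \langle Q_1, Q_2\rangle$, so it factors through the quotient $\A_{d,h}/K_2$; this shows $\cube$ contracts precisely the subgroup $K_2$. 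The image of $\cube$ must then be (a model of) the quotient abelian surface. Dually, I would note that $\cube$ intertwines the permutation action of $\sigma_1,\sigma_2$ (i.e. the $K_1$-translations) with the same permutation action on the cubed coordinates, so $K_1$ survives as a $(3,3)$-subgroup of the image.

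Next I would identify the image $\cube(\A_{d,h})$ with an explicit Hesse-type model. The key computation: substitute $X_j \mapsto x_j^3$ into the linear span of the quadrics $G_0,\dots,G_8$ and the cubics $F_1,\dots,F_4$, and check — using the description in Theorem \ref{thm:defining-equations-dim2} — that the resulting variety is defined by equations of the same shape (cubics of the $F$-type and quadrics of the $G$-type) for some new coefficient vectors, possibly after a linear change of coordinates. Here the discrete Fourier transform $M_{3,3}$ enters: applying $M_{3,3}$ to the cubed point is exactly the linear map needed to put $\cube(\A_{d,h})$ back into the standard Hesse normal form. The cleanest way to see this is via the interpretation already developed in the paper: $M_{3,3}$ is the symplectic transformation realizing the automorphism $M$ of the Burkhardt quartic (Proposition \ref{prop:dual-coordinates-dim2}), and on the level-$3$ theta functions, cubing followed by the Fourier transform is the classical isogeny formula (the analogue of squaring-then-Hadamard in the level-$2$ story, cf.\ the comparison with \cite{gaudry2007fast,damien-notes}). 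So after $M_{3,3} \circ \cube$ we land on a variety which is projectively equivalent to an abelian surface in Hesse form, and the residual ambiguity is exactly a coordinate-wise scaling — which is what $C_\lambda$ absorbs. By Proposition \ref{prop:symplectic-transformation-1}, $C_\lambda$ applied to a Hesse-form surface with the symmetric scaling pattern is again in Hesse form, so the codomain $\A_{d',h'}$ is genuinely of the claimed type for a suitable $\lambda$; at this stage I only need to assert \emph{existence} of such a $\lambda$, since the explicit formula is the content of the main theorem.

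Finally I would assemble the pieces: $\phi = C_\lambda \circ M_{3,3} \circ \cube$ is a morphism of projective varieties; its fibres over the identity are the cosets of $K_2$ (from the first paragraph, noting that $M_{3,3}$ and $C_\lambda$ are isomorphisms and do not change fibres), so $\ker\phi = K_2$, a maximal $3$-isotropic subgroup; and since $\A_{d,h}/K_2$ is again a p.p.\ abelian surface (the polarization descends because $K_2$ is isotropic for the Weil pairing, by Proposition \ref{prop:3-torsion}), the map $\phi$ is a $(3,3)$-isogeny onto its image, which we have identified as a Hesse-form surface. I expect the main obstacle to be the middle step: verifying \emph{by direct substitution} that $M_{3,3} \circ \cube$ carries the Gunji equations of $\A_{d,h}$ to equations of the same structural form (up to the scaling $C_\lambda$), rather than to some more complicated system. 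This is a finite but delicate computation with the $13$ defining polynomials, and the honest version of the proof will either carry it out explicitly or cite the theta-function identities from \cite{birkenhake1990cubic,gunji2006defining} that make it automatic; everything else (kernel identification, isotropy, descent of the polarization, preservation of Hesse form under $C_\lambda$) follows from results already established in the excerpt.
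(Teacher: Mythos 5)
Your decomposition, your identification of $\ker(\cube)$ with $K_2$ via the $\omega$-scaling action of $\tau_1,\tau_2$, and your closing assembly (isotropy, descent of the polarization, fibres unchanged by the linear maps) all match the paper's proof. The divergence is in the step you yourself flag as the main obstacle, and there your plan is both harder than necessary and not quite well-posed: you propose to verify ``by direct substitution'' that $M_{3,3}\circ\cube$ carries the thirteen Gunji equations to equations of the same shape up to a scaling. But to run that substitution you would already need candidate coefficient vectors $d',h'$ and a candidate $\lambda$ (pulling back the target equations is the only direction in which substitution makes sense; computing the image directly is an elimination problem), so as an \emph{existence} proof for $\lambda$ the computation is circular, and in any case the paper never touches the defining equations at this stage.

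What the paper does instead is a short rigidity argument that you are missing. One checks symbolically that on $A_2 = (M_{3,3}\circ\cube)(\A_{d,h})$ the translations by the images of $P_1,P_2$ act as $\tau_1^2,\tau_2^2$ (i.e.\ already in the normalized diagonal form of Eq.~\ref{eq:action-by-3-torsion}) and that $[-1]$ acts by the standard permutation. The general theory guarantees \emph{some} linear isomorphism $F:A_2\to\A_{d',h'}$ onto a surface in Hesse form; normalizing $F$ so that it sends these two translates to $[-1]Q_1'$, $[-1]Q_2'$, the map $F$ must commute with the diagonal matrices representing $\tau_1,\tau_2$, which forces $F$ itself to be diagonal, and compatibility with $[-1]$ then forces the symmetric pattern, i.e.\ $F=C_\lambda$ for some $\lambda\in\PP^4$. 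This replaces your ``finite but delicate computation with the $13$ defining polynomials'' by a one-line commutation argument, and it is precisely the content of the twisted-Hesse-form discussion (Lemma~\ref{lem:twisted-3-torsion-action}) that the paper sets up beforehand. I would not call your outline wrong, but as written the lemma's central claim --- that a coordinate-wise symmetric scaling suffices to reach Hesse form --- is asserted rather than proved, and the proposed route to proving it would not go through as described.
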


\begin{proof} 	
	Throughout the proof, we use the following notation for the intermediate abelian surfaces and maps appearing in the computation of $\phi: \A_{d,h} \to \A_{d',h'}$.
	
	\[
	\begin{tikzcd}
		\A_{d,h} \rar{\cube} \arrow[bend right = 70, "\phi_1"]{r}  \arrow[bend right = 70, "\phi_2"]{rr}  & A_1 \rar{M_{3,3}} & A_2 \rar{C_\lambda} & \A_{d',h'}.
	\end{tikzcd}
	\]
	
	First, we note that the map $\phi_1: \A_{d,h} \to A_1$ is an isogeny, and its kernel is given by \[
	\ker(\cube) = \{(x_0:\dots:x_8) \mid (x_0^3 : \dots : x_8^3) = (t_0^3: \dots: t_3^3)\}
	\]  
	which is precisely $K_2=\langle Q_1,Q_2 \rangle$. It remains to show that there exists $\lambda \in \PP^4$ so that applying the linear transformations $M_{3,3}$ and $C_\lambda$ converts the codomain $A_1$ into an abelian surface in Hesse form $A_{d',h'}$ for some $d',h'$.

	We first study the image of the group $K_1 = \langle P_1, P_2 \rangle$ under  $\phi_2$ and its action on the points of ${A}_2$. 
%	 The image of the neutral element is given by
%	\[
%	\phi_2(0_{\A_{d,h}}) = (\tilde{t}_0:\tilde{t}_1:\tilde{t}_1:\tilde{t}_2:\tilde{t}_3:\tilde{t}_4:\tilde{t}_2:\tilde{t}_4:\tilde{t}_3), 
%	\]
%	where
%	\[
%	\tilde{t} = \begin{pmatrix} \tilde{t}_0\\ \vdots \\ \tilde{t}_4 \end{pmatrix} = {M} \cdot \begin{pmatrix} t_1^3\\ \vdots \\ t_4^3 \end{pmatrix}
%	\]
%	with ${M}$ as in Eq. \ref{eq:automorphisms}.
%	
	The action of the $3$-torsion points $P_1,P_2$ on points in $\A_{d,h}$ translates to ${A}_2$. Symbolically evaluating the generic point and its translates by $P_1$ and $P_2$, respectively, we find that  
	\[
	P + \phi_2({P}_1) = \tau_1^2(P), \quad P + \phi_2({P}_2) = \tau_2^2(P) \quad \text{ for all } P \in {A_2},
	\]
	where $\tau_1,\tau_2$ are the linear transformations described in Section \ref{sec:three-torsion}.  Further note that the description of the multiplication by $[-1]$ map on $A_2$ coincides with that on $\A_{d,h}$.  These are precisely the properties of twisted abelian surfaces described in Lemma \ref{lem:twisted-3-torsion-action}. However, it remains to show that $A_2$ is indeed a twisted abelian surface.
	
	We know that there exists a linear transformation $F:{A}_2 \to \A_{d',h'}$ for some $d',h'$ which is not necessarily defined over the base field. Denote $(P_1',P_2',Q_1',Q_2')$ for the canonical symplectic basis of $\A_{d',h'}[3]$. Without loss of generality, we may assume that $(F \circ\phi_2)({P}_1) = [-1] Q_1'$ and $(F\circ \phi_2)(\tilde{P}_2) = [-1]Q_2'$. Since the action of $[-1]\phi_2({P}_1)$ and $[-1]\phi_2({P}_2)$ is already in canonical form, this further implies $F(a\phi_2({P}_1) +b  \phi_2({P}_2)) = -aQ_1' - bQ_2'$ for all $a,b \in \{0,1,2\}$. In other words, the application of $F$ commutes with the action of $\tau_1$ and $\tau_2$. This implies that $F$ is represented by a diagonal matrix.  Furthermore, $F$ needs to be compatible with multiplication by $[-1]$, hence $F = C_{\lambda}$ for some $\lambda \in \PP^4$.
	
	%Let $P = (x_0:\dots:x_8) \in A_2$ be a generic point. The nine points given by 
	%\[
	%\left(P + a\phi_2(P_1) + b\phi_2(P_2) \right)_{a,b \in \{0,1,2\}} \subset \PP^8
	%\]
	%are in general position. Concretely, they are the columns of $(X \cdot M_{3,3})$, where $X = \diag(x_0,\dots,x_8)$, and $\det(M_{3,3}) = -3^9$. 
	
	%The image of the points under $F$ is of a similar form. Writing $F(P) = P' = (x_0': \dots: x_8')$, and $X' = \diag(x_0' , \dots, x_8')$, we have that 
	%\[
	%F \cdot X \cdot M  = X' \cdot M \qquad \text{in } \mathrm{PGL}_8(\bar{k}).
	%\]
	%It follows that $F$ is represented by a  diagonal matrix,  hence it is given by scaling the coordinates of a point in $A_2$. 
\end{proof}

In the next theorem, we provide an explicit formula for $\lambda$. We show that the correct scaling can be deduced from the knowledge of a $9$-torsion subgroup on the domain.
 First, we provide a formula that only depends on the coordinates of two generators $R_1,R_2$ of this $9$-torsion group (Theorem \ref{thm:3-isogeny-dim2}). In certain exceptional cases, it is necessary to work with $R_1 +R_2$ and $R_1 -R_2$ as additional input (Proposition \ref{prop:isogeny-formula-exceptional}). 

\begin{definition}\label{def:exceptional}
    Let $\A_{d,h}$ be a p.p. abelian surface in Hesse form, and $0_{\A_{d,h}} = (t_0: \dots : t_3) \in \PP^8$ the neutral element, and define the following values 
    \begin{align*}
    \tilde{t}_0 =&~ t_0^3 + 2t_1^3 + 2t_2^3 + 2t_3^3 + 2t_4^3,\\
    \tilde{t}_1 =&~  t_0^3 - t_1^3 + 2t_2^3 - t_3^3 -t_4^3,\\
    \tilde{t}_2 = &~ t_0^3 + 2  t_1^3 - t_2^3 - t_3^3 -t_4^3,\\
    \tilde{t}_3 = &~ t_0^3 - t_1^3 - t_2^3 - t_3^3  +2 t_4^3,\\
    \tilde{t}_4 = &~ t_0^3 - t_1^3 - t_2^3 +2 t_3^3 - t_4^3.
    \end{align*}
    We say that $\A_{d,h}$ is {\em exceptional} if there is an index $i>0$ with $\tilde{t}_i = 0$.
\end{definition}
Note that by definition $\A_{d,h}$ is not necessarily exceptional if $\tilde{t}_0 = 0$. Nevertheless, we included this value in the definition, since it will appear later on.

\begin{theorem} \label{thm:3-isogeny-dim2}
    Let $\A_{d,h}$ be a p.p. abelian surface in Hesse form, and assume that $\A_{d,h}$ is not exceptional. Denote $(P_1,P_2,Q_1,Q_2)$ for the canonical $3$-torsion basis, and let 
    \(
    \langle R_1, \; R_2\rangle \subset \A_{d,h}[9]
    \) be a maximal isotropic subgroup with $3 \cdot R_1  =  Q_1$ and $3 \cdot R_2  =  Q_2$. We denote
    \[
    (x_{i,0}:\dots: x_{i,8}) = M \circ \cube(R_i) \quad \text{for } i =1,2.
    \]
    and set 
    \begin{align*}
    (\lambda_0 : \lambda_1) =&~ (x_{1,1} :  x_{1,0}),
    \\
     (\lambda_0 : \lambda_2) =&~ (x_{2,3} :  x_{2,0}),
     \\
    (\lambda_2 : \lambda_3) =&~ (x_{1,4}: x_{1,6}),
    \\ 
    (\lambda_3 : \lambda_4) =&~ (x_{1,5} : x_{1,8}).  
    \end{align*}
    
    Then 
    \[
    \phi: \A_{d,h} \to \A_{d',h'}, \quad P \mapsto C_\lambda \circ M \circ \cube(P)
    \]
    defines a $(3,3)$-isogeny with kernel $K_2 = \langle Q_1, Q_2\rangle$ such that the codomain is again in Hesse form. Moreover, the canonical symplectic basis for $\A_{d',h'}[3]$ is given by  $(P_1',P_2',Q_1',Q_2')$, where $P_1' = \phi(R_1), \, P_2' = \phi(R_2)$ and $Q_1' = [-1]\phi(P_1), \, Q_2' = [-1]\phi(P_2)$. 
\end{theorem}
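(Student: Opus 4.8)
The plan is to build on Lemma~\ref{lem:3-isogeny-form}, which already establishes that the composition $C_\lambda\circ M\circ\cube$ is a $(3,3)$-isogeny with kernel $K_2$ and codomain in Hesse form \emph{for some} scaling vector $\lambda\in\PP^4$; the only remaining task is to pin down $\lambda$ explicitly from the $9$-torsion data $(R_1,R_2)$. So the core of the argument is a normalization/uniqueness statement: among all $\lambda$ that turn $A_2$ back into Hesse form, there is essentially a unique one compatible with a prescribed choice of canonical basis on the codomain, and this $\lambda$ is what the stated formula computes.

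First I would recall from the proof of Lemma~\ref{lem:3-isogeny-form} that the linear map $F = C_\lambda\colon A_2\to\A_{d',h'}$ is characterized by the requirement that it send $\phi_2(P_1),\phi_2(P_2)$ to $-Q_1',-Q_2'$ (after fixing which diagonal representative of the codomain's theta structure we land in). Concretely, on $A_2$ the points $\phi_2(P_1),\phi_2(P_2)$ act as $\tau_1^2,\tau_2^2$, and on the Hesse-form target $Q_1',Q_2'$ act as $\tau_1,\tau_2$; a diagonal matrix intertwining these actions is determined up to the residual diagonal symmetries, i.e.\ up to the group $\langle S_0,S_1,S_2\rangle$ of Eq.~\eqref{eq:automorphisms} together with global scaling. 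To kill this ambiguity I would use the $9$-torsion: since $3R_i = Q_i$, the image $\phi(R_i) = C_\lambda(M\circ\cube(R_i))$ must be a $3$-torsion point of $\A_{d',h'}$ lying above $\phi(Q_i) = 0$ under multiplication-by-$3$ — but more importantly, it must equal $P_i'$, the $i$-th permutation-type generator of the canonical basis of $\A_{d',h'}[3]$. Writing $P_i'=\sigma_i(0_{\A_{d',h'}})$ and matching a few coordinates of $C_\lambda(x_{i,\bullet})$ against the known coordinate pattern of $\sigma_i$ applied to the neutral element $0_{\A_{d',h'}}=(t_0':t_1':t_1':t_2':t_3':t_4':t_2':t_4':t_3')$ yields precisely the displayed ratios: e.g.\ $P_1' = \sigma_1(0_{\A_{d',h'}})$ has $0$-th coordinate $t_1'$ and $1$-st coordinate $t_2'$-ish patterns, forcing $(\lambda_0:\lambda_1)=(x_{1,1}:x_{1,0})$, and similarly for the other three equations, which together over-determine and hence consistently fix $\lambda=(\lambda_0:\dots:\lambda_4)\in\PP^4$.

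The cleanest way to organize the verification is: (i) invoke Lemma~\ref{lem:3-isogeny-form} to get existence of \emph{some} valid $\lambda$; (ii) show that $\phi(R_1),\phi(R_2)$ together with $[-1]\phi(P_1),[-1]\phi(P_2)$ form a symplectic basis of $\A_{d',h'}[3]$ — this follows because $\phi$ is a $(3,3)$-isogeny, $R_i$ reduces to $Q_i\notin K_2 = \ker\phi$, the $R_i$ span a maximal isotropic subgroup of $\A_{d,h}[9]$ so their images span one of $\A_{d',h'}[3]$, and $\phi(P_i)$ lands in the complementary isotropic part (since $\langle P_1,P_2\rangle$ maps isomorphically to a maximal isotropic of the target); (iii) argue that demanding this particular basis be the \emph{canonical} one forces $\phi(R_i)=P_i'=\sigma_i(0_{\A_{d',h'}})$, and read off $\lambda$ by comparing coordinates, checking that the four ratios in the statement are mutually consistent with one single projective point (here the non-exceptionality hypothesis guarantees none of the relevant $x_{i,j}$ that appear as denominators vanish — indeed $x_{1,0},x_{1,6},x_{1,8},x_{2,0}$ are, up to the Fourier transform, built from the $\tilde t_i$ of Definition~\ref{def:exceptional}). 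Step (iii) is where I expect the real work to sit: one must be careful that the assignment $\phi(R_i)\mapsto P_i'$ is genuinely achievable (not just up to a symplectic change of basis that would spoil the diagonal form of $C_\lambda$) and that the four coordinate comparisons are enough to determine $\lambda$ uniquely rather than leaving a residual $\langle S_0,S_1,S_2\rangle$-ambiguity.

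\textbf{Main obstacle.} The delicate point is the interplay between the two freedoms: the choice of lift $(R_1,R_2)$ of $(Q_1,Q_2)$ to $9$-torsion, and the diagonal-scaling ambiguity in $F=C_\lambda$. Different isotropic lifts $\langle R_1,R_2\rangle$ differ by $K_2$-translations, which act on $M\circ\cube(R_i)$ by the $\tau$-scalings, i.e.\ exactly by $\langle S_0,S_1,S_2\rangle$-type diagonal matrices — so changing the lift changes $\lambda$ by one of those diagonal automorphisms, which is precisely a symplectic transformation of the codomain permuting canonical bases compatibly. I expect the proof to make this matching precise by an explicit symbolic computation (as the analogous lemmas in Sections~\ref{sec:hesse-form}--\ref{sec:three-torsion} are all proved "by a direct computation" using the defining equations of Theorem~\ref{thm:defining-equations-dim2}), verifying on the generic point that $C_\lambda\circ M\circ\cube$ with the stated $\lambda$ indeed lands on $\A_{d',h'}$ with $d',h'$ of the required shape and that $\phi(R_i)=\sigma_i(0_{\A_{d',h'}})$. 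The non-exceptional hypothesis is what makes the closed-form ratios well-defined; the exceptional cases, handled separately in Proposition~\ref{prop:isogeny-formula-exceptional}, are exactly those where some coordinate needed here vanishes and one must instead extract $\lambda$ from $R_1\pm R_2$.
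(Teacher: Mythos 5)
Your proposal is correct and follows essentially the same route as the paper's proof: existence of some valid $\lambda$ from Lemma~\ref{lem:3-isogeny-form}, the observation that $\phi(R_1)$ can only differ from $P_1'$ by an element of $\langle Q_1',Q_2'\rangle$ (the ambiguity you flag as the main obstacle), absorption of that residual ambiguity into $C_\lambda$ via the fact that the relevant basis changes are realized by diagonal scalings (Lemma~\ref{lem:symplectic-3torsion}), and finally reading off the ratios of the $\lambda_i$ by matching coordinates of $C_\lambda(M\circ\cube(R_i))$ against the permuted neutral element, with non-exceptionality guaranteeing $t_i'=\lambda_i\tilde t_i\neq 0$ for $i\geq 1$ so the ratios are well defined. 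The only cosmetic difference is that the paper pins down the form $\phi(R_1)=P_1'+aQ_1'+bQ_2'$, $\phi(R_2)=P_2'+bQ_1'+cQ_2'$ via an explicit Weil-pairing computation rather than via the choice-of-lift argument you sketch, but these are equivalent.
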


\begin{proof} 

Let $C_\lambda$ be as in the proof of Lemma \ref{lem:3-isogeny-form}, and recall the notation 
\[
\begin{tikzcd}
    \A_{d,h} \rar{\cube} \arrow[bend right = 70, "\phi_1"]{r}  \arrow[bend right = 70, "\phi_2"]{rr}  & A_1 \rar{M_{3,3}} & A_2 \rar{C_\lambda} & \A_{d',h'}.
\end{tikzcd}
\]
We show that the coordinates of $\lambda$ are determined by the $9$-torsion points  $R_1$ and $R_2$. 

Since $\langle R_1,R_2\rangle$ is a maximal isotropic subgroup lying above the kernel of the isogeny $\phi$, we have that 
\[
(\phi(R_1),\phi(R_2),\phi(P_1), \phi({P}_2))
\]
is a symplectic basis for $A_2[3]$. Moreover, it follows from the properties of the Weil pairing, and our choice of $9$-torsion points $R_1,R_2$ that
\[
e_3(\phi(R_1), \phi(P_1)) = e_3(P_1,Q_1)^{-1}, \quad  e_3(\phi(R_2), \phi(P_2)) = e_3(P_2,Q_2)^{-1}.
\]
We denote by $(P_1',P_2',Q_1',Q_2')$ the canonical symplectic basis of $\A_{d',h'}$. By construction of $C_\lambda$ (as in the proof of Lemma \ref{lem:3-isogeny-form}), we have that $\phi(P_1) = [-1]Q_1'$ and $\phi(P_2)$ = $[-1]Q_2'$. This implies
\begin{align*}
\phi(R_1) =&~P_1' + a Q_1' + b Q_2', \\
\phi(R_2) =&~P_2' + b Q_1' + c Q_2' 
\end{align*}
for some $a,b,c \in \{0,1,2\}$. It follows from Lemma \ref{lem:symplectic-3torsion} that a change of basis \[(\phi(R_1),\phi(R_2),\phi(P_1), \phi({P}_2)) \mapsto (P_1',P_2',[-1]Q_1',[-1]Q_2')\] is simply given by a scaling, hence it can be absorbed into the map $C_\lambda$, and we may assume that $\phi(R_1) = P_1'$ and $\phi(R_2) = P_2'$. 

Let $0_{\A_{d',h'}} = (t_0': \dots : t_3')$ be the neutral element on the codomain. Note that by the definition of the isogeny,  $t_i' = \lambda_i \cdot \tilde{t}_i$ for all $i$, where $\tilde{t}_i$ is as in Definition \ref{def:exceptional}. In particular $t_i \neq 0$ for all $i = 1, \dots, 4$.

The coordinates of the elements $P_1'$ and $P_2'$ are permutations of the coordinates of the neutral element. In order to recover the scaling vector $\lambda$, we denote
\[
    \phi_2(R_i) = (x_{i,0}:\dots: x_{i,8}) \quad \text{for } i =1,2,
\]
and use the equalities $C_\lambda(\phi_2(R_1)) = P_1'$, i.e.
\[
\left(\lambda_0 x_{1,0}: \lambda_1 x_{1,1} : \dots: \lambda_3 x_{1,8}\right) = 
 (t_1' : t_1' : t_0':  t_3': t_4' : t_2' :  t_4' : t_3' : t_2')
\]
and $C_\lambda(\phi_2(R_2)) = P_2'$, i.e.
\[
\left(\lambda_0 x_{2,0}: \lambda_1 x_{2,1} : \dots: \lambda_3 x_{2,8}\right)  = 
(t_2': t_3' : t_4' : t_2' : t_4' : t_3 ': t_0' : t_1' : t_1' ).
\]
From the first equality, we deduce $(\lambda_0: \lambda_1) = (x_{1,1}:x_{1,0})$, $(\lambda_2:\lambda_3) = (x_{1,4}:x_{1,6})$ and $(\lambda_3:\lambda_4) = (x_{1,5}:x_{1,8})$. From the second one, we deduce $(\lambda_{0}:\lambda_{2}) = (x_{2,3}:x_{2,0})$. 
\end{proof}

\subsection{Multiplication by $3$}

Multiplication by $3$ can be computed as the composition of the $3$-isogeny from Theorem \ref{thm:3-isogeny-dim2} with its dual. The dual of the isogeny in the theorem has kernel $K_2' = \langle Q_1',Q_2'\rangle$, where $K_1' + K_2' = \A_{d',h'}[3]$ is the canonical  decomposition of the $3$-torsion on the codomain. This means that the formulas from Theorem $\ref{thm:3-isogeny-dim2}$ could also be applied to compute the dual isogeny. The only subtlety is the computation of the scaling vector. A direct application of the theorem would require fixing additional $9$-torsion points $S_1,S_2$ lying above $P_1,P_2$ as well. In the following we show that this is not necessary.

\begin{lemma} \label{lem:dual-isogeny}
    Let $\phi: \A_{d,h} \to \A_{d',h'}$ be the $(3,3)$-isogeny from Theorem \ref{thm:3-isogeny-dim2}. Then the dual isogeny $\hat{\phi}: \A_{d',h'} \to \A_{d,h}$ is given by 
    \[
    \hat{\phi}: P \mapsto C_{\lambda'} \circ M_{3,3} \circ \cube (P)
    \]
    with $\lambda' = (t_0/a_0: t_1/a_1: t_2/a_2: t_3/a_3: t_4/a_4)$, where 
    \[
    (a_0:a_1:a_1:a_2:a_3:a_4:a_2:a_4:a_3) = M_{3,3} \circ \cube \circ C_{\lambda} \circ M_{3,3} \circ \cube(0_{\A_{d,h}}).
    \]
\end{lemma}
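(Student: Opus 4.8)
The plan is to identify the dual isogeny $\hat\phi$ with a map of the same shape $C_{\lambda'}\circ M_{3,3}\circ \cube$ acting on $\A_{d',h'}$, and then to pin down $\lambda'$ by tracking where the neutral element goes. The starting point is Lemma \ref{lem:3-isogeny-form} applied to $\A_{d',h'}$: there exists \emph{some} scaling vector $\mu\in\PP^4$ so that $\psi := C_\mu\circ M_{3,3}\circ\cube : \A_{d',h'}\to \A_{d'',h''}$ is a $(3,3)$-isogeny with kernel $K_2' = \langle Q_1',Q_2'\rangle$. By Theorem \ref{thm:3-isogeny-dim2}, $Q_1' = [-1]\phi(P_1)$ and $Q_2' = [-1]\phi(P_2)$, so $K_2' = \phi(K_1)$ where $K_1 = \langle P_1,P_2\rangle$. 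Since $K_1$ is a complement of $\ker\phi = K_2$ inside $\A_{d,h}[3]$, the subgroup $\phi(K_1)$ is exactly the kernel of the dual isogeny $\hat\phi$. Hence $\psi$ and $\hat\phi$ are two $(3,3)$-isogenies out of $\A_{d',h'}$ with the same kernel, so they agree up to post-composition with an isomorphism; because both land in a surface in Hesse form and the normalization of the $3$-torsion action is preserved (as in the proof of Lemma \ref{lem:3-isogeny-form}, using Lemma \ref{lem:twisted-3-torsion-action}), that isomorphism is itself a scaling $C_\nu$. Absorbing $C_\nu$ into $C_\mu$, we conclude $\hat\phi = C_{\lambda'}\circ M_{3,3}\circ\cube$ for a suitable $\lambda'$, and in particular the codomain of $\hat\phi$ is $\A_{d,h}$ itself — which is forced, since $\hat\phi\circ\phi = [3]$ is an endomorphism of $\A_{d,h}$.

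Having established the shape, the remaining task is the explicit value of $\lambda'$. Here I would use the defining relation $\hat\phi\circ\phi = [3]_{\A_{d,h}}$, and evaluate both sides at the neutral element $0_{\A_{d,h}}$. Multiplication by $3$ fixes the neutral element, so $(\hat\phi\circ\phi)(0_{\A_{d,h}}) = 0_{\A_{d,h}} = (t_0:t_1:t_1:t_2:t_3:t_4:t_2:t_4:t_3)$. On the other hand, $\phi(0_{\A_{d,h}}) = (C_\lambda\circ M_{3,3}\circ\cube)(0_{\A_{d,h}})$, and then applying $\hat\phi = C_{\lambda'}\circ M_{3,3}\circ\cube$ to that gives
\[
C_{\lambda'}\bigl(M_{3,3}\circ\cube\circ C_\lambda\circ M_{3,3}\circ\cube(0_{\A_{d,h}})\bigr).
\]
Writing $(a_0:a_1:a_1:a_2:a_3:a_4:a_2:a_4:a_3) := M_{3,3}\circ\cube\circ C_\lambda\circ M_{3,3}\circ\cube(0_{\A_{d,h}})$ — note this vector automatically has the symmetric form of Equation \ref{eq:neutral-element}, being the image under $M_{3,3}\circ\cube$ of a point, which commutes with multiplication by $[-1]$, of a point on an abelian surface in Hesse form, hence of a $2$-torsion point of the right parity — the equation $C_{\lambda'}(a) = 0_{\A_{d,h}}$ reads coordinate-wise $\lambda_i' a_i = t_i$ up to a global scalar, giving $\lambda' = (t_0/a_0 : \cdots : t_4/a_4)$ as claimed. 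One should check that the $a_i$ are all nonzero; this follows from Corollary \ref{cor:non-zero-coordinates} applied to the intermediate surface, or more directly from the fact that $\A_{d,h}$ is not a cusp so its neutral-element vector $t$, and likewise the pulled-back vector $a$, cannot be too degenerate (and the non-exceptional hypothesis of Theorem \ref{thm:3-isogeny-dim2}, which is in force, controls the vanishing introduced by $\cube$).

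The main obstacle is the bookkeeping around \emph{which} $9$-torsion points make Theorem \ref{thm:3-isogeny-dim2} directly applicable to $\hat\phi$, and showing one does not actually need to choose them: a naive application of the theorem to $\A_{d',h'}$ with kernel $K_2'$ would demand $9$-torsion points $S_1,S_2$ with $3S_i$ equal to the generators of $K_2'$, i.e. lying above $[-1]\phi(P_1), [-1]\phi(P_2)$. The point of the lemma is to bypass this. The clean way to do so is exactly the neutral-element computation above: it determines $\lambda'$ without reference to any auxiliary $9$-torsion data, because the constraint "$\hat\phi\circ\phi$ fixes $0_{\A_{d,h}}$" is one vector equation that, combined with the already-known shape $C_{\lambda'}\circ M_{3,3}\circ\cube$, has a unique projective solution for $\lambda'$. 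I would therefore structure the proof as: (i) shape of $\hat\phi$ via Lemma \ref{lem:3-isogeny-form} and kernel identification $\ker\hat\phi = \phi(K_1)$; (ii) the identity $\hat\phi\circ\phi=[3]$ evaluated at $0_{\A_{d,h}}$; (iii) solving for $\lambda'$ coordinate-wise and noting the denominators are nonzero. A minor subtlety to flag is the global scalar in $\hat\phi$ (it is only defined up to the choice of dual polarization normalization), but since everything is projective this is harmless and the stated formula is the natural representative.
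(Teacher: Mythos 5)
Your proposal is correct and follows essentially the same route as the paper: identify $\ker\hat\phi = \phi(K_1) = \langle Q_1',Q_2'\rangle = K_2'$, invoke the existence of a scaling making $C_{\lambda'}\circ M_{3,3}\circ\cube$ the isogeny with that kernel, and then pin down $\lambda'$ by evaluating $\hat\phi\circ\phi = [3]$ at the neutral element. The extra observations you add (the symmetric form of the vector $a$, non-vanishing of the denominators, and why no auxiliary $9$-torsion points above $K_2'$ are needed) are consistent with, and slightly more careful than, the paper's argument.
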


\begin{proof}
    First note that $\phi(P_1,P_2) = \langle Q_1',Q_2'\rangle$, hence this is the kernel of the dual isogeny $\hat{\phi}$. It follows from Theorem \ref{thm:3-isogeny-dim2} that there exists a vector $\lambda' \in \PP^4$ so that $\hat{\phi}$ is given by $\hat{\phi}: P \mapsto C_{\lambda'} \circ M_{3,3} \circ \cube (P)$. To compute $\lambda'$, it suffices to evaluate the multiplication by $3$ map at the neutral element $0_{\A_{d,h}}$. This provides us with the condition
   \[
   0_{\A_{d,h}} = C_{\lambda'} \circ M_{3,3} \circ \cube \circ C_{\lambda} \circ M_{3,3} \circ \cube(0_{\A_{d,h}})
    \]
    from which we deduce the coordinates of $\lambda'$.
\end{proof}

\subsection{Isogenies with arbitrary kernel}
In Theorem \ref{thm:3-isogeny-dim2}, we provide an explicit formula to compute $(3,3)$-isogenies with kernel given by $K_2 = \langle Q_1, Q_2 \rangle$, where $(P_1,P_2,Q_1,Q_2)$ is the canonical $3$-torsion basis. This formula can be used to compute isogenies with arbitrary kernel by precomposing it with a suitable symplectic transformation (see Subsection \ref{subsec:symplectic-trafos}). Here, we make this precise for kernels of the form $\langle P_1 + aQ_1 + bQ_2, P_2 + bQ_1 + cQ_2\rangle$ which appear naturally in our application. 

\begin{lemma} \label{lem:trafo-kernel} 
    Let $\A_{d,h}$ be a p.p. abelian surface in Hesse form, and $(P_1,P_2,Q_1,Q_2)$ the canonical basis of $\A_{d,h}[3]$. Consider the maximal isotropic subgroup $K = \langle P_1 + aQ_1 + bQ_2, P_2 + bQ_1 + cQ_2\rangle \subset \A_{d,h}[3]$. Then the transformation
    \(
    C_{\lambda(a,b,c)}: \A_{d,h}   \to \A_{d',h'},
    \)
    with
    \[
    \lambda = (1 : \omega^{2a}: \omega^{2c}: \omega^{2a+b+2c}: \omega^{2a+2b+2c})
    \]
    is a symplectic transformation, and it holds that 
    \[
    C_{\lambda}(K) = K_1',
    \]
    where $K_1' + K_2' = \A_{d',h'}[3]$ is the canonical decomposition.
\end{lemma}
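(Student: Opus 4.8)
The plan is to recognise the scaling vector $\lambda(a,b,c)$ as a product of the three basic scaling vectors $a_0,a_1,a_2$ of Proposition~\ref{prop:symplectic-transformation-1}, and then to read off the action of $C_\lambda$ on the canonical $3$-torsion basis by iterating Lemma~\ref{lem:symplectic-3torsion}. First I would observe, using $\star$ for coordinate-wise multiplication (Notation~\ref{not:scaling}) and reducing exponents of $\omega$ modulo $3$ (so that $4\equiv 1$), that $\lambda(a,b,c)$ is the coordinate-wise product of $2a$ copies of $a_0$, $2b$ copies of $a_1$ and $2c$ copies of $a_2$. Since composing coordinate-wise scalings of $\PP^8$ multiplies them coordinate-wise, this means $C_{\lambda(a,b,c)}=C_{a_0}^{2a}\circ C_{a_1}^{2b}\circ C_{a_2}^{2c}$, and the three factors commute. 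Iterating Proposition~\ref{prop:symplectic-transformation-1} along the chain of intermediate surfaces — each $C_{a_i}$ carries a Hesse form to a Hesse form — then shows that this composition is a symplectic transformation $\A_{d,h}\to\A_{d',h'}$ whose target is again in Hesse form; this is the first assertion.

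Next I would compute how $C_\lambda$ acts on the canonical bases. With the convention of Lemma~\ref{lem:symplectic-3torsion}, each $C_{a_i}$ sends the canonical basis of a Hesse form to the canonical basis of its image via a matrix of the shape $I+N_i$, where $N_0,N_1,N_2$ are supported only in the entries $(1,3),(1,4),(2,3),(2,4)$: concretely $N_0$ has a single entry $1$ in position $(1,3)$, $N_1$ has entries $1$ in positions $(1,4)$ and $(2,3)$, and $N_2$ has a single entry $1$ in position $(2,4)$. One checks at once that $N_iN_j=0$ for all $i,j$, so these matrices pairwise commute and $(I+N_i)^m=I+mN_i$. Applying Lemma~\ref{lem:symplectic-3torsion} successively along the chain $\A_{d,h}\to\cdots\to\A_{d',h'}$ then shows that $C_\lambda$ maps the canonical basis $(P_1,P_2,Q_1,Q_2)$ to the canonical basis $(P_1',P_2',Q_1',Q_2')$ of $\A_{d',h'}$ according to
\[
I+2aN_0+2bN_1+2cN_2=\begin{pmatrix}1&0&2a&2b\\0&1&2b&2c\\0&0&1&0\\0&0&0&1\end{pmatrix},
\]
that is, $C_\lambda(P_1)=P_1'+2aQ_1'+2bQ_2'$, $C_\lambda(P_2)=P_2'+2bQ_1'+2cQ_2'$, $C_\lambda(Q_1)=Q_1'$ and $C_\lambda(Q_2)=Q_2'$.

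Finally, since $C_\lambda$ is a symplectic transformation, hence a group homomorphism on the $3$-torsion, and $2\equiv-1\pmod 3$, evaluating it on the two given generators of $K$ yields $C_\lambda(P_1+aQ_1+bQ_2)=P_1'+2aQ_1'+2bQ_2'+aQ_1'+bQ_2'=P_1'$ and, in the same way, $C_\lambda(P_2+bQ_1+cQ_2)=P_2'$. Hence $C_\lambda(K)=\langle P_1',P_2'\rangle=K_1'$, the first summand of the canonical decomposition $\A_{d',h'}[3]=K_1'+K_2'$, which is the second assertion.

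I expect the only delicate point to be the bookkeeping in the middle step. One must fix a single consistent convention for the matrices of Lemma~\ref{lem:symplectic-3torsion} (which of the two bases is expressed in terms of the other, and in which order the intermediate Hesse forms are traversed) and track signs with care, because — unlike the mere $\FF_3$-span of the image — the precise off-diagonal coefficients matter for landing exactly on $K_1'$ rather than on some conjugate subgroup. The observation $N_iN_j=0$, so that the three basis-change matrices commute, makes the composition order immaterial and matches the fact that $C_\lambda$ is a single well-defined coordinate-wise scaling; once this is in place the remaining verification is routine modular arithmetic. It is also worth noting that the isotropy of $K$ assumed in the statement is exactly what forces its two generators to have the symmetric shape $P_1+aQ_1+bQ_2$, $P_2+bQ_1+cQ_2$ with the same coefficient $b$ appearing twice, via the pairing values $e_3(P_i,Q_j)$ recorded in Proposition~\ref{prop:3-torsion}; this symmetry is what makes the cancellation above work out.
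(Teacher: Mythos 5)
Your proposal is correct and follows essentially the same route as the paper: the paper's (very terse) proof likewise writes $C_\lambda=C_{a_0}^{-a}\circ C_{a_1}^{-b}\circ C_{a_2}^{-c}$ (your exponents $2a,2b,2c$ agree modulo $3$) and reads off the basis-change matrix $\bigl(\begin{smallmatrix}1&0&-a&-b\\0&1&-b&-c\\0&0&1&0\\0&0&0&1\end{smallmatrix}\bigr)$ from Proposition~\ref{prop:symplectic-transformation-1} and Lemma~\ref{lem:symplectic-3torsion}, which is exactly your $I+2aN_0+2bN_1+2cN_2$. Your additional observations (the nilpotency $N_iN_j=0$ making the order of composition immaterial, and the explicit evaluation on the generators of $K$) are correct elaborations of details the paper leaves implicit.
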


\begin{proof}
    First note that $C_\lambda = C_{a_0}^{-a} \circ C_{a_1}^{-b} \circ C_{a_2}^{-c}$ with $a_0, a_1,a_2$ as in Proposition \ref{prop:symplectic-transformation-1}, hence $C_\lambda$ defines a symplectic transformation. Furthermore, it follows from the same proposition that the transformation $C_\lambda$ is represented by the matrix
    \[
    \begin{pmatrix}
        1 & 0 & -a & -b\\
        0 & 1 & -b & -c\\
        0 & 0 & 1 & 0\\
        0 & 0 & 0 & 1
    \end{pmatrix}
    \]
    with respect to the canonical bases of $\A_{d,h}$ and $\A_{d',h'}$, respectively.
\end{proof}

Note that the theorem requires as input a kernel of the form $K_2$, whereas the transformation in Lemma \ref{lem:trafo-kernel} constructs a kernel of the form $K_1$. This can be achieved by simply applying a discrete Fourier transform and multiplying by $[-1]$ (cf. Lemma \ref{lem:DFT-3torsion}).  The overall procedure is summarized in the next corollary.

\begin{corollary} \label{cor:isogeny-arbitrary}
    Let $\A_{d,h}$ be a p.p. abelian surface in Hesse form, and $(P_1,P_2,Q_1,Q_2)$ the canonical basis of $\A_{d,h}[3]$. Given a maximal isotropic subgroup $K = \langle P_1 + aQ_1 + bQ_2, P_2 + bQ_1 + cQ_2\rangle \subset \A_{d,h}[3]$, and $9$-torsion points $R_1,R_2$ satisfying $3 \cdot R_1 = P_1 + aQ_1 + bQ_2$ and $3\cdot R_2 = P_2 + bQ_1 + cQ_2$. Further let $C_\lambda$ be as in Lemma  \ref{lem:trafo-kernel}. Then the $(3,3)$-isogeny with kernel $K$ can be computed using the formulas from Theorem  \ref{thm:3-isogeny-dim2} with input $S_K(\A_{d,h})$ and $S_K(R_1), S_K(R_2)$, where $S_K = [-1] \circ M_{3,3} \circ C_\lambda$.
\end{corollary}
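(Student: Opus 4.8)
The plan is to check that the transformed triple $(S_K(\A_{d,h}),\,S_K(R_1),\,S_K(R_2))$ meets the hypotheses of Theorem \ref{thm:3-isogeny-dim2}, with $S_K(K)$ equal to the canonical kernel $K_2$ of $S_K(\A_{d,h})$, and then obtain the isogeny with kernel $K$ by precomposing the isogeny produced by the theorem with $S_K$. Throughout, $\langle R_1,R_2\rangle$ is, as in Theorem \ref{thm:3-isogeny-dim2}, a maximal isotropic subgroup of $\A_{d,h}[9]$ (which it is: since $3R_1, 3R_2$ generate the rank-$2$ group $K$, the group $\langle R_1,R_2\rangle$ has order $81$).

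First I would verify that $S_K$ maps $\A_{d,h}$ to a surface in Hesse form with unchanged canonical basis. Writing $C_\lambda = C_{a_0}^{-a}\circ C_{a_1}^{-b}\circ C_{a_2}^{-c}$ as in the proof of Lemma \ref{lem:trafo-kernel}, Proposition \ref{prop:symplectic-transformation-1} shows $C_\lambda(\A_{d,h}) = \A_{d',h'}$ is in Hesse form, Proposition \ref{prop:dual-coordinates-dim2} shows $M_{3,3}(\A_{d',h'}) = \A_{d'',h''}$ is in Hesse form, and $[-1]$ — acting by the coordinate permutation of Subsection \ref{subsec:two-torsion} — is an automorphism of $\A_{d'',h''}$ fixing its neutral element. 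Hence $S_K(\A_{d,h}) = \A_{d'',h''}$, and its canonical basis $(P_1'',P_2'',Q_1'',Q_2'')$ (defined intrinsically via the neutral element, Proposition \ref{prop:3-torsion}) is the one of $\A_{d'',h''}$.

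Next I would track the kernel and the chosen $9$-torsion lifts. By Lemma \ref{lem:trafo-kernel}, $C_\lambda$ acts on the canonical bases by the displayed matrix; in particular $C_\lambda(P_1+aQ_1+bQ_2)=P_1'$ and $C_\lambda(P_2+bQ_1+cQ_2)=P_2'$, so $C_\lambda(K)=K_1'$, and since $3R_i$ is a generator of $K$, the points $C_\lambda(R_i)$ are $9$-torsion with $3\,C_\lambda(R_1)=P_1'$, $3\,C_\lambda(R_2)=P_2'$. Applying $M_{3,3}$ and then $[-1]$, Lemma \ref{lem:DFT-3torsion} gives $M_{3,3}(P_i')=-Q_i''$ and $M_{3,3}(Q_i')=P_i''$, whence $([-1]\circ M_{3,3})(K_1')=\langle Q_1'',Q_2''\rangle=K_2''$ and $3\,S_K(R_i)=[-1](-Q_i'')=Q_i''$. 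The factor $[-1]$ is present precisely to absorb this sign, so that $S_K(R_i)$ lies above $Q_i''$ rather than $-Q_i''$. Since $C_\lambda$ and $M_{3,3}$ are isomorphisms of principally polarized abelian surfaces and $[-1]$ preserves the Weil pairing and every subgroup, $\langle S_K(R_1),S_K(R_2)\rangle$ is again a maximal isotropic subgroup of $S_K(\A_{d,h})[9]$, now lying above $K_2''$, and $S_K(K)=K_2''$.

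Finally, assuming $S_K(\A_{d,h})$ is not exceptional (Definition \ref{def:exceptional}; otherwise one uses Proposition \ref{prop:isogeny-formula-exceptional} in place of Theorem \ref{thm:3-isogeny-dim2}), the theorem applies to $(S_K(\A_{d,h}),S_K(R_1),S_K(R_2))$ and yields a $(3,3)$-isogeny $\psi\colon S_K(\A_{d,h})\to B$ in Hesse form with $\ker\psi = K_2'' = S_K(K)$. Then $\phi:=\psi\circ S_K\colon \A_{d,h}\to B$ is a $(3,3)$-isogeny with $\ker\phi = S_K^{-1}(K_2'')=K$, as claimed. I expect the only delicate point to be the bookkeeping of canonical bases through the composition — in particular the sign from Lemma \ref{lem:DFT-3torsion} that forces the $[-1]$ factor — together with checking that the intermediate surface $S_K(\A_{d,h})$ stays in the non-exceptional range where Theorem \ref{thm:3-isogeny-dim2} gives the scaling vector directly.
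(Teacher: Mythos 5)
Your proposal is correct and follows exactly the route the paper intends: the corollary is stated without a formal proof, being justified by the sentence preceding it (Lemma \ref{lem:trafo-kernel} moves $K$ to $K_1'$, then $M_{3,3}$ and $[-1]$ convert this to $K_2''$ with the lifts $S_K(R_i)$ sitting above the canonical generators $Q_i''$, cf.\ Lemma \ref{lem:DFT-3torsion}), and your write-up simply fills in that bookkeeping, including the sign that forces the $[-1]$ factor. Your added caveat about the non-exceptionality of $S_K(\A_{d,h})$ is a reasonable (and strictly speaking necessary) hypothesis check that the paper leaves implicit.
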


\subsection{Non-generic isogenies}\label{subsec:nongeneric}

In Theorem \ref{thm:3-isogeny-dim2}, it is assumed that the domain of the isogeny $\A_{d,h}$ is not exceptional (cf. Definition \ref{def:exceptional}). This property was essential to derive  the concrete scalars for the map $C_\lambda$. 

Here, we provide an alternative method to derive the scalars for the map $C_{\lambda}$ which also works in the exceptional case unless the codomain admits extra  automorphisms of order $3$. Apart from the $9$-torsion points $R_1$ and $R_2$, these formulas further require the coordinates of $R_1 + R_2$ and $R_1 - R_2$ as input.

\begin{lemma} \label{lem:exceptional-points}
    Let $\A_{d,h}$ be an exceptional p.p. abelian surface. Let $\tilde{t}=(\tilde t_0: \dots: \tilde t_4) $ as in  Definition \ref{def:exceptional}. Then there exist at least two indices $i \in \{0, \dots, 4\}$  with $\tilde t_i \neq 0$.
\end{lemma}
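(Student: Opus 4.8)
The statement to prove is Lemma~\ref{lem:exceptional-points}: for an \emph{exceptional} p.p.\ abelian surface $\A_{d,h}$, at least two of the values $\tilde t_0, \dots, \tilde t_4$ are nonzero. The natural approach is to argue by contradiction: suppose at most one $\tilde t_i$ is nonzero. I will translate this hypothesis back into a statement about the neutral element $0_{\A_{d,h}} = (t_0:t_1:t_1:t_2:t_3:t_4:t_2:t_4:t_3)$ using the fact that $(\tilde t_0 : \dots : \tilde t_4) = M_{3,3}\circ\cube(0_{\A_{d,h}})$ restricted to the five distinct coordinates -- indeed, by Definition~\ref{def:exceptional} the $\tilde t_i$ are exactly the linear combinations of $t_0^3, \dots, t_4^3$ obtained by applying the Burkhardt automorphism matrix $M$ (Eq.~\ref{eq:automorphisms}) to the vector $(t_0^3 : \dots : t_4^3)$. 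In other words $\tilde t = M \cdot \cube(t)$ where $t = (t_0:\dots:t_4)$, and since $M$ is invertible (up to scalar $M\equiv M^{-1}$), the vector $\tilde t$ vanishing in four coordinates is equivalent to $\cube(t)$ being, up to scaling, the unique column of $M^{-1}$ not killed by those four coordinate functionals -- i.e.\ $\cube(t)$ is a scalar multiple of a single column of $M^{-1}$.

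First I would make the reduction precise. If $\tilde t_i = 0$ for all $i \neq i_0$ (and also allowing $\tilde t_0 = 0$ to be treated on the same footing), then $\tilde t$ is supported on a single coordinate, so $\cube(t) = M^{-1}\cdot \tilde t$ is a scalar multiple of the $i_0$-th column of $M^{-1}$. Each column of $M^{-1}$ (equivalently, up to scalar, of $M$) has entries in $\{1, 2, -1\} \subset \{1, \omega^?\}$-type patterns; concretely one reads off from Eq.~\ref{eq:automorphisms} that every column of $M$ has the shape $(1 : \pm1 : \pm1 : \pm1 : \pm1)$ up to sign and scaling. Hence $\cube(t_i)$ would be, up to a common scalar, a vector with all coordinates nonzero and of the form $(\alpha : \beta_1\alpha : \beta_2\alpha : \beta_3\alpha : \beta_4\alpha)$ with $\beta_j \in \{\pm 1\}$ — in particular \emph{all} of $t_0, \dots, t_4$ would be nonzero, and moreover their cubes would satisfy a rigid linear pattern.

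Next I would derive the contradiction. The key constraint not yet used is that $(t_0 : 2t_1 : 2t_2 : 2t_3 : 2t_4) \in H(\B)$ lies on the Hessian of the Burkhardt quartic (Eq.~\ref{eq:hessian-relation}), and more importantly that $h$ is a nontrivial kernel vector of $\mathrm{Hess}(F)$ evaluated at this point (Proposition~\ref{prop:t-to-h}), while $h$ must be a point on the Burkhardt quartic $\B$ (Eq.~\ref{eq:h-on-B}) defining a genuine abelian surface. Plugging the rigid form of $\cube(t)$ into the explicit formulas of Proposition~\ref{prop:t-to-h} (or directly into the linear conditions $c_0, \dots, c_4$) would force $h$ into one of a small finite list of configurations. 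I expect that each such $h$ turns out to be one of the $45$ singular points of $\B$ classified in Lemma~\ref{lem:singularities-burkhardt}, or else a point forcing $d = (1:0:0:0:0)$ as in the proof of Corollary~\ref{cor:non-zero-coordinates} — in either case contradicting the fact that $\A_{d,h}$ is a well-defined p.p.\ abelian surface (recall by Corollary~\ref{cor:non-zero-coordinates} that at least two $t_i$ must already be nonzero, so the truly degenerate sub-cases are excluded, and the remaining ones I would pin down by the explicit relations). Alternatively, and perhaps more cleanly, one computes $\tilde t = M\cdot\cube(t)$ and $d = \nabla(t) = $ (gradient formula, Eq.~\ref{eq:gradient-embedding}, applied to $(t_0:2t_1:\dots:2t_4)$) and observes that $\tilde t$ supported on one coordinate forces $d$ into a cusp of $\tilde\B$, again contradicting that $\A_{d,h}$ is nonsingular.

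The main obstacle is the case analysis in the final step: there are up to $5$ choices of which coordinate $\tilde t_{i_0}$ survives, and for each the sign pattern coming from the corresponding column of $M$ must be substituted into the Burkhardt quartic equation $F(h) = 0$ together with the kernel conditions $c_0, \dots, c_4$, and one must check that no admissible $h$ survives. I would handle this uniformly by exploiting that $\Aut(\B)$ acts transitively on both the columns of $M$ (these are permuted among themselves, up to scaling, by the $\langle S_0,S_1,S_2\rangle$ part and by $M$ itself) and on the singular locus $\Bs$ (Lemma~\ref{lem:singularities-burkhardt}), reducing all five cases to a single representative computation, which is then a short explicit verification — and, if needed, can be deferred to the accompanying source code as is done elsewhere in the paper (e.g.\ Lemma~\ref{lem:singularities-burkhardt}).
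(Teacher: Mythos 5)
Your reduction is fine as far as it goes: $\tilde t = M\cdot(t_0^3:\dots:t_4^3)$, so $\tilde t$ supported on a single coordinate forces $(t_0^3:\dots:t_4^3)$ to be proportional to a column of $M^{-1}$, all of whose entries are nonzero (though note the columns have entries in $\{1,2,-1\}$, not the $(1:\pm1:\dots:\pm1)$ shape you describe). The problem is the final step, which is where the entire content of the lemma lies and which you leave as an unverified expectation. Worse, one of the two contradictions you propose is not a contradiction at all: $h$ landing on one of the $45$ singular points of $\B$ (Lemma \ref{lem:singularities-burkhardt}) is perfectly compatible with $\A_{d,h}$ being a legitimate p.p.\ abelian surface in Hesse form --- that is exactly what happens for reducible surfaces (Subsection \ref{subsec:product}, Definition \ref{def:product-structure}), and the product surface is itself exceptional with exactly two nonzero $\tilde t_i$. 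So ``no admissible $h$ survives'' cannot be tested against the criterion $h\notin\Bs$; you would need a usable characterization of which points of $H(\B)$ actually arise as neutral elements of (possibly reducible) surfaces in Hesse form, which the paper does not supply and your sketch does not establish. The proposed symmetry reduction to a single representative case is also unjustified: coordinate-wise cubing kills the $\langle S_0,S_1,S_2\rangle$-action and does not commute with $M_{3,3}$, so it is not clear that $\Aut(\B)$ permutes your five bad configurations in any useful way.

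The paper's proof sidesteps all of this with one observation you missed: by Lemma \ref{lem:3-isogeny-form}, the vector $(\tilde t_0:\dots:\tilde t_4)$ is, up to the coordinate-wise scaling by $\lambda$ (with all $\lambda_i\neq 0$), the coordinate vector $t'$ of the neutral element of the isogeny codomain $\A_{d',h'}$, which is again a surface in Hesse form. Applying Corollary \ref{cor:non-zero-coordinates} to the \emph{codomain} rather than the domain immediately gives two nonzero $t_i'$, hence two nonzero $\tilde t_i$. You invoke Corollary \ref{cor:non-zero-coordinates} only on the domain, where it says nothing about $\tilde t$.
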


\begin{proof}
	Let $\A_{d,h} \to \A_{d',h'}$ be an isogeny as in Lemma \ref{lem:3-isogeny-form}, and denote by $t' = (t_0 :\dots :t_4')$, the coordinates defining the neutral element of the codomain. Then $t_i' = \lambda_i \cdot \tilde{t}_i$ for each $i \in \{0, \dots, 4\}$, where $\lambda = (\lambda_0: \dots :\lambda_4)$ is the scaling vector defining $C_\lambda$. The statement now follows from Corollary \ref{cor:non-zero-coordinates} which states that there are at least two indices $i \neq j \in \{0,\dots, 4\}$ so that $t_i', t_j'$ are nonzero.
\end{proof}

\begin{proposition} \label{prop:isogeny-formula-exceptional}
    Let $\A_{d,h}$ be a p.p. abelian surface in Hesse form. Denote $(P_1,P_2,Q_1,Q_2)$ for the canonical $3$-torsion basis, and let 
    \(
    \langle R_1,R_2\rangle \subset \A_{d,h}[9]
    \) be a maximal isotropic subgroup with $3 \cdot R_1  =  Q_1$, $3 \cdot R_2  =  Q_2$  and $R_3 = R_1 + R_2$, $R_4 = R_1 - R_2$. %We denote
    %\[
    %(x_{i,0}:\dots: x_{i,8}) = M \circ \cube(R_i) \quad \text{for } i =1,2,3,4.
    %\]
    
    Further, let $\tilde{t} = (\tilde{t}_0: \dots: \tilde{t}_4)$ as in Definition \ref{def:exceptional}. If there are two indices $i \neq j >0$ with $\tilde{t}_i, \tilde{t}_j \neq 0$, then there exists a unique $\lambda \in \PP^4$ so that 
    \[
    \phi: \A_{d,h} \to \A_{d',h'}, \quad P \mapsto C_\lambda \circ M \circ \cube(P)
    \]
    defines a $(3,3)$-isogeny with kernel $K_2 = \langle Q_1, Q_2\rangle$, and the canonical symplectic basis for $\A_{d',h'}[3]$ is given by  $(P_1',P_2',Q_1',Q_2')$, where $P_1' = \phi(R_1), \, P_2' = \phi(R_2)$ and $Q_1' = [-1]\phi(P_1), \, Q_2' = [-1]\phi(P_2)$. 
    
    Moreover, the coordinates of $\lambda$ can be read off from the $9$-torsion points $R_1,R_2,R_3,R_4$.
\end{proposition}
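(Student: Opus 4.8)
The plan is to mirror the argument of Theorem \ref{thm:3-isogeny-dim2}, but replace the four coordinate-ratio equations—which break down when some $\tilde t_i$ vanishes—by a linear-algebra extraction of $\lambda$ from \emph{all} the data $R_1, R_2, R_3 = R_1+R_2, R_4 = R_1-R_2$. As in the proof of Lemma \ref{lem:3-isogeny-form}, we know abstractly that the codomain $A_2$ of $C_\lambda \circ M_{3,3} \circ \cube$ is a twist $C_\lambda(\A_{d',h'})$ of an abelian surface in Hesse form, with $C_\lambda$ diagonal and symmetric with respect to $[-1]$; the only unknown is the scaling vector $\lambda = (\lambda_0:\dots:\lambda_4) \in \PP^4$. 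Normalizing so that $\phi(R_1) = P_1'$, $\phi(R_2) = P_2'$, $\phi(P_i) = [-1]Q_i'$ exactly as in the proof of Theorem \ref{thm:3-isogeny-dim2} (the change-of-basis ambiguity is again absorbed into $C_\lambda$ via Lemma \ref{lem:symplectic-3torsion}), we get that $\phi(R_1)$, $\phi(R_2)$, $\phi(R_3)$, $\phi(R_4)$ are, respectively, the images under $C_\lambda$ of the known points $\phi_2(R_1), \phi_2(R_2), \phi_2(R_3), \phi_2(R_4) \in A_2$; and they must equal certain points of $\A_{d',h'}$ whose coordinates are prescribed permutations-with-signs of the codomain neutral element $0_{\A_{d',h'}} = (t_0':\dots:t_4')$, with $t_i' = \lambda_i \tilde t_i$.

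The key step is to turn these into a \emph{linear} system for $\lambda$ that is solvable whenever two of the $\tilde t_i$ (with positive index) are nonzero. Concretely: $P_1' = \phi(R_1)$ has coordinate pattern $(t_1':t_1':t_0':t_3':t_4':t_2':t_4':t_3':t_2')$ and $P_2' = \phi(R_2)$ has pattern $(t_2':t_3':t_4':t_2':t_4':t_3':t_0':t_1':t_1')$ — these already pin down all ratios $\lambda_i:\lambda_j$ for which both $\tilde t_i,\tilde t_j$ are nonzero, via the appropriate entry of $\phi_2(R_1)$ or $\phi_2(R_2)$. The role of $R_3$ and $R_4$ is to connect the $\lambda_i$ across zero entries: since $R_3, R_4$ lie above $Q_1 \pm Q_2$, their images $\phi(R_3) = P_1' + P_2' + (\text{terms in }Q_1',Q_2')$ and $\phi(R_4) = P_1' - P_2' + (\dots)$ have coordinates that are \emph{sums} of products of pairs of neutral-element coordinates of $\A_{d',h'}$ — so each coordinate of $\phi_2(R_3)$ (resp. $\phi_2(R_4)$), after multiplication by the relevant $\lambda_i$, equals a bilinear expression in $(t_0':\dots:t_4') = (\lambda_0\tilde t_0:\dots:\lambda_4\tilde t_4)$. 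Comparing coordinates then yields additional linear relations among the $\lambda_i$ that are nontrivial precisely when the relevant $\tilde t$'s are nonzero. One checks (by the same symbolic computation that underlies Theorem \ref{thm:3-isogeny-dim2}, now carried out with the codomain group-law/addition formulas) that the resulting system has rank $4$, hence a unique solution in $\PP^4$, under the hypothesis that some two $\tilde t_i, \tilde t_j$ with $i\neq j>0$ are nonzero; Lemma \ref{lem:exceptional-points} guarantees at least two nonzero $\tilde t$'s overall, and the extra hypothesis here rules out the bad configuration where the only surviving nonzero entry beyond index $0$ is unique (which is the case where the codomain acquires an order-$3$ automorphism and the normalization of $(P_1',P_2')$ ceases to determine $\lambda$).

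The main obstacle is establishing solvability and uniqueness of the linear system in the degenerate strata — i.e. showing that the relations coming from $R_3, R_4$ genuinely glue the $\lambda_i$ together across the vanishing coordinates, and that no further degeneracy occurs beyond the excluded "extra automorphism" case. I expect this to require writing down explicitly which coordinate of $\phi_2(R_3)$ (resp. $\phi_2(R_4)$) to compare, and verifying that the corresponding bilinear identity in the $t_i'$ is not identically satisfied — this is the analogue of checking that the four ratio equations in Theorem \ref{thm:3-isogeny-dim2} are consistent and complete, but now one must track which equations survive when a given $\tilde t_i = 0$. In practice this is a finite case analysis over which subset of $\{\tilde t_0,\dots,\tilde t_4\}$ vanishes, and it can be discharged by the symbolic computation in the accompanying source code \cite{source_code}; the uniqueness of $\lambda$ then follows from the rank-$4$ claim, and the stated properties of the canonical basis $(P_1',P_2',Q_1',Q_2')$ on the codomain are immediate from the normalization, exactly as in the proof of Theorem \ref{thm:3-isogeny-dim2}.
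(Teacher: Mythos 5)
Your high-level skeleton matches the paper's: decompose as $\A_{d,h}\xrightarrow{\phi_2}A_2\xrightarrow{C_\lambda}\A_{d',h'}$, normalize via Lemma \ref{lem:symplectic-3torsion} so that $\phi(R_1)=P_1'$ and $\phi(R_2)=P_2'$, and then use the four conditions $C_\lambda(\phi_2(R_i))=\phi(R_i)$ for $i=1,2,3,4$ to pin down $\lambda$. But the mechanism you propose for extracting information from $R_3$ and $R_4$ is wrong, and this is precisely the step where the proposition differs from Theorem \ref{thm:3-isogeny-dim2}. You assert that the coordinates of $\phi(R_3)=P_1'+P_2'$ and $\phi(R_4)=P_1'-P_2'$ are ``sums of products of pairs of neutral-element coordinates,'' i.e.\ bilinear expressions in $(t_0':\dots:t_4')$, obtained from addition formulas. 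This is both unnecessary and inconsistent with your own conclusion: since $t_j'=\lambda_j\tilde t_j$, a bilinear right-hand side would make the resulting conditions \emph{quadratic} in $\lambda$, not linear, so your claimed rank-$4$ linear system does not exist as described. The fact that makes the argument work is that $P_1'+P_2'$ and $P_1'-P_2'$ lie in $K_1'=\langle P_1',P_2'\rangle$, and translation by elements of $K_1'$ acts by pure coordinate permutations ($\sigma_1,\sigma_2$ of Eq.~\ref{eq:action-by-3-torsion}); hence $P_3'$ and $P_4'$ are, like $P_1'$ and $P_2'$, coordinate \emph{permutations} of the neutral element, each coordinate being exactly one $t_k'$. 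Each of the four equalities then yields, for every index $k>0$ with $t_k'\neq 0$, one ratio $\lambda_a:\lambda_b=x_{i,n}:x_{i,m}$ read off from the two positions carrying $t_k'$, and one checks that the relations coming from two distinct nonzero indices $k\neq l>0$ together determine all of $\lambda_0:\dots:\lambda_4$.

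A secondary inaccuracy: you claim the patterns of $P_1'$ and $P_2'$ alone ``pin down all ratios $\lambda_i:\lambda_j$ for which both $\tilde t_i,\tilde t_j$ are nonzero.'' They do not; which pair $(\lambda_a,\lambda_b)$ gets related is dictated by the permutation (which two coordinate positions carry the same $t_k'$), not by which $\tilde t$'s are nonzero. For example, with only $\tilde t_1,\tilde t_2\neq 0$, the conditions from $R_1$ and $R_2$ determine $\lambda_0:\lambda_1:\lambda_2$ and $\lambda_3:\lambda_4$ separately but give no link between these two blocks --- that missing link is exactly what the conditions from $R_3$ and $R_4$ supply (e.g.\ $(\lambda_2:\lambda_4)=(x_{3,7}:x_{3,6})$ when $t_1'\neq0$). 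Your intuition that $R_3,R_4$ ``glue the $\lambda_i$ across the zero entries'' is correct, but without the permutation observation the proof does not go through, so I would count this as a genuine gap rather than a cosmetic one.
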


\begin{proof}
	We use the same notation as in the proof of Theorem \ref{thm:3-isogeny-dim2}. That is we decompose the isogeny $A \xrightarrow{\phi_2} A_2 \xrightarrow{C_\lambda} A_{d',h'}$ for some $\lambda \in \PP^4$. Let $(P_1',P_2',Q_1',Q_2')$ be the canonical symplectic basis of $\A_{d',h'}$. With the same arguments as in the proof of the theorem, we may assume that $\phi(R_1) = P_1'$ and $\phi(R_2)=P_2'$, hence $\phi(R_3) = P_1'+P_2' =: P_3'$ and $\phi(R_4) = P_1'-P_2' =:P_4'$.

	We denote
	\[
	\phi_2(R_i) = (x_{i,0}:\dots: x_{i,8}) \quad \text{for } i =1,2,3,4.
	\]
	The condition $C_\lambda(\phi_2(R_i)) = P_i'$ for $i = 1,2,3,4$ provides us with four equalities. 
	
	\begin{align*}
		\left(\lambda_0 x_{1,0}: \lambda_1 x_{1,1} : \dots: \lambda_3 x_{1,8}\right) = &~
		(t_1' : t_1' : t_0':  t_3': t_4' : t_2' :  t_4' : t_3' : t_2'),\\
		\left(\lambda_0 x_{2,0}: \lambda_1 x_{2,1} : \dots: \lambda_3 x_{2,8}\right)  = &~
		(t_2': t_3' : t_4' : t_2' : t_4' : t_3 ': t_0' : t_1' : t_1' ),\\
		\left(\lambda_0 x_{3,0}: \lambda_1 x_{3,1} : \dots: \lambda_3 x_{3,8}\right)  = &~
		(t_3': t_4' : t_2' : t_4' : t_3' : t_2 ': t_1' : t_1' : t_0' ),\\
		\left(\lambda_0 x_{4,0}: \lambda_1 x_{4,1} : \dots: \lambda_3 x_{4,8}\right)  = &~
		(t_4': t_3' : t_2' : t_1' : t_1' : t_0 ': t_3' : t_4' : t_2' ).
	\end{align*}
	
	For each $i$, the condition $\tilde{t_i} \neq0$ is equivalent to $t_i' \neq 0$ (cf. the proof of Lemma \ref{lem:exceptional-points}). Now if $t_i'\neq 0$ for some $i>0$, then we can deduce one relation on the coordinates of $\lambda$ from each of the above equalities.\\
	Explicitly, if $t_1' \neq 0$, then
	\[
	(\lambda_0:\lambda_1) = (x_{1,1}:x_{1,0}), ~~
	(\lambda_4:\lambda_3) = (x_{2,8}:x_{2,7}), ~~
	(\lambda_2:\lambda_4) = (x_{3,7}:x_{3,6}), ~~
	(\lambda_2:\lambda_3) = (x_{4,4} : x_{4,3}).
	\]
	If $t_2' \neq 0$, then
	\[
	(\lambda_4:\lambda_3) = (x_{1,8}:x_{1,5}), ~~
	(\lambda_0:\lambda_2) = (x_{2,3}:x_{2,0}), ~~
	(\lambda_1:\lambda_4) = (x_{3,5}:x_{3,2}), ~~
	(\lambda_1:\lambda_3) = (x_{4,8} : x_{4,2}).
	\]
	If $t_3' \neq 0$, then
	\[
	(\lambda_2:\lambda_4) = (x_{1,7}:x_{1,3}), ~~
	(\lambda_1:\lambda_4) = (x_{2,5}:x_{2,1}), ~~
	(\lambda_0:\lambda_3) = (x_{3,4}:x_{3,0}), ~~
	(\lambda_1:\lambda_2) = (x_{4,6} : x_{4,1}).
	\]
	If $t_4' \neq 0$, then
	\[
	(\lambda_3:\lambda_2) = (x_{1,6}:x_{1,4}), ~~
	(\lambda_1:\lambda_3) = (x_{2,4}:x_{2,2}), ~~
	(\lambda_1:\lambda_2) = (x_{3,3}:x_{3,1}), ~~
	(\lambda_0:\lambda_4) = (x_{4,7} : x_{4,0}).
	\]
	
	Note that in all of the above cases, there is redundancy in the derived relations. In particular, using only one set of relations obtained for some $t_i' \neq 0$ is not enough to recover the coordinate vector $\lambda \in \PP^4$. However, two sets of relations combined for some $t_i',t_j' \neq 0$ with $i\neq j > 0$ is enough to recover $\lambda$. 
\end{proof}

\begin{remark}
	The only case that is missing in  Proposition \ref{prop:isogeny-formula-exceptional} is the case when $\tilde{t}$ has exactly two non-zero indices, one of which is $i=0$. Indeed, in this case the statement of the proposition does not hold. The codomain admits a special automorphism of order $3$, and as a result the scaling $C_\lambda$ is not uniquely defined by the images of the $9$-torsion points.
	
	To see this, let $0_{\A_{d',h'}} = (t_0': \dots : t_3') $ be the neutral element of the codomain. For simplicity assume $t_0',t_1' \neq0$ and $t_2' = t_3' =t_4'=0$. The extra automorphism is similar for the remaining cases.
	
	We note that  $t_0'^3 \neq -2t_1'^3, t_1'^3$, otherwise one can check that $\A_{d',h'}$ has a singularity at $0_{\A_{d',h'}}$. The conditions on $t_i'$ then imply
	\[
	d' = (t_0'^3+2t_1'^3 : 3t_0't_1'^2:0:0:0), \quad h' = (0:0:h_2':h_3':h_4')
	\] 
	for some $h_2',h_3',h_4'$. Now consider the map $C_\lambda$ with $\lambda = (1:1:\omega:\omega:\omega)$ (cf. Proposition \ref{prop:symplectic-transformation-1}).  This defines an automorphism of order $3$ of $\A_{d,h}$.
\end{remark}

    \section{Example} \label{sec:example}

To illustrate the main theorem and its applications, we provide a detailed example. In this example, we compute a $(9,9)$-isogeny between a product of elliptic curves. The isogeny computation is decomposed into a $(3,3)$-gluing and a $(3,3)$-splitting. The example illustrates the most important subtleties that can occur in the computation of $(3,3)$-isogenies. The example does not contain a generic $(3,3)$-isogeny computation, but we assure the reader that this is the most straightforward case. More details, and further examples can be found in our GitHub repository \cite{source_code}.

\subsection{Setting} We work over the finite field $\FF_{p^2}$  with $p = 269$. This field contains a primitive third root of unity $\omega \in \FF_{p^2}$. Let $E_1: y^2 = x^3 + 1$ and $E_2: y^2= x^3 + 66x + 134$ over $\FF_{p^2} \setminus \FF_p$. The two curves are connected by a $5$-isogeny $\phi: E_1 \to E_2$ with kernel $\ker(\phi) = \langle (8,128)\rangle$. And in particular, they fit into the following commutative diagram:
\[\begin{tikzcd}
    E_1 \arrow[r,"\phi"] \arrow[d, "{[2]}"] & E_2 \arrow[d, "{[2]}"]\\
    E_1 \arrow[r,"\phi"] & E_2
\end{tikzcd}\]
By Kani's lemma \cite{kani1997number}, this diagram induces a $(9,9)$-isogeny $\Phi: E_1 \times E_2 \to E_1 \times E_2$ with kernel
\[
\ker(\Phi) = \{ ([2]P, \phi(P)) \mid P \in E_1[9] \}.
\] 

This is the isogeny we wish to compute. Recall that our algorithm for the computation of $(3,3)$-isogenies requires certain $9$-torsion points as auxiliary input. Hence, we first fix a basis 
\[
E_1[27] = \left\langle P,Q \right\rangle \quad \text{with } P = (252,71),\, Q = (7\omega,13\omega+141),
\]
and fix the following notation
\[
P_1 = [2] \cdot P, \; Q_1 = [2] \cdot Q \in  E_1[27], \quad P_2 = \phi(Q), \; Q_2 = \phi(P) \in E_2[27].
\]
In particular, in this notation, $\ker(\Phi) = [3] \cdot \langle (P_1,Q_2), (Q_1,P_2) \rangle$.

\subsection{Transformation to Hesse form}
To apply our algorithm, it is necessary to translate the input to elliptic curves in Hesse form. This transformation is linear, and it is determined by specifying the image of the $3$-torsion elements. Concretely, %one can check that $E_1$ and $E_2$ are respectively isomorphic to 
%\[
%H_{(-2:1)}: X^3 + Y^3 + Z^3 = -6 XYZ, \quad H_{(18\omega^2:1)}:X^3+Y^3+Z^3 = 18\omega^2 XYZ,
%\]
%and
the transformations are given by
\[
f_1: E_1 \xrightarrow{\begin{pmatrix} 1 & 0 & 231 \omega \\
1 & 19 \omega & 19 \omega \\
1 & 250 \omega & 19 \omega
 \end{pmatrix}} \H_{(-2:1)},
 \quad 
f_2:  E_2 \xrightarrow{\begin{pmatrix} 1 & 0 & 179 \\
-9\omega^2 & 9-9\omega & 49 \omega^2 \\
-9\omega^2 & -9-9\omega& 49 \omega^2
 \end{pmatrix}} \H_{(-18\omega^2:1)}.
\]
These transformations are chosen so that
\begin{align*}
&f_1([9]P_1) = (-1:1:0), \; f_1([9]Q_1) = (0:-\omega^2:1),\\
&f_2([9]P_2) = (-1:1:0), \; f_2([9]Q_2) = (0:-\omega^2:1).
\end{align*}
In our implementation in SageMath, an elliptic curve in Hesse form is constructed by calling
\texttt{H1 = EllipticCurveHessianForm(E1, basis=[9*P1,9*Q1])}, 
where \texttt{E1} is the elliptic curve in Weierstra\ss\ form, and \texttt{P1}, \texttt{Q1} are the points $P_1,Q_1$ defined above. The matrix describing the corresponding transformation can be accessed as \texttt{H1.\_trafo}. Similarly, the second elliptic curve in Hesse form is constructed by \texttt{H2 = EllipticCurveHessianForm(E2, basis=[9*P2,9*Q2])}. 

As described in Subsection \ref{subsec:product}, the image of the Segre embedding of two elliptic curves in Hesse form provides us with an abelian surface in Hesse form. Applying Lemma \ref{lem:product-equation} we find
\[
\iota_S\left(\H_{(18\omega^2:1)} \times \H_{(-2:1)} \right) = \A_{d,h},\quad \text{with } d = (-36\omega^2: 18\omega^2: -2: 1: 1), \; h =  (0: 0: 0: -1: 1).
\]
In the implementation, the surface $\A_{d,h}$ is constructed as \texttt{A0 = AbelianSurfaceHessianForm([H2,H1])}.

\subsection{General strategy}
With the above information as input, the isogeny $\Phi$ can now be computed using the function \texttt{compute\_isogeny\_chain}. The intermediate computations performed by this function can be summarized as follows:
\[
\begin{tikzcd}
    \H_{(18\omega^2:1)} \times \H_{(-2:1)} \arrow[r,"\iota_S"] &\A_{d,h} \arrow[d, swap, "S_K"] & \A_{d',h'} \arrow[d,swap, "S_K{'}"]& \A_{d'',h''} \arrow[d, swap, "S_{K''}"]&\\
    & \A_{\tilde d, \tilde h} \arrow[ur, "\Phi_1"] &  \A_{\tilde d', \tilde h'} \arrow[ur, "\Phi_2"] &  \A_{\tilde d'', \tilde h''} \arrow[r, "\iota_S^{-1}"]&  \H_{(-2:1)} \times \H_{(18\omega^2:1)}.
\end{tikzcd}
\]
The maps $S_K,S_K',S_K''$ are symplectic transformations and $\Phi_1, \Phi_2$ are $(3,3)$-isogenies computed using the formulas from Theorem \ref{thm:3-isogeny-dim2}. Below, we provide more details on the computations. We note that for a general isogeny chain computation in the framework, all intermediate symplectic transformations are of the form $[-1] \circ M_{3,3}$ (this is the case for $S_{K'}$ here), whereas in the first and last step, the symplectic transformation depends on the setup (this is the case for $S_K$ and $S_{K''}$ here).

\subsection{Gluing isogeny}
As per Corollary \ref{cor:product-basis}, the canonical basis of $\A_{d,h}[3]$ is given by 
\[
\left(P_1^0,P_2^0, Q_1^0,Q_2^0\right) = \left([9] \iota_S(0, P_1),\; [9] \iota_S(P_2,0), \; [9] \iota_S(0,Q_1), \; [9] \iota_S(Q_2,0) \right),
\]
and $\ker(\Phi)[3] = \langle Q_1^0 + P_2^0, Q_2^0+P_1^0\rangle$. In order to apply Theorem \ref{thm:3-isogeny-dim2}, we need to apply a symplectic transformation that transforms $K:=\ker(\Phi)[3]$ to a group of the form $K_2$. As described in Corollary \ref{cor:isogeny-arbitrary}, this is achieved by the transformation \[
S_K := [-1] \circ M_{3,3} \circ \C_\lambda,\quad \text{with } \lambda = (1:1:1:\omega:\omega^2).
\] Note that the value for $\lambda$ is deduced from Lemma \ref{lem:trafo-kernel}. Explicitly, the coefficients defining the codomain surface of $S_K: \A_{d,h} \to \A_{\tilde{d},\tilde{h}}$ are given by
\[
 \tilde d = (263: 54\omega + 51 : 3 : 57\omega + 57 : 51\omega + 54),\; \tilde h =  (-\omega: \omega: \omega: \omega^2: 1).
\]

With the notation of Eq. \ref{eq:neutral-element}, the neutral element is defined by the vector $t= (-2\omega:\omega:\omega:\omega^2:1)$, and one can check that $\A_{\tilde d, \tilde h}$ is not exceptional. 
Now we can apply the main theorem to compute the $(3,3)$-isogeny
\[
\Phi_1: \A_{\tilde d, \tilde h} \to \A_{d',h'}, \quad P \mapsto C_{\lambda'} \circ M \circ \cube(P)
\]
with
\[
\lambda' = (88: 170: 181\omega + 5: 88\omega + 93: 1),
\]
and 
\[
d' = (107: 144 : 169\omega + 200: 100\omega + 31:1),\quad h' = (104: 76: 203\omega + 167: 66\omega + 233: 1).
\]
One can check that $h'$ is a {\em nonsingular} point on the Burkhardt quartic. This means that $\A_{d',h'}$ is irreducible, and the $(3,3)$-isogeny $\Phi_1$ is indeed a gluing. 

\subsection{Splitting isogeny}
By Theorem \ref{thm:3-isogeny-dim2}, we have that $K':=\Phi_1(S_K(\ker(\Phi))[3] = K_1'$, where $\A_{d',h'} = K_1' + K_2'$ is the symplectic decomposition of the $3$-torsion. This is transformed into a kernel of the form $K_2$ by applying a discrete Fourier transform (Lemma \ref{lem:DFT-3torsion}). Moreover, a multiplication by $[-1]$ is necessary to transform the generators into the correct form. Denote $S_{K'} = [-1] \circ M_{3,3}$ and $\A_{\tilde d',\tilde h'}$ for the codomain of $S_{K'}$. Then, one may again apply Theorem \ref{thm:3-isogeny-dim2} to compute the splitting isogeny as
\[
\Phi_2: \A_{\tilde d',\tilde h'} \to \A_{d'',h''}, \quad P \mapsto  C_{\lambda'} \circ M \circ \cube(P),
\]
where 
\[
\lambda'' = (144\omega:  250\omega: 126\omega + 260: 143\omega + 134: 1)
\]
and 
\[d'' = (99\omega + 99: 85\omega + 85: 100\omega + 184: 169\omega + 84: 1)
 ,\quad h'' = (-\omega: \omega: \omega:\omega^2: 1)
.\]

As expected, $h''$ is a singular point of the Burkhardt quartic, hence $\Phi_2$ is indeed a $(3,3)$-splitting. In order to recover the two elliptic curves, one needs to apply another symplectic transformation to obtain a model equipped with the product polarization (Definition \ref{def:product-structure}). In our implementation, this transformation is computed using \texttt{product\_structure\_transformation}. This yields
\[
S_{K''}: \A_{d'',h''} \to \A_{\tilde d'',\tilde h''}, \quad P \mapsto C_{(1:1:1:\omega:\omega^2)} \circ M_{3,3} \circ C_{(1:1:1:\omega:\omega^2)} (P)
\]
with 
\[
\tilde d'' = (-36\omega^2: -2: 18 \omega^2: 1: 1)
,\quad \tilde h'' = (0:0:0:-1:1).
\]
And in particular,
\[
\A_{\tilde d'',\tilde h''} =  \iota_S\left(\H_{(-2:1)} \times \H_{(18\omega^2:1)} \right).
\]
Note that the codomain of the isogeny $\Phi$ was already clear from the construction in this example.

    \section{Generalization of the approach} \label{sec:generalizations}

The main ingredient for our new isogeny formula is Lemma \ref{lem:3-isogeny-form} which states that for some appropriate scaling vector $\lambda$, the map
\[
\phi: \A_{d,h} \to \A_{d',h'}, \quad P \mapsto C_\lambda \circ M \circ \cube(P)
\] 
defines a $(3,3)$-isogeny with kernel $K_2$. It is only natural to ask, whether this approach can be generalized to higher dimensional abelian varieties defined by a symmetric theta structure of level $3$, and whether it further generalizes to different levels.

\subsection{Going up in dimension}
To answer the first question, let $A$ be a p.p. abelian variety of dimension $g$. Consider a line bundle $L$ which determines the principal polarization of $A$. Then $L^{\otimes 3}$ is very ample by Lefschetz's Theorem; it defines an embedding $A \hookrightarrow \PP^{3^g-1}$ and one may choose an embedding so that the action of the three-torsion group is normalized. In analogy with our definition in dimension $2$, we say that $A$ is in {\em Hesse form}. Such an embedding is induced by a symmetric theta structure and the coordinate functions correspond to level-$3$ theta functions. In the following, we explain how the steps in the proof of Lemma \ref{lem:3-isogeny-form} generalize to arbitrary dimension. 

Similar as in Definition \ref{def:symplectic-decomposition}, one can define a canonical symplectic basis
\[
(P_1, \dots, P_g, Q_1,\dots, Q_g) 
\]
for $A[3]$ so that the action of the points $P_i$ is given by permutations of order $3$, and the action of $Q_i$ is given by scalings with a third root of unity (cf. \cite{mumford1966equations}). It follows that also in this general setting, the map defined by coordinate-wise cubing defines an isogeny $\cube:A \to A_1$ with kernel $K_2 = \langle Q_1, \dots, Q_g\rangle$. 

The discrete Fourier transform with respect to a third root of unity $\omega$ may be defined inductively:
\[
M_{3^g} =  \begin{pmatrix}
		M_{3^{g-1}} & M_{3^{g-1}} & M_{3^{g-1}}\\
		M_{3^{g-1}} & \omega M_{{3^{g-1}}} & \omega^2 M_{3^{g-1}}\\
		M_{3^{g-1}} & \omega^2M_{3^{g-1}} & \omega M_{3^{g-1}}
	\end{pmatrix} \quad \text{for } g>1.
\]

Up to reordering the generators of the basis $(P_1, \dots, P_g, Q_1,\dots, Q_g)$, one can then check that $M_{3^g} \cdot P_i = -Q_i$ and $M_{3^g}\cdot Q_i = P_i$ (as in Lemma \ref{lem:DFT-3torsion}). In particular, we see that on the codomain $A_2$ of 
\[
\phi_2 = (M_{3^g} \circ \cube): A \to A_2,
\]
the action of the points $\phi_2(P_i)$ is in canonical form. In the language of Section \ref{sec:twisted-surface}, one could say that $A_2$ is in twisted Hesse form. In particular, there exists a scaling $C_\lambda$ with $\lambda \in \PP^{(3^g-1)/2}$ to transform $A_2$ into  Hesse form. 

We expect that apart from certain edge cases, it will also be possible to determine the scaling vector $\lambda$ from a given maximal 9-isotropic subgroup $\langle R_1, \dots, R_g\rangle$ above the kernel of the isogeny. However, we did not work out the general formulas in detail. 

\begin{remark}
	In the preparation of this manuscript, we further made this idea explicit in the case of elliptic curves. Isogeny formulas for elliptic curves in Hesse form are well known, and we verified that our new approach yields the same result. Interestingly, this interpretation of $3$-isogenies led to the discovery of some new tripling formulas on the Kummer line of elliptic curves in {\em twisted} Hesse form. We presented these results at a cryptography conference, and the article is currently in submission to the proceedings of this conference \cite{hessiantripling}. 
\end{remark}

\subsection{Different levels $\ell$}

We point out that our ideas could also lead to new isogeny formulas for different levels $\ell$. Indeed, as explained in the introduction, the inspiration for our work comes from the level-$2$ theta setting, where particularly efficient $2$-isogeny formulas are known. In this case, the isogeny formulas follow from the classical duplication formulas of theta functions, and hold in arbitrary dimension $g$. 

\begin{remark} \label{rem:generalization-level}
	 To formulate a generalization of Lemma \ref{lem:3-isogeny-form} to arbitrary level $\ell>2$, we introduce the following notation:
	\begin{itemize}
		\item $\odot^\ell$:  coordinate-wise exponentiation with exponent $\ell$, i.e. $\odot^\ell(x) = (x_0^\ell : \dots : x_{3^g-1}^\ell)$.
		\item $M_{\ell^g}$: The discrete Fourier transform with respect to a primitive $\ell$-th root of unity and dimension $g$.
		\item $C_{\lambda}$ with $\lambda \in \PP^{{(\ell^{g}-1)}/2}$: coordinate-wise scaling, symmetric with respect to multiplication by $[-1]$. 
	\end{itemize}
	
	Now let $A$ be a p.p. abelian variety, and $\ell>2$ a prime. Further, let 
	\(
	\Theta_\ell: A \to \PP^{\ell^{g}-1}
	\)
	be the embedding induced by a symmetric theta structure of level $\ell$. 
	We claim that there exists a scaling vector $\lambda \in \PP^{{(\ell^{g}-1)}/2}$ so that the map
	\[
	\phi = (C_\lambda \circ M_{\ell^g} \circ \odot^\ell):  \Theta_\ell(A) \to {A}'
	\]
	defines an isogeny with maximal $\ell$-isotropic kernel; and the codomain ${A}'$ is induced by a symmetric theta structure $\Theta_\ell'$ of level $\ell$ as well.	We describe the idea briefly.
    
    First, one fixes a canonical symplectic basis  $(P_1, \dots, P_g, Q_1, \dots, Q_g)$ for $A[\ell]$. Similar as in the setting of abelian varieties in Hesse form, addition by a point $P_i$ acts by certain permutations of order $\ell$; and addition by a point $Q_i$ acts by scaling with $\ell$-th roots of unity. In particular, the map $\odot^\ell$ defines an isogeny with kernel $K_2 = \langle Q_1, \dots, Q_g\rangle$. And as in the proof of Lemma \ref{lem:3-isogeny-form}, there must exist a linear transformation so that the codomain is in the correct form. To describe this transformation explicitly, the key is to observe that the images of the points $P_1, \dots, P_g $ under $\phi_2 := M_{\ell^g}\circ \odot^{\ell}$ act by scalings with $\ell$-th roots of unity on the points of the codomain of $\phi_2$.
\end{remark}

\begin{remark}
    Note that the number of coordinates to represent a point on an abelian variety grows with the level $\ell$. It would be interesting to see if the above method could still lead to more efficient isogeny formulas. Even in the case of elliptic curves, this could lead to new insights on isogenies. The next easiest case to consider is an elliptic curve defined as the intersection of two quadrics in $\PP^3$. With a suitable choice of coordinates, this corresponds to a level-$4$ theta structure (cf. \cite[\S 5, Case (c)]{mumford1966equations}). This model should therefore be suitable for the computation of $4$-isogenies.
\end{remark}
	
	%\bibliographystyle{plain}	
	%\bibliography{ref.bib}

\end{document}